\date{October 2, 2009}
\newtheorem{theorem}{Theorem}[section]
\newtheorem{lemma}[theorem]{Lemma}
\newtheorem{proposition}[theorem]{Proposition}
\newtheorem{corollary}[theorem]{Corollary}
\newtheorem{definition}[theorem]{Definition}
\newtheorem{counterexample}[theorem]{Counter-example}
\newtheorem{example}[theorem]{Example}
\theoremstyle{remark}
\newtheorem{remark}[theorem]{Remark}
\newcommand{\la}{\langle}
\newcommand{\ra}{\rangle}
\newcommand{\bd}{\partial}
\newcommand{\x}{\times}
\newcommand{\Vol}{\operatorname{Vol}}
\newcommand{\cA}{{\mathcal A}}
\newcommand{\cC}{{\mathcal C}}
\newcommand{\cM}{{\mathcal M}}
\newcommand{\cL}{{\mathcal L}}
\newcommand{\cS}{{\mathcal S}}
\newcommand{\cT}{{\mathcal T}}
\newcommand{\cV}{{\mathcal V}}
\newcommand{\PP}{{\mathbb P}}
\newcommand{\QQ}{{\mathbb Q}}
\newcommand{\RR}{{\mathbb R}}
\newcommand{\TT}{{\mathbb T}}
\newcommand{\ZZ}{{\mathbb Z}}
\renewcommand{\a}{\alpha}
\renewcommand{\b}{\beta}
\renewcommand{\d}{\delta}
\newcommand{\g}{\gamma}
\renewcommand{\o}{\omega}
\newcommand{\G}{\Gamma}
\begin{document}

\title{Schwartzman cycles and ergodic solenoids}

\subjclass[2000]{Primary: 37A99. Secondary: 58A25, 57R95, 55N45.} \keywords{Real homology,
Ruelle-Sullivan current, Schwartzman current, solenoid, ergodic
theory.}

\author[V. Mu\~{n}oz]{Vicente Mu\~{n}oz}
\address{Instituto de Ciencias Matem\'aticas
CSIC-UAM-UC3M-UCM, Consejo Superior de Investigaciones Cient\'{\i}ficas,
Serrano 113 bis, 28006 Madrid, Spain}

\address{Facultad de
Matem\'aticas, Universidad Complutense de Madrid, Plaza de Ciencias
3, 28040 Madrid, Spain}

\email{vicente.munoz@imaff.cfmac.csic.es}

\author[R. P\'{e}rez Marco]{Ricardo P\'{e}rez Marco}
\address{CNRS, LAGA UMR 7539, Universit\'e Paris XIII, 
99, Avenue J.-B. Cl\'ement, 93430-Villetaneuse, France}


\email{ricardo@math.univ-paris13.fr}

\thanks{Partially supported through Spanish MEC grant MTM2007-63582.}

\maketitle

\begin{abstract}
We extend Schwartzman theory beyond dimension $1$ and provide a unified
treatment of Ruelle-Sullivan and Schwartzman theories via Birkhoff's
ergodic theorem for the class of immersions of
solenoids with a trapping region.
\end{abstract}

\section{Introduction} \label{sec:introduction}

This is the second paper of
a series of articles \cite{MPM1,MPM3,MPM4,MPM5} in which we aim to
give a geometric realization of {\em real} homology classes in
smooth manifolds. This paper is devoted to the definition of Schwartzman
homology classes and its relationship with the generalized
currents associated to solenoids defined in \cite{MPM1}.

Let $M$ be a smooth manifold. A closed oriented submanifold $N\subset M$
of dimension $k\geq 0$ determines a homology class in
$H_k(M, \ZZ)$.
This homology class in $H_k(M,\RR)$, as dual of De
Rham cohomology, is explicitly given by integration of the
restriction to $N$ of differential $k$-forms on $M$.
Unfortunately, because of topological reasons dating back to Thom
\cite{Thom1}, not all integer homology classes in
$H_k(M,\ZZ )$ can be realized in such a way. Geometrically, we can
realize any class in $H_k(M, \ZZ)$ by topological $k$-chains. The
real homology $H_k(M,\RR)$ classes are only realized by formal
combinations with real coefficients of $k$-cells. This is not
satisfactory for various reasons. In particular, for diverse
purposes it is important to have an explicit realization, as
geometric as possible, of real homology classes.

The first contribution in this direction came in 1957 from the work
of S.\ Schwartzman \cite{Sc}. Schwartzman showed how, by a limiting
procedure, one-dimensional curves embedded in $M$ can define a real
homology class in $H_1(M,\RR)$. More precisely, he proved that this
happens for almost all curves solutions to a differential equation
admitting an invariant ergodic probability measure. Schwartzman's
idea consists on integrating $1$-forms over
large pieces of the parametrized curve and normalizing this integral
by the length of the parametrization. Under suitable conditions, the
limit exists and defines an element  of the dual of $H^1(M,\RR )$,
i.e. an element of $H_1(M, \RR )$. This procedure is equivalent to
the more geometric one of closing large pieces of the curve by
relatively short closing paths. The closed curve obtained defines an
integer homology class. The normalization by the length of the
parameter range provides a class in $H_1(M,\RR )$. Under suitable
hypothesis, there exists a unique limit in real homology when the
pieces exhaust the parametrized curve, and this limit is independent
of the closing procedure.
In sections \ref{sec:1-schwartzman} and \ref{sec:calibrating}, we shall study
this circle of ideas in great generality. In section \ref{sec:1-schwartzman}
we shall define Schwartzman cycles for parametrized and unparametrized curves
in $M$, and study their properties. In section \ref{sec:calibrating}, we
explore an alternative route to define real homology classes associated
to
curves in $M$ by using the universal covering $\pi:\tilde M\to M$.


It is natural to ask whether it is possible to realize every real
homology class using Schwartzman limits. By the result of \cite{MPM4},
we can realize any real homology class by the generalized current
associated to an immersed oriented uniquely
ergodic solenoid. A {\em solenoid} (see \cite{MPM1}) is an abstract
laminated space endowed with a transversal structure. For these
oriented solenoids we can consider $k$-forms that we can integrate
provided that we are given a transversal measure invariant by the
holonomy group. An immersion of a solenoid $S$ into $M$
is a regular map $f: S\to M$ that is an immersion in each leaf. If
the solenoid $S$ is endowed with a transversal measure $\mu=(\mu_T)$, then
any smooth $k$-form in $M$ can be pulled back to $S$ by $f$ and
integrated. The resulting numerical value only depends on the
cohomology class of the $k$-form. Therefore we have defined a closed
current that we denote by $(f,S_\mu )$ and that
call a generalized current \cite{MPM1}. It defines a homology class
$[f,S_\mu] \in H_k(M,\RR )$. This is reviewed in section \ref{sec:minimal}.

In section \ref{sec:Schwartzman-cycles}, we study the relation between
the generalized current defined by an immersed oriented measured
$1$-solenoid $S_\mu$ and the Schwartzman measure defined by any one of its
leaves. The relationship is best expressed for ergodic and uniquely
ergodic solenoids. In the first case, almost all $\mu_T$-leaves define
Schwartzman classes which represent $[f,S_\mu]$. In the second case,
the property holds for all leaves.

Section \ref{sec:k-schwartzman} is devoted to the generalization of the
Schwartzman theory to higher dimensions. For a complete
$k$-dimensional immersed submanifold $N\subset M$ of a Riemannian manifold,
we define a Schwartzman class by taking large balls, closing them
with small caps, normalizing the homology class thus obtained and
finally taking the limit. This process is only possible when such
capping exist. If $S$ is a $k$-solenoid immersed in $M$,
one would naturally expect that there is some
relation between the generalized currents and the Schwartzman
current (if defined) of the leaves. The main
result is that there is such relation for the class of minimal,
ergodic solenoids with a trapping region (see definition
\ref{def:trapping}). For such solenoids, the holonomy group is generated by a single
map. Then the bridge between generalized currents and Schwartzman
currents of the leaves is provided by Birkhoff's ergodic theorem.
We prove the following:

\begin{theorem}\label{thm:1.3}
Let $S_\mu$ be an oriented and minimal solenoid endowed with an
ergodic transversal measure $\mu$, and possessing a trapping region $W$.
Let $f: S_\mu \to M$ be an
immersion of $S_\mu$ into $M$ such that $f(W)$ is
contained in a ball. Then for $\mu_T$-almost all leaves
$l\subset S_\mu$, the Schwartzman homology class of $f(l)\subset M$
is well defined and coincides with the homology class
$[f,S_\mu]$.
\end{theorem}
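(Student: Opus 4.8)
The plan is to reduce the statement, via the trapping region $W$, to an application of Birkhoff's ergodic theorem along the single generator of the holonomy group, and then to translate the ergodic averages of pulled-back $k$-forms into the Schwartzman capping construction of Section~\ref{sec:k-schwartzman}.

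First I would unpack the structure provided by the trapping region. By Definition~\ref{def:trapping} (and the discussion preceding the theorem), the existence of $W$ forces the solenoid $S$ to be of a very rigid type: its holonomy pseudogroup on the transversal $T$ is generated by a single return map $\varphi\colon T\to T$, so that a leaf $l$ is obtained from a fundamental piece (the part of the leaf meeting $W$, a compact $k$-submanifold-with-boundary whose image $f(W)$ sits inside a ball $B\subset M$) by iterating the ``gluing'' dictated by $\varphi$. Thus the leaf $f(l)$ is exhausted by an increasing family of compact pieces $K_n$, where $K_n$ consists of $n$ consecutive translates of the fundamental piece under the dynamics; the boundary $\partial K_n$ maps into finitely many translates of $B$, hence each boundary component can be capped by a small $k$-cap inside a ball. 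This is exactly the capping data needed to define the Schwartzman homology class of $f(l)$: one caps $f(K_n)$, obtains a cycle $[\,\overline{f(K_n)}\,]\in H_k(M,\ZZ)$, normalizes by the ``volume'' (the number $n$ of fundamental pieces, up to the bounded contribution of $W$), and takes $n\to\infty$.

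Next I would evaluate the pairing of this normalized cycle against an arbitrary closed $k$-form $\omega$ on $M$. Since the capping caps lie in balls, the integral of $\omega$ over each cap is controlled (bounded independently of $n$, and in fact its total contribution is $o(n)$), so
\[
\frac{1}{n}\left\langle [\,\overline{f(K_n)}\,],[\omega]\right\rangle
=\frac{1}{n}\int_{K_n} f^*\omega+o(1)
=\frac{1}{n}\sum_{j=0}^{n-1} g(\varphi^j(t))+o(1),
\]
where $g(t)=\int_{P_t} f^*\omega$ is the integral of $f^*\omega$ over the fundamental piece $P_t$ sitting over the transversal point $t\in T$, and $t$ is the transversal parameter of the chosen leaf $l$. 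Here $g\in L^1(T,\mu_T)$ because $f^*\omega$ is a bounded form on the compact piece $W$ and $\mu$ is a finite transversal measure. Now Birkhoff's ergodic theorem applies to the measure-preserving transformation $\varphi$ of $(T,\mu_T)$: since $\mu$ is ergodic, for $\mu_T$-almost every $t$ the averages $\frac1n\sum_{j<n} g(\varphi^j t)$ converge to $\int_T g\,d\mu_T$, which is precisely $\langle [f,S_\mu],[\omega]\rangle$ by the definition of the generalized current in Section~\ref{sec:minimal}. Since this holds for every closed $\omega$ in a countable basis of $H^k(M,\RR)$ simultaneously on a common full-measure set of $t$, the Schwartzman limit exists for $\mu_T$-almost every leaf and equals $[f,S_\mu]$.

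The main obstacle, and the step requiring the most care, is the geometric bookkeeping that makes the informal identity ``$\int_{K_n}f^*\omega=\sum_{j<n} g(\varphi^j t)$'' rigorous: one must show that iterating the trapping-region dynamics genuinely produces an \emph{exhaustion} of the leaf by the pieces $K_n$ (so that the Schwartzman limit, defined via balls of growing radius in the leaf metric, is computed by this particular sequence), that the overlaps between consecutive fundamental pieces contribute nothing in the limit, and — crucially for the Schwartzman \emph{class} to be well defined — that the answer is independent of the capping choices. Independence of cappings follows because the difference of two cappings of $\partial K_n$ is a cycle supported in a bounded union of balls, hence bounded, hence negligible after dividing by $n$; the exhaustion and overlap estimates are where the precise definition of ``trapping region'' does the work, and where one invokes that $f(W)$ lies in a single ball to keep all capping caps uniformly small. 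Minimality of $S$ is used to guarantee that every leaf (not just $\mu_T$-a.e.\ one) meets $W$, so that the fundamental-piece decomposition is available for the leaves under consideration.
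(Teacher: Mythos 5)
Your proposal follows essentially the same route as the paper's proof of Theorem \ref{thm:11.12bis}: use Proposition \ref{prop:trapping} to see that the holonomy is generated by a single return map on the global transversal $T$, decompose each leaf into the fundamental pieces $\overline L_{R_T^j(x_0)}$, replace Riemannian ball exhaustions by exhaustions with boundary in $W$ (via Theorem \ref{thm:trapping}), close the resulting chunks inside the ball $B\supset f(W)$, and apply Birkhoff's ergodic theorem to identify the limit with $[f,S_\mu]$. The only cosmetic difference is that the paper applies Birkhoff twice, directly to the $H_k(M,\ZZ)$-valued map $\varphi_T(x)=[N_x]$ and to the volume map $l_T(x)=\Vol_k(\overline L_x)$, obtaining the Schwartzman class as the ratio $\int_T\varphi_T\,d\mu_T\big/\int_T l_T\,d\mu_T$, whereas you phrase the Birkhoff average via pairings with a countable family of closed $k$-forms and then normalize by the count $n$ of fundamental pieces; to land exactly on the Riemann--Schwartzman class of Definition \ref{def:schwartzman-cluster-k-dim2} (which normalizes by $\Vol_k$, not by $n$) you still need the second Birkhoff application showing $\Vol_k(f(\hat U_{n,m}))/(m-n)\to\mu(S)=1$ almost everywhere, a step you gesture at but should make explicit.
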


We are particularly interested in uniquely ergodic solenoids, with
only one ergodic transversal measure. As is well known, in this
situation we have uniform convergence of Birkhoff's sums, which
implies the stronger result:

\begin{theorem}\label{thm:1.4}
Let $S_\mu$ be a minimal, oriented and uniquely ergodic solenoid
which has a trapping region $W$. Let $f:
S_\mu \to M$ be an immersion of $S_\mu$ into $M$ such that $f(W)$ is 
contained in a ball. Then for all
leaves $l\subset S_\mu$, the Schwartzman homology class of
$f(l)\subset M$ is well defined and coincides with the homology class
$[f,S_\mu]$.
\end{theorem}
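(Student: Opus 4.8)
The plan is to run the argument that proves Theorem~\ref{thm:1.3}, replacing Birkhoff's pointwise ergodic theorem by its uniform counterpart, valid for uniquely ergodic systems (Oxtoby's theorem); this uniformity is exactly what promotes the conclusion from ``$\mu_T$-almost every leaf'' to ``every leaf''. First I would reduce the statement to a pairing with De Rham cohomology. Fix a closed $k$-form $\omega$ on $M$. For a leaf $l$ and a point $x\in l$, the Schwartzman construction produces, from a metric ball $B_R(x)\subset l$ (with respect to the leafwise metric induced by $f$) closed up by small capping $k$-chains, an integral homology class whose pairing with $[\omega]$ equals $\int_{B_R(x)}f^*\omega$ plus the $\omega$-integral over the caps; the value on $[\omega]$ of the Schwartzman class of $f(l)$ is the limit as $R\to\infty$ of that quantity divided by $\Vol B_R(x)$, when it exists. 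So it suffices to prove, for every such $\omega$, $l$, $x$, that $(\Vol B_R(x))^{-1}\int_{B_R(x)}f^*\omega$ converges --- independently of $x$, of the exhausting sequence, and of the caps --- to $\langle [f,S_\mu],[\omega]\rangle$, the caps contributing a negligible $o(\Vol B_R(x))$.

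Next I would invoke the structure of solenoids with a trapping region (Definition~\ref{def:trapping} and the remark following it): minimality together with the trapping region $W$ forces the holonomy on a transversal $T$ to be generated by a single map $\phi\colon T\to T$ that preserves $\mu_T$, the solenoid being uniquely ergodic exactly when $(T,\phi,\mu_T)$ is. Following the leaf $l$ through $x$, which corresponds to a point $x_0\in T$, one writes $l$ --- away from a fixed compact central piece --- as a concatenation of ``fundamental tiles'' $D_j$, the holonomy translates of a model tile $D(y)$ along the orbit of $x_0$. Since $f$ is smooth and the tiles vary continuously with $y\in T$, the functions $\Phi_\omega(y)=\int_{D(y)}f^*\omega$ and $V(y)=\Vol D(y)$ belong to $C(T)$ and satisfy $\int_{D_j}f^*\omega=\Phi_\omega(\phi^{j}x_0)$ and $\Vol D_j=V(\phi^{j}x_0)$. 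Moreover a union of consecutive tiles has boundary lying on a holonomy copy of $T\subset W$, so its $f$-image lies in the ball containing $f(W)$ and can be coned off there by caps of controlled volume. Comparing $B_R(x)$ with the union of the tiles it meets then yields $\int_{B_R(x)}f^*\omega=\sum_{j}\Phi_\omega(\phi^{j}x_0)+o(\Vol B_R(x))$ and $\Vol B_R(x)=\sum_{j}V(\phi^{j}x_0)+o(\Vol B_R(x))$, the sums over an orbit segment of length $N(R)\to\infty$, with all error terms, and the caps, controlled by the layer of tiles straddling $\partial B_R(x)$.

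By Oxtoby's theorem, unique ergodicity of $(T,\phi,\mu_T)$ and the continuity of $\Phi_\omega$ and $V$ give that $N^{-1}\sum_{j}\Phi_\omega(\phi^{j}y)$ and $N^{-1}\sum_{j}V(\phi^{j}y)$ converge, as $N\to\infty$, uniformly in $y\in T$, to $\int_T\Phi_\omega\,d\mu_T$ and $\int_T V\,d\mu_T$. Dividing the two asymptotics above, $(\Vol B_R(x))^{-1}\int_{B_R(x)}f^*\omega$ converges, uniformly in $x$, to $(\int_T\Phi_\omega\,d\mu_T)/(\int_T V\,d\mu_T)$; with the normalization of the transversal measure fixed in Section~\ref{sec:minimal}, this ratio is $\langle [f,S_\mu],[\omega]\rangle$, because slicing the integral of $f^*\omega$ over $S_\mu$ along $T$ and integrating against $\mu_T$ is the definition of the generalized current. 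The uniformity in $x$ and $x_0$ shows the limit is the same for every leaf and independent of the exhausting sequence, hence of the capping; this is the theorem.

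The step requiring the most care is the geometric comparison in the second paragraph: matching a metric $R$-ball inside a $k$-dimensional leaf with a union of holonomy tiles up to a boundary layer of $o(\Vol B_R)$ tiles, and checking that the capping chains --- which exist precisely because $f(W)$ lies in a ball --- have total volume $o(\Vol B_R)$. This is where the trapping-region hypothesis is genuinely used, to bound the growth and shape of leaves, and for $k\ge 2$ it needs more than the one-dimensional closing-up argument of Schwartzman. By contrast the ergodic-theoretic input, uniform convergence of ergodic averages for the uniquely ergodic map $\phi$, is standard, and the identification of the limit with $[f,S_\mu]$ is a direct unwinding of the definition of the generalized current recalled in Section~\ref{sec:minimal}.
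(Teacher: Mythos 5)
Your proposal is correct, and the overall strategy — trapping region gives a single generating holonomy map $R_T$, tiles $\overline{L}_{R_T^j(x_0)}$ along the orbit, comparison of Riemannian balls with unions of tiles, caps inside the ball containing $f(W)$, and Birkhoff-type averaging on the transversal — matches the paper's proof of the ergodic version (Theorem~\ref{thm:11.12bis}). The promotion from ``almost every leaf'' to ``every leaf'' is where you diverge from the paper's written route. You invoke Oxtoby's uniform ergodic theorem on the continuous observables $\Phi_\omega(y)=\int_{\overline L_y} f^*\omega$ and $V(y)=\Vol_k(\overline L_y)$, whereas the paper's sketch (the remark following Theorem~\ref{thm:11.12bis}) instead checks that the exhaustion $(\hat U_n)$ has controlled growth in the sense of Definition~\ref{def:controlled-growth} and then cites Corollary~\ref{cor:volume}: for a uniquely ergodic solenoid with controlled growth, the normalized volume measures $\mu_n$ on $\hat U_n$ all converge weakly to the one invariant daval measure $\mu$, regardless of leaf or exhaustion, and pairing $\mu_n$ with $f^*\omega$ gives the Schwartzman limit. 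These are two faces of the same fact (the introduction itself says the key is ``uniform convergence of Birkhoff's sums''), but the measure-theoretic route dispenses with the need to check continuity: in the paper $\varphi_T$ and $l_T$ are only asserted to be measurable and bounded, which is all Birkhoff needs, whereas your Oxtoby argument genuinely requires $\Phi_\omega, V\in C(T)$ and (for the $H_k(M,\ZZ)$-valued $\varphi_T$) local constancy. Those continuity claims do hold here because conditions (4)--(5) of Definition~\ref{def:trapping} force the tiles $\overline L_y$ to vary continuously with $y\in T$ with no holonomy inside $W$, but that is an extra point you should note, since the analogous $l_T$ in the $1$-dimensional Theorem~\ref{thm:Ruelle-1-solenoid} is only upper semicontinuous. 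With that small addition both routes are complete; yours is slightly more self-contained, the paper's reuses the solenoidal Schwartzman-measure machinery already built in Section~3.
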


\noindent \textbf{Acknowledgements.} \
The authors are grateful to Alberto Candel, Etienne Ghys, Nessim Sibony,
Dennis Sullivan and Jaume Amor\'os
for their comments and interest on this work.
The first author wishes to acknowledge Universidad Complutense de
Madrid and Institute for Advanced Study at Princeton for their
hospitality and for providing excellent working conditions.  The second author
thanks Jean Bourgain and the IAS at Princeton for their hospitality
and facilitating the collaboration of both authors.

\section{Solenoids and generalized currents} \label{sec:minimal}

Let us review the main concepts introduced in \cite{MPM1}, and that we shall use later in this paper.

\begin{definition}\label{def:k-solenoid} A
$k$-solenoid, where $k\geq 0$, of class $C^{r,s}$, is a compact Hausdorff space endowed with an atlas
of flow-boxes $\cA=\{ (U_i,\varphi_i)\}$,
 $$
 \varphi_i:U_i\to D^k\x K(U_i)\, ,
 $$
where $D^k$ is the $k$-dimensional open ball, and $K(U_i)\subset \RR^l$ is the transversal
set of the flow-box. The changes of charts $\varphi_{ij}=\varphi_i\circ
\varphi_j^{-1}$ are of the form
 \begin{equation}\label{eqn:change-of-charts}
 \varphi_{ij}(x,y)=(X(x,y), Y(y))\, ,
 \end{equation}
where $X(x,y)$ is of class $C^{r,s}$ and $Y(y)$ is of class $C^s$.
\end{definition}

Let $S$ be a $k$-solenoid, and $U\cong D^k \x K(U)$ be a flow-box for $S$. The sets
$L_y= D^k\x \{y\}$ are called the (local) leaves of the flow-box. A leaf $l\subset S$ of the
solenoid is a connected $k$-dimensional manifold whose intersection with any flow-box
is a collection of local leaves. The solenoid is oriented if the leaves are oriented
(in a transversally continuous way).

A transversal for $S$ is a subset $T$ which is a finite union of transversals of flow-boxes.
Given two local transversals $T_1$ and $T_2$ and
a path contained in a leaf from a point of $T_1$ to a point of $T_2$,
there is a well-defined holonomy map $h:T_1\to T_2$. The holonomy maps form a pseudo-group.

A $k$-solenoid $S$ is minimal if it does not contain a proper sub-solenoid. By \cite[section 2]{MPM1},
minimal solenoids exist. If $S$ is minimal, then any transversal is a global transversal, i.e., it
intersects all leaves. In the special case of an oriented minimal $1$-solenoid, the holonomy
return map associated to a local transversal,
 $$
 R_T:T \to T
 $$
is known as the Poincar\'e return map (see \cite[Section 4]{MPM1}).

\begin{definition} \label{def:transversal-measure}
Let $S$ be a $k$-solenoid. A transversal measure $\mu=(\mu_T)$ for
$S$ associates to any local transversal $T$ a locally finite measure
$\mu_T$ supported on $T$, which are invariant by the holonomy
pseudogroup, i.e. if $h : T_1 \to T_2$ is a holonomy map, then
$h_* \mu_{T_1}= \mu_{T_2}$.
\end{definition}

We denote by $S_\mu$ a $k$-solenoid $S$ endowed with a transversal
measure $\mu=(\mu_T)$. We refer to $S_\mu$ as a measured solenoid.
Observe that for any transversal measure $\mu=(\mu_T)$ the scalar
multiple $c\, \mu=(c \, \mu_T)$, where $c>0$, is also a transversal
measure. Notice that there is no natural scalar normalization of
transversal measures.

\begin{definition} \label{def:transversal-ergodicity}\textbf{\em (Transverse ergodicity)}
A transversal measure $\mu=(\mu_T )$ on a solenoid $S$ is ergodic if for any Borel set
$A\subset T$ invariant by the pseudo-group of holonomy maps on $T$,
we have
 $$
 \mu_T(A) = 0 \ \ {\hbox{\rm{ or }}} \ \ \mu_T(A) = \mu_T(T) \, .
 $$
We say that $S_\mu$ is an ergodic solenoid.
\end{definition}

\begin{definition} \label{def:transversal-unique-ergodicity}
Let $S$ be a $k$-solenoid. The solenoid $S$ is
uniquely ergodic if it has a unique (up to scalars)
transversal measure $\mu$ and its support is the whole of $S$.
\end{definition}

\bigskip

Now let $M$ be a smooth manifold of dimension $n$. An immersion of a $k$-solenoid
$S$ into $M$, with $k<n$, is a smooth map $f:S\to M$ such that the differential
restricted to the tangent spaces of leaves has rank $k$ at every
point of $S$. The solenoid $f:S\to M$ is
transversally immersed if for any flow-box $U\subset S$ and chart
$V\subset M$, the map $f:U= D^k\x K(U) \to V\subset \RR^n$ is
an embedding, and the images of
the leaves intersect transversally in $M$. If moreover $f$ is injective, then
we say that the solenoid is embedded.

Note that under a transversal immersion, resp.\ an embedding,
$f:S\to M$, the images of the leaves are immersed, resp.\
injectively immersed, submanifolds.

\bigskip

Let $\cC_k(M)$ denote the space of $k$-dimensional currents on $M$.

\begin{definition}\label{def:Ruelle-Sullivan}
Let $S_\mu$ be an oriented measured $k$-solenoid. An immersion
$f:S\to M$
defines a generalized Ruelle-Sullivan current $(f,S_\mu)\in \cC_k(M)$ as follows.
Let $S=\bigcup_i S_i$ be a measurable partition such that each
$S_i$ is contained in a flow-box $U_i$. For $\omega\in \Omega^k(M)$, we define
 $$
 \la (f,S_\mu),\omega \ra=\sum_i \int_{K(U_i)} \left ( \int_{L_y\cap S_i}
 f^* \omega \right ) \ d\mu_{K(U_i)} (y) \, ,
 $$
where $L_y$ denotes the horizontal disk of the flow-box.
\end{definition}

In \cite{MPM1} it is proved that $(f,S_\mu)$ is a closed current. Therefore, it defines
a real homology class
 $$
  [f,S_\mu]\in H_k(M,\RR)\, .
  $$
In their original article \cite{RS}, Ruelle and Sullivan defined
this notion for the restricted class of solenoids embedded in $M$.

\section{Schwartzman measures} \label{sec:schwartzman}

Let $S$ be a Riemannian $k$-solenoid, that is, a solenoid endowed with a Riemmanian
metric on each leaf. In some situations, we may define transversal measures associated
to $S$ by considering large chunks of a single leaf $l\subset S$. These will
be called Schwartzman measures. We start by recalling some notions from \cite[Section 6]{MPM1}.

\begin{definition}\label{def:desintegrate}\textbf{\em (daval
measures)} Let $\mu$ be a measure supported on $S$. The measure
$\mu$ is a daval measure if it desintegrates as volume along leaves
of $S$, i.e.\ for any flow-box $(U,\varphi)$ with local transversal
$T=\varphi^{-1}(\{0\}\x K(U))$, we have a measure $\mu_{U,T}$
supported on $T$ such that for any Borel set $A\subset U$
 \begin{equation}\label{eqn:1111}
 \mu(A)=\int_{T} {\Vol}_k(A_y) \ d\mu_{U,T}(y) \, ,
 \end{equation}
where $A_y=A\cap \varphi^{-1} (D^k\times \{ y \} )\subset U$.
\end{definition}

We denote by $\cM_\cL (S)$ the space of probability
daval measures, by $\cM_\cT(S)$ the space of (non-zero) transversal measures on $S$, and by
$\overline{\cM}_\cT(S)$ the quotient of ${\cM}_\cT(S)$  by
positive scalars.
The following result is Theorem 6.8 in \cite{MPM1}.

\begin{theorem} \label{thm:transverse-riemannian}
\textbf{\em (Tranverse measures of the Riemannian solenoid)} There
is a one-to-one correspondence between transversal measures $(\mu_T)$
and finite daval measures $\mu$. Furthermore, there is an
isomorphism
  $$
  \overline{\cM}_\cT(S) \cong \cM_\cL(S)\, .
  $$
\end{theorem}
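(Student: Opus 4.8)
The plan is to exhibit two maps -- one sending a transversal measure to a measure on $S$, the other going back via leafwise disintegration -- and to verify they are mutually inverse; the projective statement then follows by normalizing total mass. Given a nonzero transversal measure $\mu=(\mu_T)$, I would first fix a finite cover of the compact solenoid $S$ by flow-boxes $(U_i,\varphi_i)$ and a Borel partition $S=\bigsqcup_i S_i$ with $S_i\subset U_i$, and define, for Borel $A\subset S$,
$$
\mu(A)=\sum_i \int_{K(U_i)} \Vol_k\big((A\cap S_i)_y\big)\,d\mu_{K(U_i)}(y),
$$
where $(A\cap S_i)_y=(A\cap S_i)\cap\varphi_i^{-1}(D^k\x\{y\})$ and $K(U_i)$ is identified with the local transversal -- this is the same leafwise-then-transversally integration scheme as in Definition~\ref{def:Ruelle-Sullivan}, now fed the leafwise volume. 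One checks that $\mu$ is a finite Borel measure (finiteness by compactness of $S$) and, crucially, that it does not depend on the cover or the partition: passing to a common refinement, on an overlap $U_i\cap U_j$ the change of charts has the form $(x,y)\mapsto(X(x,y),Y(y))$, so the leafwise volume $\Vol_k$ -- being intrinsic to the leaves -- is unaffected, while the transverse factor $Y$ is a holonomy map, under which the family $(\mu_T)$ is invariant; hence the two expressions for $\mu(A)$ agree. Refining the partition so that $A$ lies in a single piece contained in a prescribed flow-box $(U,T)$ then shows directly that $\mu$ is daval, its local transverse measure $\mu_{U,T}$ being $\mu_T$ carried over by $\varphi$.

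Conversely, from a finite daval measure $\mu$ the defining property supplies, for every flow-box $(U,T)$, a measure $\mu_{U,T}$ on $T$, and it is uniquely pinned down by $\mu$ since $\mu\big(\varphi^{-1}(B\x\varphi(T'))\big)=\Vol_k(B)\,\mu_{U,T}(T')$ for every $B\subset D^k$ of positive finite volume and every $T'\subset T$; in particular each $\mu_{U,T}$ is finite. The heart of the argument is holonomy invariance of the family $(\mu_{U,T})$. It suffices to treat an elementary holonomy $h$ coming from an overlap $U_1\cap U_2\neq\emptyset$, because any holonomy factors as a finite composition of such along a chain of flow-boxes covering a leaf path. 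For $A\subset U_1\cap U_2$, disintegrating $\mu$ restricted to $A$ through $\varphi_1$ and through $\varphi_2$ and using that the slice of $A$ in the leaf through $y$ and the slice of $A$ in the leaf through $h(y)$ are one and the same subset of one and the same leaf (hence have equal $\Vol_k$), one obtains $h_*\mu_{U_1,T_1}=\mu_{U_2,T_2}$ on the relevant sub-transversals -- it is enough to test this on product sets $\varphi^{-1}(B\x\varphi(T'))$. Thus the $\mu_{U,T}$ patch to a well-defined, finite, holonomy-invariant family $(\mu_T)\in\cM_\cT(S)$, nonzero since $\mu\neq 0$.

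It remains to see the two assignments invert one another, which is immediate from the formulas: starting from $(\mu_T)$, the daval measure built above has local transverse factors $\mu_T$ back again; and starting from a daval $\mu$, the partition formula rebuilds $\mu'(A)=\sum_i\mu(A\cap S_i)=\mu(A)$ by the daval property in each $U_i$. This gives the asserted one-to-one correspondence between transversal measures and finite daval measures. Since it is positively homogeneous ($c\mu\leftrightarrow(c\mu_T)$) and a finite nonzero daval measure has total mass $\mu(S)\in(0,\infty)$, each positive-scalar class of transversal measures contains exactly one representative whose associated daval measure is a probability measure, which yields the isomorphism $\overline{\cM}_\cT(S)\iso\cM_\cL(S)$.

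The main obstacle is the holonomy-invariance step and the measure-theoretic bookkeeping behind it: one must know that inside a flow-box the partition into plaques $L_y$ is a measurable partition so that the conditional measures of a daval measure make sense and are unique, and then check that the canonical transverse factors computed in two overlapping charts are genuinely matched by the holonomy rather than merely by an abstract measurable identification. The feature that makes this go through cleanly is precisely that the leafwise Riemannian volume is intrinsic to the leaves and independent of the flow-box chart, so the fiber part of every disintegration is literally the same in all charts and only the transverse factors ever need to be compared.
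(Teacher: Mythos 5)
The paper itself does not prove this statement; it is imported verbatim as ``Theorem 6.8 in \cite{MPM1}'' with only the one-line remark that the correspondence follows from equation~(\ref{eqn:1111}). Your proof correctly supplies the argument that remark gestures at, and the approach is the natural one: construct $\mu$ from $(\mu_T)$ by the partitioned flow-box integral (the same scheme as Definition~\ref{def:Ruelle-Sullivan} fed with leafwise volume), recover $(\mu_T)$ from $\mu$ by the uniqueness of the disintegration that (\ref{eqn:1111}) forces on product sets, and reduce holonomy-invariance to elementary overlaps where the only two facts needed are that $\Vol_k$ is intrinsic to the leaf (so the fibre integrands are literally equal) and that $h_*\mu_{T_1}=\mu_{T_2}$; positive homogeneity plus normalization by $\mu(S)$ then gives $\overline{\cM}_\cT(S)\cong\cM_\cL(S)$. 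I see no gap; you also correctly identify the only subtle points (measurability of the plaque partition and uniqueness of conditional measures, and matching transverse factors by holonomy rather than an abstract measurable isomorphism) and resolve them with the right observation that the chart change $(x,y)\mapsto(X(x,y),Y(y))$ decouples the leaf and transversal directions.
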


The correspondence follows from equation (\ref{eqn:1111}).
If $S$ is a uniquely ergodic Riemannian solenoid, then the above result allows to normalize
the transversal measure in a unique way, by imposing that the corresponding daval measure
has total mass $1$.

Now we introduce a subclass of solenoids for which daval measures do exist.

\begin{definition}\label{def:controlled-growth}\textbf{\em (Controlled growth solenoids)}
Let $S$ be a Riemannian solenoid. Fix a leaf $l\subset S$ and an
exhaustion $(C_n)$ by subsets of $l$. For a flow-box $(U,\varphi)$
write
 $$
 C_n \cap U = A_n\cup B_n \, ,
 $$
where $A_n$ is composed by all full disks $L_y=\varphi^{-1}
(D^k\times \{y\})$ contained in $C_n$, and $B_n$ contains those
connected components $B$ of $C_n\cap U$ such that $B\not= L_{y}\cap
U$ for any $y$. The solenoid $S$ has controlled growth with respect
to $l$ and $(C_n)$ if for any flow-box $U$ in a finite covering of
$S$
 $$
 \lim_{n\to +\infty } \frac{{\Vol}_k(B_n) }{{\Vol}_k(A_n)}=0\, .
 $$

 The solenoid $S$ has controlled growth if $S$ contains a leaf $l$ and an
exhaustion $(C_n)$ such that $S$ has controlled growth with respect
to $l$ and $(C_n)$.
\end{definition}

For a Riemannian solenoid $S$, it is natural to consider the
exhaustion by Riemannian balls $B(x_0,R_n)$ in a leaf $l$ centered at a point
$x_0\in l$ and with $R_n\to +\infty$, and test the controlled growth
condition with respect to such exhaustions.

The controlled growth condition depends a priori on the Riemannian
metric. As we see next, it guarantees the existence of daval
measures, hence the existence of transversal measures on $S$. Indeed
the measures we construct are Schwartzman measures defined as:

\begin{definition}\label{def:Schwartzman limit}\textbf{\em (Schwartzman
limits and measures)} We say that a measure $\mu$ is a Schwartzman
measure if it is obtained as the limit
 $$
 \mu=\lim_{n\to +\infty } \mu_{n}\, ,
 $$
where the measures $(\mu_n)$ are the normalized $k$-volume of the
exhaustion $(C_n)$ (that is, $\mu_n$ are normalized to have total
mass $1$). We denote by
$\cM_\cS (S)$ the space of (probability) Schwartzman measures.
\end{definition}

Compactness of probability measures show:

\begin{proposition}
There are always Schwartzman measures on $S$,
 $$
 \cM_\cS(S)\not= \emptyset \, .
 $$
\end{proposition}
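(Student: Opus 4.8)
The plan is to realize $\cM_\cS(S)$ as a nonempty set by extracting a convergent subsequence from the sequence $(\mu_n)$ of normalized $k$-volumes of an exhaustion, using weak-$*$ compactness of the space of probability measures on the compact space $S$. First I would fix any leaf $l\subset S$, any point $x_0\in l$, and an exhaustion $(C_n)$ of $l$ by subsets of finite $k$-volume with $\Vol_k(C_n)>0$ for $n$ large — for instance the Riemannian balls $C_n=B(x_0,R_n)$ with $R_n\to+\infty$, which have finite volume because $S$ is compact and each leaf is a complete Riemannian manifold, hence balls are precompact. Define
 $$
 \mu_n(A)=\frac{\Vol_k(A\cap C_n)}{\Vol_k(C_n)}
 $$
for Borel $A\subset S$; each $\mu_n$ is a Borel probability measure supported on $\overline{C_n}\subset S$.

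Next I would invoke the fact that $S$ is a compact Hausdorff space (Definition \ref{def:k-solenoid}), so by the Banach--Alaoglu/Riesz theory the space $\cM(S)$ of Borel probability measures on $S$, viewed inside the dual of $C(S)$, is weak-$*$ compact, and since $S$ is metrizable (it has a finite atlas of flow-boxes modeled on metrizable pieces $D^k\times K(U_i)$ with $K(U_i)\subset\RR^l$) this space is in fact sequentially compact in the weak-$*$ topology. Therefore the sequence $(\mu_n)$ has a subsequence $(\mu_{n_j})$ converging weak-$*$ to some probability measure $\mu\in\cM(S)$. By Definition \ref{def:Schwartzman limit}, this limit $\mu$ is a Schwartzman measure, so $\cM_\cS(S)\neq\emptyset$.

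There is essentially no hard obstacle here; the only point requiring a line of care is the finiteness and positivity of $\Vol_k(C_n)$, which is needed for $\mu_n$ to be well-defined as a probability measure — this is exactly where compactness of $S$ enters, guaranteeing a uniform bound on the geometry of the leaves in each of the finitely many flow-boxes, so that Riemannian balls of finite radius have finite volume. One should also note that the definition of Schwartzman measure as stated asks for convergence of the full sequence $(\mu_n)$; the natural reading, consistent with $\cM_\cS(S)$ being the set of all such limits, is that a Schwartzman measure is any subsequential limit, and it is this reading that makes the proposition an immediate consequence of compactness. If instead one insists on full-sequence convergence, one simply replaces the original exhaustion by the subexhaustion $(C_{n_j})$, along which $(\mu_{n_j})$ converges by construction.
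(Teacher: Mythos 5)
Your proposal is correct and is exactly the argument the paper has in mind: the paper dispatches this proposition with the single phrase ``Compactness of probability measures show:'', and you are merely spelling out that one-line proof (weak-$*$ sequential compactness of probability measures on the compact metrizable space $S$, applied to the normalized volume measures of an exhaustion, with the subsequential/subexhaustion reading of Definition \ref{def:Schwartzman limit}). No further commentary needed.
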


\begin{theorem}\label{thm:desintegration-Schwartzman-measures}
If $S$ is a solenoid with controlled growth, then any Schwartzman
measure is a daval measure,
 $$
 \cM_\cS (S)\subset \cM_\cL (S) \, .
 $$
In particular, $\cM_\cL (S)\not= \emptyset$ and $S$ admits
transversal measures.
\end{theorem}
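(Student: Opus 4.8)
The plan is to construct, for every flow-box in a finite atlas, the transverse measure that disintegrates a given Schwartzman measure $\mu$ as $k$-volume along the local leaves. First fix a finite atlas of flow-boxes $\{(U_j,\varphi_j)\}$ covering $S$, shrinking them slightly so that each transversal $K(U_j)$ is relatively compact; then the leafwise volumes $\Vol_k(L_y)$ of the local leaves $L_y=\varphi_j^{-1}(D^k\times\{y\})$ are bounded above and below by positive constants, uniformly in $y\in K(U_j)$. Let $\mu=\lim_n\mu_n$ be the Schwartzman measure in question, with $\mu_n$ the normalized $k$-volume of $C_n$, and for each $j$ decompose $C_n\cap U_j=A_n^j\cup B_n^j$ as in Definition~\ref{def:controlled-growth}. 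On $K(U_j)$ I would introduce the auxiliary measures
$$
\lambda_n^j \;=\; \frac{1}{\Vol_k(C_n)}\sum_{y:\,L_y\subset C_n}\delta_y .
$$
Since the disks $L_y$ occurring in this sum are pairwise disjoint, contained in $C_n$, and each has volume at least some $c>0$, the total mass of $\lambda_n^j$ is at most $1/c$; hence $(\lambda_n^j)_n$ is relatively compact in the weak-$*$ topology. Passing to a subsequence of $(C_n)$ — along which $\mu_n\to\mu$ still holds — and doing so successively for the finitely many $j$, I may assume $\lambda_n^j\to\nu^j$ weakly-$*$ for every $j$.

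Next I would read off the leafwise disintegration of $\mu$ on each chart. Fix $j$ and a continuous $g$ with compact support in $U_j$. Since $A_n^j$ is exactly the disjoint union of the disks $L_y$ contained in $C_n$,
$$
\int_S g\,d\mu_n \;=\; \int_{K(U_j)}\!\Big(\int_{L_y} g\, d\Vol_k\Big)\,d\lambda_n^j(y)\;+\;\frac{1}{\Vol_k(C_n)}\int_{B_n^j} g\,d\Vol_k .
$$
The function $y\mapsto\int_{L_y}g\,d\Vol_k$ is continuous and bounded on $K(U_j)$, so the first term converges to $\int_{K(U_j)}\big(\int_{L_y}g\,d\Vol_k\big)\,d\nu^j(y)$. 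For the second term, $A_n^j\subset C_n$ forces $\Vol_k(C_n)\geq\Vol_k(A_n^j)$, so
$$
\Big|\frac{1}{\Vol_k(C_n)}\int_{B_n^j}g\,d\Vol_k\Big|\;\leq\;\|g\|_\infty\,\frac{\Vol_k(B_n^j)}{\Vol_k(A_n^j)}\;\longrightarrow\;0
$$
by the controlled-growth hypothesis. Letting $n\to\infty$ and using $\mu_n\to\mu$ gives $\int_{U_j} g\, d\mu = \int_{K(U_j)}\big(\int_{L_y} g\, d\Vol_k\big)\,d\nu^j(y)$ for every $g\in C_c(U_j)$. As both sides are integrals of $g$ against finite Borel measures on $U_j$, a routine monotone-class/Riesz argument upgrades this to $\mu(A)=\int_{K(U_j)}\Vol_k(A\cap L_y)\,d\nu^j(y)$ for all Borel $A\subset U_j$; i.e.\ $\mu$ disintegrates as $k$-volume along the local leaves of $U_j$, with transverse measure $\mu_{U_j,T}:=\nu^j$.

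To finish I would observe that this holds for every chart of a finite atlas, and that the daval property is local: the leaves of an arbitrary flow-box are locally leaves of the atlas charts, and the disintegrating transverse measures — uniquely determined by $\mu$ and the metric since $\Vol_k(L_y)\geq c>0$ — agree on overlaps, so $\mu$ is a daval measure in the sense of Definition~\ref{def:desintegrate}. Each $\mu_n$ is a probability measure on the compact space $S$, hence so is $\mu$; thus $\mu\in\cM_\cL(S)$ and $\cM_\cS(S)\subset\cM_\cL(S)$. Then $\cM_\cL(S)\neq\emptyset$ because Schwartzman measures exist (preceding proposition), and $S$ admits transversal measures by Theorem~\ref{thm:transverse-riemannian}. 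The one step that really uses the hypothesis is the vanishing of the $B_n^j$ term — it is exactly the controlled-growth condition $\Vol_k(B_n^j)/\Vol_k(A_n^j)\to0$, together with $\Vol_k(C_n)\geq\Vol_k(A_n^j)$, that kills the ``partial disk'' part of $\mu_n$ in the limit and leaves a clean leafwise-volume disintegration; the weak-$*$ compactness of the $\lambda_n^j$, the continuity of $y\mapsto\int_{L_y}g\,d\Vol_k$, the monotone-class upgrade, and the gluing of the local disintegrations I expect to be routine.
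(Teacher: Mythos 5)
Your proof is correct and follows essentially the same line as the paper's: both decompose $C_n\cap U$ into full disks $A_n$ and partial pieces $B_n$, both use the controlled-growth hypothesis to kill the $B_n$ contribution, and both pass a leafwise-volume disintegration through the weak-$*$ limit (the paper's $\nu_n$ is your $\lambda_n^j$ integrated against leafwise volume). Your version simply spells out the details the paper compresses into ``$(\nu_n)$ and $(\mu_{n|U})$ converge to the same limit'' and ``$\cM_\cL(S)$ is closed'' — in particular the subsequence extraction for the transversal measures, the vanishing estimate $\Vol_k(B_n^j)/\Vol_k(C_n)\le\Vol_k(B_n^j)/\Vol_k(A_n^j)\to 0$, and the chart-gluing at the end.
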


\begin{proof}
Let $\mu_{n}\to \mu$ be a Schwartzman limit as in definition
\ref{def:Schwartzman limit}. For any flow-box $U$ we prove that
$\mu$ desintegrates as volume on leaves of $U$. Since $S$ has
controlled growth, pick a leaf and an exhaustion which satisfy the
controlled growth condition. Let
 $$
 C_n\cap U = A_n\cup B_n \, ,
 $$
be the decomposition for $C_n\cap U$ described before. The set $A_n$
is composed of a finite number of horizontal disks. We define a new
measure $\nu_n$ with support in $U$ which is the restriction of
$\mu_{n}$ to $A_n$, i.e. it is proportional to the $k$-volume on
horizontal disks. The measure $\nu_n$ desintegrates as volume on
leaves in $U$. The transversal measure is a finite sum of Dirac
measures. Moreover the controlled growth condition implies that
$(\nu_n)$ and $(\mu_{n|U})$ must converge to the same limit. But we
know that $\cM_\cL(S)$ is closed, thus the limit measure $\mu_{|U}$
desintegrates on leaves in $U$. So $\mu$ is a daval measure.
\end{proof}

For uniquely ergodic solenoids we have:

\begin{corollary}\label{cor:volume}
The volume $\mu$ of a uniquely ergodic solenoid with controlled
growth is the unique Schwartzman measure. Therefore there is only
one Schwartzman limit
 $$
 \mu=\lim_{n\to +\infty} \mu_n \, ,
 $$
which is independent of the leaf and the exhaustion.
\end{corollary}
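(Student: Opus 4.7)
The plan is to assemble Theorem \ref{thm:transverse-riemannian} (identifying probability daval measures with transversal measures up to scalar), Theorem \ref{thm:desintegration-Schwartzman-measures} (Schwartzman measures are daval under controlled growth), and the unique ergodicity hypothesis, then upgrade subsequential weak-$*$ convergence to full convergence via a standard compactness argument.

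First I would fix notation: let $\mu$ denote the unique probability daval measure on $S$. To see it exists and is unique, I would invoke unique ergodicity, which says $\overline{\cM}_\cT(S)$ consists of a single point. By the isomorphism $\overline{\cM}_\cT(S)\cong \cM_\cL(S)$ of Theorem \ref{thm:transverse-riemannian}, $\cM_\cL(S)$ is a single point; this is the probability daval measure, which we may call the volume of $S$. Next, given any Schwartzman measure $\nu\in \cM_\cS(S)$, Theorem \ref{thm:desintegration-Schwartzman-measures} (applicable since $S$ has controlled growth) gives $\nu\in \cM_\cL(S)$, whence $\nu=\mu$. Thus $\cM_\cS(S)=\{\mu\}$.

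The second step is to promote this to the stronger statement that, for every choice of leaf $l\subset S$, basepoint $x_0\in l$, and exhaustion $(C_n)$ of $l$, the associated sequence of normalized $k$-volume measures $\mu_n$ converges weakly to $\mu$. Since $S$ is compact, the space of probability measures on $S$ is weak-$*$ compact, so any subsequence of $(\mu_n)$ admits a further weak-$*$ convergent subsequence. By Definition \ref{def:Schwartzman limit}, the limit of such a subsequence lies in $\cM_\cS(S)$, and by the previous paragraph it equals $\mu$. A standard subsequence-of-subsequence argument then yields $\mu_n\to \mu$ in the full sequence. Since nothing in the argument depended on the choice of $l$, $x_0$, or $(C_n)$, the limit is independent of these choices.

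No serious obstacle is anticipated: the real work has already been done in Theorems \ref{thm:transverse-riemannian} and \ref{thm:desintegration-Schwartzman-measures}. The only minor point to be careful about is that one only obtains a priori subsequential limits from weak-$*$ compactness; the uniqueness of the Schwartzman measure is exactly what is needed to conclude that the full sequence converges.
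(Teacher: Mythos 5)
Your argument is correct and follows essentially the same chain as the paper's proof: Theorem \ref{thm:desintegration-Schwartzman-measures} to place any Schwartzman limit in $\cM_\cL(S)$, Theorem \ref{thm:transverse-riemannian} together with unique ergodicity to identify $\cM_\cL(S)$ as a single point, and then uniqueness of the subsequential limit to conclude. You are a bit more explicit than the paper about the final compactness/subsequence-of-subsequence step, but the underlying reasoning is the same.
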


\begin{proof} There are always Schwartzman limits.
Theorem \ref{thm:desintegration-Schwartzman-measures} shows that any
such limit $\mu$ desintegrates as volume on leaves. Thus the measure
$\mu$ defines the unique (up to scalars) transversal measure
$(\mu_T)$. But, conversely, the transversal measure determines the
measure $\mu$ uniquely. Therefore there is only possible limit
$\mu$, which is the volume of the uniquely ergodic solenoid.
\end{proof}

\section{Schwartzman clusters and asymptotic cycles} \label{sec:1-schwartzman}

Let $M$ be a compact  $C^\infty$ Riemannian manifold. Observe that
since $H_1(M,\RR)$ is a finite dimensional real vector space, it
comes equipped with a unique topological vector space structure.

The map $\gamma \mapsto [\gamma ]$ that associates to each loop its
homology class in $H_1(M,\ZZ) \subset H_1(M,\RR)$ is continuous when
the space of loops is endowed with the Hausdorff topology.
Therefore, by compactness, oriented rectifiable loops in $M$ of
uniformly bounded length define a bounded set in $H_1(M,\RR)$.

We have a more precise quantitative version of this result.

\begin{lemma} \label{lem:9.1}
Let $(\g_n)$ be a sequence of oriented rectifiable loops in $M$, and
$(t_n)$ be a sequence with $t_n>0$ and $t_n\to +\infty$. If
 $$
 \lim_{n\to +\infty } \frac{l(\g_n )}{t_n}=0 \, ,
 $$
then in $H_1(M,\RR)$ we have
 $$
 \lim_{n\to +\infty} \frac{[\g_n]}{t_n} =0 \, .
 $$
\end{lemma}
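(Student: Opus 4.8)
The plan is to reduce the claim to the qualitative boundedness noted just above (rectifiable loops of uniformly bounded length have homology classes lying in a bounded subset of $H_1(M,\RR)$), by cutting each $\g_n$ into order $l(\g_n)$ short arcs and closing up each arc with a path of uniformly bounded length to a fixed base point. Fix once and for all a norm $\|\cdot\|$ on the finite dimensional real vector space $H_1(M,\RR)$; the assertion to be proved is that $\|[\g_n]\|/t_n\to 0$. Put $D=\diam(M)<\infty$ and $\Lambda_0=2D+1$. By the boundedness statement recalled above there is a constant $A=A(M)>0$ such that $\|[\b]\|\le A$ for every oriented rectifiable loop $\b\subset M$ with $l(\b)\le\Lambda_0$.

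The heart of the matter is the elementary estimate
\[
\|[\g]\|\le A\,(l(\g)+2)\qquad\text{for every oriented rectifiable loop }\g\subset M.
\]
To prove it, parametrize $\g$ by arc length and fix a base point $p_0$ on $\g$. Choose consecutive points $p_0=q_0,q_1,\dots,q_k=p_0$ along $\g$ so that the subarc $\a_i$ of $\g$ from $q_i$ to $q_{i+1}$ has length $\le 1$; then $k\le\lceil l(\g)\rceil\le l(\g)+1$. Since $M$ is compact, hence complete, Hopf--Rinow provides a minimizing geodesic $\t_i$ from $p_0$ to $q_i$, of length $\le D$, and we take $\t_0$ and $\t_k$ to be constant. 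The loop $\b_i=\t_i*\a_i*\overline{\t_{i+1}}$ based at $p_0$ then has length $\le 2D+1=\Lambda_0$. Telescoping (each $\overline{\t_i}*\t_i$ being nullhomotopic rel endpoints, and $\t_0,\t_k$ trivial) shows $\g\simeq\b_0*\b_1*\cdots*\b_{k-1}$ rel $p_0$, whence $[\g]=\sum_{i=0}^{k-1}[\b_i]$ already in $H_1(M,\ZZ)$, and hence in $H_1(M,\RR)$. Therefore $\|[\g]\|\le\sum_{i=0}^{k-1}\|[\b_i]\|\le kA\le A\,(l(\g)+2)$.

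Applying this to $\g=\g_n$ gives $\|[\g_n]\|/t_n\le A\,l(\g_n)/t_n+2A/t_n$, and both summands tend to $0$ because $l(\g_n)/t_n\to 0$ by hypothesis and $t_n\to+\infty$; this is the lemma. There is no real obstacle here: the genuine inputs are the soft compactness fact about short loops (already granted) together with the compactness of $M$ (finite diameter and completeness), which keeps the closing geodesics $\t_i$ uniformly short. The only point needing a word of care is that a rectifiable loop does admit an arc-length parametrization, so the subdivision into subarcs of length at most $1$ is legitimate, and that at the level of homology the choice of base point is immaterial.
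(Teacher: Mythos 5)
Your proof is correct, but it takes a genuinely different route from the paper's. The paper argues dually: a loop $\g$ acts on $H^1(M,\RR)$ by integration, $L_\g([\o])=\int_\g\o$, and with the $C^0$-norm on forms one has directly $|L_\g([\o])|\le l(\g)\,\|\o\|_{C^0}$, hence $\|L_\g\|\le l(\g)$ in the operator norm on $H_1(M,\RR)\cong(H^1(M,\RR))^*$, from which the claim is immediate. That argument is one line and produces the clean linear bound $\|[\g]\|\le l(\g)$ (in the dual norm), consistent with the de~Rham-dual viewpoint used throughout the paper. Your argument works on the homotopy/homology side: you bootstrap the purely qualitative compactness fact (loops of uniformly bounded length have bounded image in $H_1$) into the quantitative estimate $\|[\g]\|\le A(l(\g)+2)$, by cutting $\g$ into unit-length subarcs and closing each with geodesics of length at most $\diam(M)$, then telescoping. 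This is more elementary in that it never invokes differential forms or the pairing with cohomology, at the cost of an uncontrolled constant $A$ coming from compactness rather than the sharp Lipschitz-type bound the paper gets. Both yield the lemma; the paper's proof is shorter and gives an explicit norm estimate that is reused elsewhere (cf.\ the appendix), while yours is purely metric/topological and makes explicit the geometric mechanism (short closing paths) that the rest of the section relies on. Minor remark: your final bound $kA\le A(l(\g)+2)$ has one unit of slack since $k\le l(\g)+1$; this is harmless.
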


\begin{proof}
Via the map
 $$
 \omega \mapsto \int_\g \omega \, ,
 $$
each loop $\g$ defines a linear map $L_\g$ on $H^1(M,\RR )$ that
only depends on the homology class of $\g$. We can extend this map
to $\RR \otimes H_1(M,\ZZ)$ by
 $$
 c\otimes \g \mapsto c \cdot L_\g \, .
 $$

We have the isomorphism
 $$
 H_1(M,\RR)=\RR \otimes H_1(M,\ZZ) \cong \left ( H^1(M,\RR ) \right )^*
 \, .
 $$
The Riemannian metric gives a $C^0$-norm on forms. We consider the
norm in $H^1(M,\RR )$ given as
 $$
 || [\omega]||_{C^0} =\min_{\omega \in [\omega]} ||\omega|| \, ,
 $$
and the associated operator norm in $H_1(M,\RR )\cong \left (
H^1(M,\RR ) \right )^*$.

We have
 $$
 |L_\gamma([\omega])|= \left | \int_\g \omega \right |\leq l(\g ) ||\o ||_{C^0}
 \leq l(\g ) ||[\o ] ||_{C^0} \, ,
 $$
so
 $$
 ||L_\g || \leq l(\g ) \, .
 $$
Hence $l(\g_{n})/t_n\to 0$ implies $L_{\g_{n}}/t_n \to 0$ which is
equivalent to $[\g_n]/t_n\to 0$.
\end{proof}

\begin{definition}\label{def:asymptotic-cycle} \textbf{\em
(Schwartzman asymptotic $1$-cycles)} \label{def:9.2} Let $c$ be a
parametrized continuous curve $c:{\RR}\to M$ defining an immersion
of $\RR$. For $s, t\in \RR$, $s<t$, we choose a rectifiable
oriented curve $\gamma_{s,t}$ joining $c(s)$ to $c(t)$ such that
 $$
 \lim_{t\to +\infty \atop s\to -\infty}  \frac{l(\g_{s, t} )}{t-s}=0 \
 .
 $$

The parametrized curve $c$ is a Schwartzman asymptotic $1$-cycle
if the juxtaposition of $c|_{[s,t]}$ and $\gamma_{s,t}$, denoted
$c_{s ,t}$ (which is a $1$-cycle), defines a homology class
$[c_{s,t}]\in H_1(M,{\ZZ})$ such that the limit
 \begin{equation}\label{eqn:X}
 \lim_{t\to +\infty \atop s \to -\infty} \frac{[c_{s,t}]}{t-s} \in H_1(M,{\RR})
 \end{equation}
exists.

We define the Schwartzman asymptotic homology class as
 $$
 [c]:=\lim_{t\to +\infty \atop s \to -\infty} \frac{[c_{s,t}]}{t-s} \, .
 $$
\end{definition}

Thanks to lemma \ref{lem:9.1} this definition does not depend on
the choice of the closing curves $(\g_{s,t})$. If we take another
choice $(\g'_{s,t})$, then as homology classes,
 $$
 [c_{s,t}]=[c'_{s,t}]+[\g'_{s,t}-\g_{s,t}]\, ,
 $$
and
 $$
 \frac{l(\g_{s,t}'-\g_{s,t})}{t-s}=
 \frac{l(\g_{s,t}')}{t-s}+\frac{l(\g_{s,t})}{t-s}\to 0\, ,
 $$
as $t\to \infty$, $s\to -\infty$. By lemma \ref{lem:9.1},
 $$
 \lim_{t\to +\infty \atop s \to -\infty}
 \frac{[\g_{s,t}-\g'_{s,t}]}{t-s}=0 \, ,
 $$
thus
 $$
 [c]=\lim_{t\to +\infty \atop s \to -\infty}
 \frac{[c_{s,t}]}{t-s}=\lim_{t\to +\infty \atop s \to -\infty}
 \frac{[c'_{s,t}]}{t-s} \, .
 $$

Note that we do not assume that $c(\RR )$ is an embedding of
$\RR$, i.e. $c(\RR )$ could be a loop. In that case, the
Schwartzman asymptotic homology class coincides with a scalar
multiple (the scalar depending on the parametrization) of the
integer homology class $[c(\RR)]$. This shows that the Schwartzman
homology class is a generalization to the case of immersions
$c:\RR\to M$. More precisely we have:

\begin{proposition} \label{prop:9.3}
If $c: \RR \to M$ is a loop then it is a Schwartzman asymptotic
$1$-cycle and the Schwartzman asymptotic homology class is a
scalar multiple of the homology class of the loop  $[c(\RR )]\in
H_1(M,\ZZ)$.

If $c: \RR \to M$ is a rectifiable loop with its arc-length
parametrization, and $l(c)$ is the length of the loop $c$, then
 $$
 [c]=\frac{1}{l(c)}\, [c(\RR)] \, .
 $$
\end{proposition}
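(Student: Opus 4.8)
The plan is to reduce the statement to a direct computation with the arc-length parametrization and then invoke the definition of the Schwartzman asymptotic homology class, checking that the closing curves contribute nothing in the limit. First I would observe that if $c:\RR\to M$ is a loop, then $c$ is periodic: there is a period $p>0$ with $c(s+p)=c(s)$ for all $s$, and for the arc-length parametrization we have $p=l(c)$. Fix a basepoint, say $c(0)$, and for the closing curves take the constant curve $\g_{s,t}$ at $c(0)$ whenever we restrict attention to the cofinal sequence $s=-np$, $t=np$; more honestly, to handle general $s\to-\infty$, $t\to+\infty$ I would take $\g_{s,t}$ to be a fixed short geodesic-type path returning $c(t)$ to $c(s)$, whose length is bounded independently of $s,t$ (possible since $c(\RR)$ is compact, being the image of a loop). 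Then $l(\g_{s,t})/(t-s)\to 0$ automatically, so the hypothesis in Definition \ref{def:asymptotic-cycle} on the closing curves is satisfied.

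Next I would compute $[c_{s,t}]$. Writing $t-s = N p + r$ with $N=\lfloor (t-s)/p\rfloor$ and $0\le r<p$, the cycle $c|_{[s,t]}$ is homologous to $N$ copies of the loop $c(\RR)$ plus a bounded-length arc (the leftover piece of length $r$, plus the closing curve $\g_{s,t}$). Hence, as integer homology classes,
\[
[c_{s,t}] = N\,[c(\RR)] + e_{s,t},
\]
where $e_{s,t}$ is the class of a rectifiable loop of length bounded by $p + \sup_{s,t} l(\g_{s,t})$, a constant. Dividing by $t-s$ and letting $t\to+\infty$, $s\to-\infty$: the term $e_{s,t}/(t-s)$ tends to $0$ in $H_1(M,\RR)$ by Lemma \ref{lem:9.1} (the numerator has uniformly bounded length and the denominator tends to $\infty$), while $N/(t-s)\to 1/p = 1/l(c)$. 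Therefore the limit in (\ref{eqn:X}) exists and equals $\frac{1}{l(c)}[c(\RR)]$, which is precisely the asserted formula; in particular $c$ is a Schwartzman asymptotic $1$-cycle. For the first, un-normalized statement, the same argument with an arbitrary parametrization $c$ of the loop shows the limit is a positive scalar multiple of $[c(\RR)]$, the scalar being $1/p$ where $p$ is the period of that parametrization.

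The only mildly delicate point — the "main obstacle," such as it is — is making sure the error term $e_{s,t}$ genuinely has \emph{uniformly} bounded length as $s,t$ range over all large values, rather than merely bounded for each fixed pair. This is where compactness of $c(\RR)$ is used: it lets us choose the closing paths $\g_{s,t}$ with a uniform length bound, and then the leftover fractional-period arc of $c$ also has length $<p$, so the sum is uniformly bounded and Lemma \ref{lem:9.1} applies cleanly. Everything else is bookkeeping with the Euclidean-algorithm decomposition $t-s=Np+r$ and the additivity of homology classes under juxtaposition of paths.
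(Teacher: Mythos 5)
Your proof is correct and follows essentially the same route as the paper: both extract the period $p=t_0$ of the parametrization, write $[c_{s,t}]$ as $\lfloor (t-s)/p\rfloor\,[c(\RR)]$ plus a uniformly bounded error term, and divide by $t-s$ before passing to the limit. The paper simply compresses the decomposition $t-s=Np+r$ and the bookkeeping of the leftover arc and closing curve into a single $O(1)$, which your argument unpacks explicitly.
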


\begin{proof}
Let $t_0>0$ be the minimal period of the map $c:\RR \to M$. Then
 $$
 [c_{s,t}]= \left [\frac{t-s}{t_0} \right ] [c(\RR )] + O (1) \, .
 $$
Then
 $$
 \lim_{t\to +\infty \atop s\to -\infty} \frac{[c_{s,t}]}{t-s}
 =\frac{1}{t_0} [c(\RR )] \, .
 $$
When  $c: \RR \to M$ is the arc-length parametrization of a
rectifiable loop, the period $t_0$ coincides with the length of
the loop.
\end{proof}

We will assume also in the definition of Schwartzman asymptotic
$1$-cycle that we choose $(\g_{s,t})$ such that
$l(\g_{s,t})/(t-s)\to 0$ uniformly and separately on $s$ and $t$
when $t\to +\infty$ and $s\to -\infty$. For simplicity we can
decide to choose always $\g_{s,t}$ with uniformly bounded length,
and even with $\{\g_{s,t} ; s<t\}$ contained in a compact subset
of the space of continua of $M$. Then the uniform boundedness will
hold for any Riemannian metric and the notions defined will not
depend on the Riemannian structure.

\begin{definition}\textbf{\em (Positive and negative asymptotic cycles)}
Under the assumptions of definition \ref{def:9.2}, if the limit
 \begin{equation}\label{eqn:Y}
 \lim_{t\to +\infty} \frac{[c_{s,t}]}{t-s} \in H_1(M,{\RR})
 \end{equation}
exists then it does not depend on $s$, and we say that the
parametrized curve $c$ defines a positive asympotic cycle. The
positive Schwartzman homology class is defined as
 $$
 [c_+]=\lim_{t\to +\infty} \frac{[c_{s,t}]}{t-s} \, .
 $$

The definition of negative asymptotic cycle and negative
Schwartzman homology class is the same but taking $s\to -\infty$,
 $$
 [c_-]=\lim_{s\to -\infty} \frac{[c_{s,t}]}{t-s} \, .
 $$
\end{definition}

The independence of the limit (\ref{eqn:Y}) on $s$ follows from
 $$
  \lim_{t\to +\infty} \frac{[c_{s',t}]}{t-s'} =
  \lim_{t\to +\infty} \frac{[c_{s,t}] + [c_{s',s}] +O(1)}{t-s} \cdot \frac{t-s}{t-s'} =
  \lim_{t\to +\infty} \frac{[c_{s,t}]}{t-s}
 \, .
 $$

\begin{proposition}\label{prop:criterium-Schwartzman-cycle}
A parametrized curve $c$ is a Schwartzman asymptotic $1$-cycle if
and only if it is both a positive and a negative asymptotic cycle
and
 $$
 [c_+]=[c_-] \, .
 $$
In that case we have
 $$
 [c]=[c_+]=[c_-] \, .
 $$
\end{proposition}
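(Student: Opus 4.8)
The plan is to derive this equivalence directly from the definitions, treating the three limits---the two-sided Schwartzman limit of \eqref{eqn:X}, the positive limit of \eqref{eqn:Y}, and the negative limit---as limits of the same family of vectors $[c_{s,t}]/(t-s)$ in the finite-dimensional vector space $H_1(M,\RR)$. First I would prove the ``only if'' direction: assuming the two-sided limit $[c]=\lim_{t\to+\infty,\,s\to-\infty}[c_{s,t}]/(t-s)$ exists, I want to show that for each fixed $s$ the one-variable limit $\lim_{t\to+\infty}[c_{s,t}]/(t-s)$ exists and equals $[c]$. This is essentially a statement that an iterated/restricted limit exists once the full (net) limit does: fix $s_0$, and observe that the family $\{[c_{s_0,t}]/(t-s_0) : t > s_0\}$ is a subfamily of $\{[c_{s,t}]/(t-s)\}$ obtained by specializing $s=s_0$ and letting $t\to+\infty$; since the joint limit exists, this specialization converges to the same value $[c]$. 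Symmetrically the negative limit exists and equals $[c]$ for each fixed $t$. Hence $c$ is both a positive and a negative asymptotic cycle and $[c_+]=[c_-]=[c]$.

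For the ``if'' direction, suppose $c$ is both a positive and a negative asymptotic cycle with $[c_+]=[c_-]=:v$. I must show the joint limit \eqref{eqn:X} exists and equals $v$; since the positive and negative Schwartzman classes are already shown (in the text preceding the proposition) to be independent of the frozen variable, I may use that freely. The key algebraic identity is the cocycle/additivity relation for the closing construction: for $s < s' < t$, as integer homology classes
\[
 [c_{s,t}] = [c_{s',t}] + [c_{s,s'}] + O(1),
\]
where the $O(1)$ term comes from the difference of closing curves at the splitting point and is bounded in $H_1(M,\RR)$ by Lemma~\ref{lem:9.1} (bounded-length loops give bounded homology classes). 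I would split $[c_{s,t}]/(t-s)$ accordingly. Given $\varepsilon>0$, first use the negative-cycle hypothesis to fix $s=s_\varepsilon$ (negative, large in absolute value) so that $[c_{s_\varepsilon,t}]/(t-s_\varepsilon)$ is within $\varepsilon$ of $v$ for all sufficiently large $t$; this already handles the limit for that fixed $s$. To upgrade to the genuine two-variable limit, for arbitrary $s < s_\varepsilon$ write $[c_{s,t}] = [c_{s_\varepsilon,t}] + [c_{s,s_\varepsilon}] + O(1)$ and divide by $t-s$: the first term contributes $\tfrac{t-s_\varepsilon}{t-s}\cdot\tfrac{[c_{s_\varepsilon,t}]}{t-s_\varepsilon}\to v$ as $t\to\infty$ uniformly in $s\le s_\varepsilon$ once one checks the ratio $(t-s_\varepsilon)/(t-s)\to 1$; the term $[c_{s,s_\varepsilon}]/(t-s)$ tends to $0$ because by the negative-cycle hypothesis $[c_{s,s_\varepsilon}]/(s_\varepsilon - s)$ is bounded (it converges as $s\to-\infty$) while $(s_\varepsilon-s)/(t-s)\le 1$ and $\to 0$ as $t\to\infty$; and the $O(1)/(t-s)\to 0$. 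Combining, $[c_{s,t}]/(t-s)\to v$ as $t\to+\infty$ and $s\to-\infty$, so the limit \eqref{eqn:X} exists and equals $v$. This also yields $[c]=[c_+]=[c_-]$ as claimed.

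The main obstacle is the bookkeeping in the ``if'' direction: one must be careful that the convergence in the two hypotheses is strong enough to control the doubly-indexed limit. The subtlety is that ``positive asymptotic cycle'' and ``negative asymptotic cycle'' are each single-variable statements, and a priori one could imagine the diagonal behavior being worse; the text's remark that these limits are independent of the frozen variable, together with the uniform choice of closing curves $\gamma_{s,t}$ (stipulated just before the definition of positive/negative cycles, giving $l(\gamma_{s,t})/(t-s)\to 0$ uniformly and separately in $s,t$), is exactly what makes the $O(1)$ and cross terms negligible uniformly. So I would make explicit use of that uniformity hypothesis, and the cocycle identity above, and the estimate $\|[c_{s,s'}]\|\le C(s'-s)$ coming from Lemma~\ref{lem:9.1} applied to the loop $c_{s,s'}$ of length $O(s'-s)$, to push the argument through. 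The ``only if'' direction is comparatively routine once the joint limit is taken as the hypothesis.
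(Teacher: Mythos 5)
Both directions of your proof have genuine gaps stemming from a misreading of the joint limit $\lim_{t\to+\infty,\ s\to-\infty}$.

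\textbf{``Only if'' direction.} You claim that fixing $s=s_0$ and letting $t\to+\infty$ gives a ``subfamily'' of the joint limit family, hence converges to $[c]$. This is false: the joint limit is taken over the filter of sets $\{(s,t):s<-M,\ t>M\}$, and the line $s=s_0$ is eventually \emph{outside} every such set (as soon as $M>|s_0|$). Existence of the joint limit therefore gives no direct information about the one-variable limit at fixed $s_0$. The paper's proof handles this by letting $s=s(t)\to-\infty$ \emph{slowly} (choosing $t=s^2\,l(c|_{[s,0]})$), so that the pair $(s(t),t)$ does escape through the filter; it then uses the additivity $[c_{s,t}]=[c_{s,0}]+[c_{0,t}]+O(1)$ and Lemma~\ref{lem:9.1} to show $[c_{s(t),0}]/t\to0$ and $t/(t-s(t))\to1$, isolating $\lim_{t\to+\infty}[c_{0,t}]/t=[c]$. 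The independence-on-$s$ remark after the definition then upgrades this to the positive-cycle conclusion. Your argument skips the diagonal trick entirely, and there is no general principle that would justify your shortcut.

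\textbf{``If'' direction.} You decompose $[c_{s,t}]=[c_{s_\varepsilon,t}]+[c_{s,s_\varepsilon}]+O(1)$ and then claim both that $(t-s_\varepsilon)/(t-s)\to1$ and that $[c_{s,s_\varepsilon}]/(t-s)\to0$. Neither is true in the joint limit: take $s=-t$, and $(t-s_\varepsilon)/(t-s)\to\tfrac12$ while $(s_\varepsilon-s)/(t-s)\to\tfrac12$, so the second term contributes $\tfrac12[c_-]$, not $0$. The correct observation --- which is what the paper does, with $s_\varepsilon=0$ --- is that the two weights $\a=(t-s_\varepsilon)/(t-s)$ and $\b=(s_\varepsilon-s)/(t-s)$ are nonnegative and satisfy $\a+\b=1$, so
$$
 \frac{[c_{s,t}]}{t-s}=\a\cdot\frac{[c_{s_\varepsilon,t}]}{t-s_\varepsilon}
  +\b\cdot\frac{[c_{s,s_\varepsilon}]}{s_\varepsilon-s}+\frac{O(1)}{t-s}
$$
is a convex combination of two quantities that each tend to the common value $v=[c_+]=[c_-]$; the combination therefore tends to $v$ regardless of how the weights behave. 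You have all the ingredients on the page, but the step ``the second term tends to $0$'' is wrong and must be replaced by the convex-combination observation.

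In short, the key ideas you are missing are (a) the diagonal trick in ``only if'' to link the joint limit to a one-sided limit, and (b) treating the split as a convex combination in ``if'' rather than claiming one weight vanishes.
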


\begin{proof}
If $c$ is a Schwartzman asymptotic $1$-cycle, then for $t\to +\infty$ take
$s\to -\infty$ very slowly, say satisfying the relation $t=s^2 \, l(c_{|[s,0]})$,
which defines $s=s(t)<0$ uniquely as a function of $t>0$. Then
 $$
 \begin{aligned} \
 [c]= & \lim_{t\to \infty \atop s =s(t) \to -\infty} \frac{[c_{s,t}]}{t-s} =
  \lim_{t\to +\infty} \frac{[c_{s,0}] +[c_{0,t}]+O(1)}{t-s} \\
  =&
  \lim_{t\to +\infty} \left(\frac{[c_{s,0}]+O(1)}{t}
   + \frac{[c_{0,t}]}{t} \right)\frac{t}{t-s} = \lim_{t\to +\infty}
 \frac{[c_{0,t}]}{t} \, ,
 \end{aligned}
 $$
since $\frac{t}{t-s} \to 1$ because $\frac{s}{t}\to 0$, and $\frac{[c_{s,0}]}{t}\to
0$ by lemma \ref{lem:9.1}.
So $c$ is a positive asymptotic cycle and $[c]=[c_+]$.
Analogously, $c$ is a negative asymptotic cycle and $[c]=[c_-]$.

Conversely, assume that $c$ is a positive and negative asymptotic
cycle with $[c_+]=[c_-]$. For $t$ large we have
 $$
 \frac{[c_{0,t}]}{ t}=[c_+]+o(1) \, .
 $$
For $-s$ large we have
 $$
 \frac{[c_{s,0}]}{-s}=[c_-]+o(1) \, .
 $$
Now
  $$
  \frac{[c_{s,t}]}{t-s}= \frac{-s}{t-s} \cdot\frac{[c_{s,0}]}{-s} +\frac{t}{t-s} \cdot\frac{[c_{0,t}]}{t} +
  \frac{O(1)}{t-s} = \frac{-s}{t-s} [c_+] + \frac{t}{t-s} [c_-]
  +o(1)\, .
  $$
As $[c_+]=[c_-]$, we get that this limit exists and equals
$[c]=[c_+]=[c_-]$.
\end{proof}

\begin{definition}\textbf{\em (Schwartzman clusters)}
Under the assumptions of definition \ref{def:9.2}, we can
consider, regardless of whether (\ref{eqn:X}) exists or not, all
possible limits
 \begin{equation}\label{eqn:XX}
 \lim_{n\to +\infty} \frac{[c_{s_n,t_n}]}{t_n-s_n} \in H_1(M,\RR ) \, ,
 \end{equation}
with $t_n\to +\infty$ and $s_n\to -\infty$, that is, the derived
set of $([c_{s,t}]/(t-s))_{t\to \infty,s\to -\infty}$. The limits
(\ref{eqn:XX}) are called Schwartzman asymptotic homology classes
of $c$, and they form the Schwartzman cluster of $c$,
 $$
 \cC (c)\subset H_1(M,\RR) \, .
 $$
A Schwartzman asymptotic homology class (\ref{eqn:XX}) is balanced
when the two limits
 $$
 \lim_{n\to +\infty} \frac{[c_{0,t_n}]}{t_n} \in H_1(M,\RR ) \, ,
 $$
and
 $$
 \lim_{n\to +\infty} \frac{[c_{s_n,0}]}{ -s_n} \in H_1(M,\RR ) \, ,
 $$
do exist in $H_1(M,\RR)$. We denote by $\cC_b (c)\subset \cC
(c)\subset H_1(M,\RR)$ the set of those balanced Schwartzman
asymptotic homology classes. The set $\cC_b (c)$ is named the
balanced Schwartzman cluster.

We define also the positive and negative Schwartzman clusters,
$\cC_+ (c)$ and $\cC_- (c)$, by taking only limits $t_n\to
+\infty$ and $s_n\to -\infty$ respectively.
\end{definition}

\begin{proposition}\label{prop:closed-clusters}
The Schwartzman clusters $\cC (c)$, $\cC_+ (c)$ and $\cC_- (c)$
are closed  subsets of $H_1(M,\RR)$.

If $\{[c_{s,t}]/(t-s) ; s<t\}$ is bounded in $H_1(M,\RR )$, then
the Schwartzman clusters $\cC (c)$, $\cC_+ (c)$ and $\cC_- (c)$
are non-empty, compact and connected  subsets of $H_1(M,\RR)$.
\end{proposition}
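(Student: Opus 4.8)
The plan is to handle closedness, then (under the boundedness hypothesis) non-emptiness and compactness, and finally connectedness. Throughout I fix a norm $|\cdot|$ on the finite-dimensional space $H_1(M,\RR)$ and a system of closing curves $(\gamma_{s,t})$ of uniformly bounded length --- for instance any path from $c(s)$ to $c(t)$ of length $\le\diam M$, which automatically has $l(\gamma_{s,t})/(t-s)\to0$; by Lemma \ref{lem:9.1} the clusters are unchanged if $(\gamma_{s,t})$ is replaced by another admissible system, since the corresponding values of $[c_{s,t}]/(t-s)$ then differ by $o(1)$. Two simple facts carry the argument: (i) additivity up to bounded error, $[c_{a,c'}]=[c_{a,b}]+[c_{b,c'}]+O(1)$ for $a<b<c'$, because the difference is represented by a loop of length $\le 3\diam M$, whose homology class has norm $O(1)$ (the length estimate in the proof of Lemma \ref{lem:9.1}, together with finite-dimensionality of $H_1(M,\RR)$); and (ii) under the hypothesis there is $R>0$ with $|[c_{s,t}]|\le R(t-s)$ for all $s<t$, so the whole family lies in a fixed compact $\bar F\subset H_1(M,\RR)$.

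Closedness is the general fact that a derived set is closed: if $h_j$ lies in the cluster and $h_j\to h$, express each $h_j$ as a limit of normalized sums and, by a diagonal choice, select for every $j$ one such sum --- with parameters going to $\pm\infty$ in the required way --- lying within $1/j$ of $h_j$; these converge to $h$. Under the boundedness hypothesis each cluster is a closed subset of $\bar F$, hence compact, and is non-empty because, e.g., the sequence $([c_{-n,n}]/2n)_n$ lies in $\bar F$ and so has a convergent subsequence, whose limit lies in $\cC(c)\subseteq\cC_\pm(c)$.

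The real content is connectedness, which I would reduce to the elementary principle: a sequence $(x_k)$ in a compact metric space with $d(x_k,x_{k+1})\to0$ has connected cluster set. (If the cluster set were $A\sqcup B$ with $\delta=d(A,B)>0$, then for large $k$ every $x_k$ lies within $\delta/3$ of $A\cup B$ and every step is shorter than $\delta/3$, so $x_k$ can never pass from being near $A$ to being near $B$, forcing $A$ or $B$ to be empty.) To apply this to $\cC_+(c)$, note that this cluster is insensitive to the base point $s$ (as for positive asymptotic cycles), so one may take $s=0$ and it becomes the cluster set of $x_t:=[c_{0,t}]/t$; by (i) the increment $[c_{0,n+1}]-[c_{0,n}]=[c_{n,n+1}]+O(1)$ is bounded (using (ii) for $[c_{n,n+1}]$), whence with $|[c_{0,n}]|\le Rn$ one gets $|x_{n+1}-x_n|=O(1/n)\to0$; and, again by (i) and (ii), one may replace real $t$ by integers. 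So the principle applies and gives connectedness of $\cC_+(c)$, and symmetrically of $\cC_-(c)$.

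For $\cC(c)$ I would pass to the integer lattice, $y_{m,n}:=[c_{-m,n}]/(m+n)$: the same two estimates give $|y_{m+1,n}-y_{m,n}|\to0$ and $|y_{m,n+1}-y_{m,n}|\to0$ as $m+n\to\infty$, and the cluster set of $(y_{m,n})$ as $m,n\to\infty$ is exactly $\cC(c)$. The two-parameter version of the principle finishes the proof: if the cluster set splits as $A\sqcup B$, pick $N$ beyond which, for $m,n\ge N$, all terms are within $\delta/3$ of $A\cup B$ and all unit lattice steps are shorter than $\delta/3$; two such indices whose values are near $A$ and near $B$ respectively can be joined by a monotone staircase path contained in $\{m\ge N\}\cap\{n\ge N\}$, along which consecutive values differ by less than $\delta/3$, so they cannot change from being near $A$ to being near $B$ --- a contradiction. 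The main obstacle is precisely this connectedness step: pinning down the right vanishing-increment statement, verifying via (i) and (ii) that the normalized sums really do have increments tending to zero (this is exactly where the boundedness hypothesis is used), and running the lattice/staircase form of the cluster-set lemma; the remaining assertions are routine compactness and the diagonal argument.
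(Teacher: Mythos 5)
Your proof is correct and follows the same underlying strategy as the paper's: closedness is the generic fact about derived sets, non-emptiness and compactness follow from boundedness, and connectedness comes from the observation that the normalized classes $[c_{s,t}]/(t-s)$ change by $o(1)$ under small changes of $(s,t)$, which forces the cluster set to be $\epsilon$-connected for every $\epsilon>0$ and hence connected (being compact). The paper states this oscillation/$\epsilon$-connectedness argument rather tersely; you have simply carried it out in detail via the integer-lattice discretization, the explicit increment estimates from additivity-up-to-$O(1)$ and boundedness, and the one- and two-parameter staircase versions of the vanishing-step lemma.
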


\begin{proof}
The Schwartzman cluster $\cC (c)$ is the derived set of
 $$
 ( [c_{s,t}]/(t-s) )_{t\to \infty, s\to - \infty} \, ,
 $$
in $H_1(M,\RR)$, hence closed.

Under the boundedness assumption, non-emptiness and compactness
follow. Also the oscillation of $([c_{s,t}])_{s,t}$ is bounded by
the size of $[\g_{s,t}]$. Therefore the magnitude of the
oscillation of $([c_{s,t}]/(t-s))_{s,t}$ tends to $0$ as $t\to
\infty$, $s\to -\infty$. This forces the derived set to be
connected under the boundedness assumption, since it is
$\epsilon$-connected for each $\epsilon >0$. (A compact metric
space is $\epsilon$-connected for all $\epsilon >0$ if and only if
it is connected.)

Also $\cC_+(c)$, resp.\ $\cC_-(c)$, is closed because it is the
derived set of
 $$
 ( [c_{0,t}]/t)_{t\to \infty} \, ,
 $$
resp.\  $$
 ( [c_{s,0}]/(-s))_{s\to - \infty}\, ,
 $$
in $H_1(M,\RR)$. Non-emptiness, compactness and connectedness
under the boundedness assumption follow for the cluster sets
$\cC_\pm (c)$ in the same way as for $\cC (c)$.
\end{proof}

Note that all these cluster sets may be empty if the parametrization
is too fast.

The balanced Schwartzman cluster $\cC_b (c)$ does not need to be
closed, as shown in the following counter-example.

\begin{counterexample} \label{counter}
We consider the torus $M=\TT^2$. We identify $H_1(M,\RR) \cong
\RR^2$, with $H_1(M,\ZZ )$ corresponding to the lattice $\ZZ^2
\subset \RR^2$. Consider a line $l$ in $H_1(M, \RR^2)$ of
irrational slope passing through the origin, $y=\sqrt{2}\  x$ for
example. We can find a sequence of pairs of points $(a_n,b_n)\in
\ZZ^2 \times \ZZ^2$ in the open lower half plane $H_l$ determined
by the line $l$, such that the sequence of segments $[a_n, b_n]$
do converge to the line $l$, and the middle point $(a_n+b_n)/2 \to
0$ (this is an easy exercise in diophantine approximation). We
assume that the first coordinate of $b_n$ tends to $+\infty$, and
the first coordinate of $a_n$ tends to $-\infty$. Now we can
construct a parametrized curve $c$ on $\TT^2$ such that for all
$n\geq 1$ there are an infinite number of times $t_{n,i}\to
+\infty$ with $[c_{0,t_{n,i}}]/t_{n,i}=b_n$, and for an infinite
number of times $s_{n,i}\to -\infty$,
$[c_{s_{n,i},0}]/(-s_{n,i})=a_n$. Thus in homology the curve $c$
oscillates wildly.  We can adjust the velocity of the
parametrization so that $-s_{n,i}=t_{n,i}$. Hence for these times
 $$
 \frac{[c_{s_{n,i},t_{n,i}}]}{t_{n,i}-s_{n,i}} =
 \frac{a_n (-s_{n,i}) + b_n(t_{n,i}) +O(1)}{t_{n,i}-s_{n,i}}
 \to \frac{a_n+b_n}2\, ,
 $$
when $i\to +\infty$, and the two ends balance each other. We have
great freedom in constructing $c$, so that we may arrange to have
always $[c_{s,t}]\subset H_l$. Then we get that $0\in \cC (c)$ and
all $(a_n+b_n)/2\in \cC_b (c)$ but $0\notin \cC_b (c)$.
\end{counterexample}

\medskip

We have that  $c$ is a Schwartzman asymptotic $1$-cycle (resp.\
positive, negative) if and only if $\cC (c)$ (resp.\ $\cC_+(c)$,
$\cC_-(c)$) is reduced to one point. In that case the Schwartzman
asymptotic $1$-cycle is balanced. The next result generalizes
proposition \ref{prop:criterium-Schwartzman-cycle}. We need first
a definition.

\begin{definition}
Let $A,B\subset V$ be subsets of a real vector space $V$. For
$a,b\in V$ the segment $[a,b]\subset V$ is the convex hull of $\{ a,
b\}$ in $V$. The additive hull of $A$ and $B$ is
 $$
 A\widehat + B=\bigcup_{a\in A \atop b\in B } [a,b] \, .
 $$
\end{definition}

\begin{proposition}\label{prop:balanced-cluster}
The Schwartzman balanced cluster $\cC_b (c)$ is contained in the
additive hull of $\cC_+(c)$ and $\cC_-(c)$
 $$
 \cC_b (c)\subset \cC_+(c)\widehat + \, \cC_-(c)\, .
 $$
Moreover, for each $a\in \cC_+(c)$ and $b\in \cC_-(c)$, we have
 $$
 \cC_b (c) \cap [a,b] \not= \emptyset \, .
 $$
\end{proposition}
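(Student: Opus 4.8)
The plan is to reduce the whole statement to the identity, already used in the proof of Proposition \ref{prop:criterium-Schwartzman-cycle}, which presents $[c_{s,t}]/(t-s)$ as a convex combination of the two one-sided averages, up to a vanishing error. For $s<0<t$ one has
$$
 [c_{s,t}] = [c_{s,0}] + [c_{0,t}] + e_{s,t},
$$
where $e_{s,t}$ is the homology class of the $1$-cycle obtained by concatenating, with suitable orientations, the three closing curves $\g_{s,t}$, $\g_{s,0}$, $\g_{0,t}$ (the curve pieces $c|_{[s,0]}$, $c|_{[0,t]}$, $c|_{[s,t]}$ combine into a null-homologous loop). This $1$-cycle has length at most $l(\g_{s,t})+l(\g_{s,0})+l(\g_{0,t})$, and since $-s\leq t-s$ and $t\leq t-s$, the sublinear-growth hypothesis on the closing curves (imposed after Definition \ref{def:9.2}) together with Lemma \ref{lem:9.1} give $e_{s,t}/(t-s)\to 0$ as $t\to+\infty$, $s\to-\infty$. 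Writing $\l_{s,t}=\frac{-s}{t-s}\in(0,1)$, so that $\frac{t}{t-s}=1-\l_{s,t}$, and dividing by $t-s$, one obtains
\begin{equation}\label{eqn:convex-balanced}
 \frac{[c_{s,t}]}{t-s} = \l_{s,t}\,\frac{[c_{s,0}]}{-s} + (1-\l_{s,t})\,\frac{[c_{0,t}]}{t} + o(1).
\end{equation}

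To prove $\cC_b(c)\subset\cC_+(c)\,\widehat{+}\,\cC_-(c)$, I would take $z\in\cC_b(c)$ witnessed by $s_n\to-\infty$, $t_n\to+\infty$ with $[c_{s_n,t_n}]/(t_n-s_n)\to z$, and, by the very definition of ``balanced'', with $[c_{s_n,0}]/(-s_n)\to b$ and $[c_{0,t_n}]/t_n\to a$ for some $a\in\cC_+(c)$, $b\in\cC_-(c)$. Since $\l_{s_n,t_n}\in[0,1]$, after passing to a subsequence I may assume $\l_{s_n,t_n}\to\l\in[0,1]$; then (\ref{eqn:convex-balanced}) forces $z=\l b+(1-\l)a\in[a,b]$, hence $z\in\cC_+(c)\,\widehat{+}\,\cC_-(c)$.

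For the ``moreover'', given $a\in\cC_+(c)$ and $b\in\cC_-(c)$ I would choose $t_n\to+\infty$ with $[c_{0,t_n}]/t_n\to a$ and $s_n\to-\infty$ with $[c_{s_n,0}]/(-s_n)\to b$, and pair them up: the pairs $(s_n,t_n)$ satisfy the balanced condition as $n\to+\infty$. Passing to a subsequence along which $\l_{s_n,t_n}\to\l\in[0,1]$ (sequential compactness of $[0,1]$), identity (\ref{eqn:convex-balanced}) gives $[c_{s_n,t_n}]/(t_n-s_n)\to\l b+(1-\l)a$; this limit is, by construction, a balanced Schwartzman asymptotic homology class, so it lies in $\cC_b(c)$, and it also lies in $[a,b]$. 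Thus $\cC_b(c)\cap[a,b]\neq\emptyset$.

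The only genuinely delicate point is the estimate $e_{s,t}/(t-s)\to 0$, which is of the same nature as the closing-curve independence argument in the paragraph following Definition \ref{def:9.2} (and underlies the $O(1)$ term in the proof of Proposition \ref{prop:criterium-Schwartzman-cycle}); once it is in place, the rest is bookkeeping with convex combinations plus a single invocation of compactness of $[0,1]$.
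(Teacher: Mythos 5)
Your proof is correct and follows essentially the same route as the paper: both proofs rest on the decomposition
$$
 \frac{[c_{s,t}]}{t-s}=\frac{-s}{t-s}\cdot\frac{[c_{s,0}]}{-s}+\frac{t}{t-s}\cdot\frac{[c_{0,t}]}{t}+o(1)\,,
$$
followed by extracting a convergent subsequence of the weights in $[0,1]$. The only distinction is that you spell out why the $o(1)$ error vanishes (the concatenated closing curves have sublinear length, so Lemma \ref{lem:9.1} applies), whereas the paper takes this for granted; this is a genuine but minor gain in rigor, not a different argument.
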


\begin{proof}
Let $x\in \cC_b (c)$,
 $$
 x=\lim_{n\to +\infty  } \frac{[c_{s_n,t_n}]}{t_n-s_n} \, .
 $$
We write
 $$
 \frac{[c_{s_n,t_n}]}{t_n-s_n}=\frac{[c_{s_n,0}]}
 {-s_n}\cdot\frac{-s_n}{t_n-s_n}
 +\frac{[c_{0,t_n}]}{t_n}\cdot\frac{t_n}{t_n-s_n} +o(1) \, ,
 $$
and the first statement follows.

For the second, consider
 $$
 a=\lim_{n\to +\infty} \frac{[c_{0,t_n}]}{t_n} \in \cC_+(c) \, ,
 $$
and
 $$
 b=\lim_{n\to +\infty} \frac{[c_{s_n,0}]}{-s_n} \in \cC_-(c) \, .
 $$
Then taking any accumulation point $\tau\in [0,1]$ of the sequence
$(t_n/(t_n-s_n))_n \subset [0,1]$ and taking subsequences in the
above formulas, we get a balanced Schwartzman homology class
 $$
 c=\tau a+(1-\tau )b \in \cC_b(c) \, .
 $$
\end{proof}

\begin{corollary} \label{cor:8.11}
If $\cC_+(c)$ and $\cC_-(c)$ are non-empty, then $\cC_b(c)$ is
non-empty, and therefore $\cC (c)$ is also non-empty.
\end{corollary}

Note that we can have $\cC_+(c)=\cC_-(c)=\emptyset$ (then $\cC_b
(c)=\emptyset$) but $\cC (c)\not= \emptyset$ (modify appropriately
counter-example \ref{counter}).

\medskip

There is one situation where we can assert that the balanced
Schwartzman cluster set is closed.

\begin{proposition} \label{prop:9.11}
If $B=\{[c_{s,t}]/(t-s); s<t\} \subset H_1(M,\RR )$ is a bounded
set, then $\cC (c)$, $\cC_+(c)$, $\cC_-(c)$ and $\cC_b (c)$ are
all compact sets. More precisely, they are all contained in the
convex hull of ${\overline B}$.
\end{proposition}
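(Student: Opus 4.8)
The plan is to treat the four cluster sets in two groups. Since $H_1(M,\RR)$ is finite dimensional and $B$ is bounded, $\overline B$ is compact and hence so is its convex hull $\operatorname{conv}(\overline B)$. For $\cC(c)$, $\cC_+(c)$ and $\cC_-(c)$, compactness is already Proposition \ref{prop:closed-clusters} under exactly the present boundedness hypothesis on $B$; I only need the convex-hull containment, which is immediate: every element of $\cC(c)$ is a limit of points of $B$, so $\cC(c)\subset\overline B$, and since $[c_{0,t}]/t\in B$ for $t>0$ and $[c_{s,0}]/(-s)\in B$ for $s<0$, also $\cC_+(c)\subset\overline B$ and $\cC_-(c)\subset\overline B$. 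Finally $\cC_b(c)\subset\cC(c)\subset\overline B\subset\operatorname{conv}(\overline B)$ (alternatively via Proposition \ref{prop:balanced-cluster}, $\cC_b(c)\subset\cC_+(c)\widehat+\cC_-(c)\subset\operatorname{conv}(\overline B)$), so $\cC_b(c)$ is bounded; the whole point is thus to show that $\cC_b(c)$ is closed.

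To prove $\cC_b(c)$ is closed I would use a diagonal extraction. Let $x_k\in\cC_b(c)$ with $x_k\to x$. By definition of the balanced cluster, for each $k$ there are sequences $t_n^k\to+\infty$ and $s_n^k\to-\infty$ as $n\to\infty$ such that $[c_{s_n^k,t_n^k}]/(t_n^k-s_n^k)\to x_k$, $[c_{0,t_n^k}]/t_n^k\to a_k$ and $[c_{s_n^k,0}]/(-s_n^k)\to b_k$ as $n\to\infty$, for suitable $a_k,b_k\in H_1(M,\RR)$. Since $a_k$ is then an accumulation value of $([c_{0,t}]/t)_{t\to+\infty}$ and $b_k$ of $([c_{s,0}]/(-s))_{s\to-\infty}$, we have $a_k\in\cC_+(c)$ and $b_k\in\cC_-(c)$. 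Using compactness of $\cC_+(c)$ and $\cC_-(c)$ — the one place where boundedness of $B$ is genuinely needed — pass to a subsequence in $k$ with $a_k\to a\in\cC_+(c)$ and $b_k\to b\in\cC_-(c)$. Now, for each $k$, choose a single index $n(k)$ so large that, setting $T_k:=t_{n(k)}^k$ and $S_k:=s_{n(k)}^k$, one has $T_k\ge k$, $-S_k\ge k$, and each of $[c_{S_k,T_k}]/(T_k-S_k)$, $[c_{0,T_k}]/T_k$, $[c_{S_k,0}]/(-S_k)$ lies within $1/k$ of $x_k$, $a_k$, $b_k$ respectively; this is possible precisely because, for the fixed $k$, all three displayed limits are genuine limits as $n\to\infty$. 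Then $T_k\to+\infty$, $S_k\to-\infty$, and by the triangle inequality together with $x_k\to x$, $a_k\to a$, $b_k\to b$ we get $[c_{S_k,T_k}]/(T_k-S_k)\to x$, $[c_{0,T_k}]/T_k\to a$, $[c_{S_k,0}]/(-S_k)\to b$. The first limit shows $x\in\cC(c)$, and the last two exhibit $x$ as balanced; hence $x\in\cC_b(c)$. So $\cC_b(c)$ is closed, and being a closed subset of the compact set $\operatorname{conv}(\overline B)$, it is compact.

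The main obstacle is exactly this diagonal step: passing from "for each $k$ there is an asymptotic witnessing double sequence" to "there is a single sequence $(S_k,T_k)$ witnessing the balanced class $x$." It succeeds because the three defining conditions for a balanced class are honest limits in $n$ for each fixed $k$, so finitely many $1/k$-constraints can be met by one choice of $n(k)$, and because compactness of $\cC_\pm(c)$ produces the limiting values $a,b$ back inside $\cC_\pm(c)$. The indispensability of the boundedness of $B$ at this point is consistent with Counter-example \ref{counter}, where $\cC_+(c)$ is unbounded and $\cC_b(c)$ is not closed.
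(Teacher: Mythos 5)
Your proof is correct, and it takes a genuinely different route to the closedness of $\cC_b(c)$ than the paper does. Where you fix a sequence $x_k\in\cC_b(c)$ with $x_k\to x$ and run a diagonal extraction over the witnessing double sequences of the $x_k$ (using compactness of $\cC_\pm(c)$ to produce limit points $a,b$), the paper instead observes that $x$, being in the closed set $\cC(c)$, already has a single witnessing sequence $(s_n,t_n)$, and then passes to a subsequence of \emph{that} sequence along which $[c_{s_n,0}]/(-s_n)$, $[c_{0,t_n}]/t_n$, and $t_n/(t_n-s_n)$ all converge (possible exactly because $B$ is bounded); this exhibits $x$ directly as balanced. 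The paper's argument is shorter and in fact yields the slightly stronger conclusion that $\cC(c)=\cC_b(c)$ whenever $B$ is bounded, whereas yours only needs $\cC_b(c)$ to be closed and so does not reveal this coincidence. On the other hand, your observation that all four clusters lie in $\overline B$ itself (not merely in its convex hull) sharpens the containment claim, since $\cC_\pm(c)$ are derived sets of subnets of points of $B$ and $\cC_b(c)\subset\cC(c)\subset\overline B$ directly. The one place to be careful in your write-up is the quantifier order in the diagonal step: for each fixed $k$ the three convergences in $n$ are genuine limits, so a single $n(k)$ can simultaneously satisfy the finitely many $1/k$-constraints and the growth conditions $T_k\ge k$, $-S_k\ge k$; you state this correctly, and that is precisely what makes the argument go through.
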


\begin{proof}
Obviously $\cC (c)$, $\cC_+(c)$ and $\cC_-(c)$ are bounded as
cluster sets of bounded sets, hence compact by proposition
\ref{prop:closed-clusters}.

In order to prove that $\cC_b (c)$ is bounded, we observe that the
additive hull of bounded sets is bounded, therefore boundedness
follows from proposition \ref{prop:balanced-cluster}. We show that
$\cC_b (c)$ is closed. Since $\cC_b(c)\subset \cC (c)$ and
$\cC(c)$ is closed, any accumulation point $x$ of $\cC_b(c)$ is in
$\cC (c)$. Let
 $$
 x=\lim_{n\to +\infty } \frac{[c_{s_n,t_n}]}{t_n-s_n} \, ,
 $$
and write as before
 $$
 \frac{[c_{s_n,t_n}] }{t_n-s_n}=
 \frac{[c_{s_n,0}]}{-s_n}\cdot\frac{-s_n}{t_n-s_n} +\frac{[c_{0,t_n}] }{t_n}
 \cdot\frac{t_n}{t_n-s_n} +o(1) \, .
 $$
Note that $([c_{s_n,0}] /(-s_n) )_n$ and $([c_{0,t_n}] /t_n )_n$
stay bounded. Therefore we can extract converging subsequences and
also for the sequence $(t_n/(t_n-s_n))_n\subset [0,1]$. The limit
along these subsequences $t_{n_k}\to +\infty$ and $s_{n_k}\to
-\infty$ give the same Schwartzman homology class $x$ which turns
out to be balanced.

The final statement follows from the above proofs.
\end{proof}

The situation described in proposition \ref{prop:9.11} is indeed
quite natural. It arises each time that $M$ is a Riemannian manifold
and $c$ is an arc-length parametrization of a rectifiable curve. In
the following proposition we make use of the natural norm
$||\cdot||$ in the homology of a Riemannian manifold defined in the
\ref{sec:appendix1-norm}.

\begin{proposition}\label{prop:Riemannian-clusters}
Let $M$ be a Riemannian manifold and denote by $||\cdot||$ the
norm in homology. If $c$ is a rectifiable curve parametrized by
arc-length then the cluster sets $\cC (c)$, $\cC_+(c)$, $\cC_-(c)$
and $\cC_b (c)$ are compact subsets of $\bar B(0,1)$, the closed
ball of radius $1$ for the norm in homology.

So $\cC (c)$ and $\cC_\pm(c)$ are non-empty, compact and connected,
and $\cC_b (c)$ is non-empty and compact.
\end{proposition}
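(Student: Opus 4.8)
\textit{Proof proposal.}
The plan is to reduce the whole statement to a single length estimate and then quote the general results already established in this section. First I would fix, once and for all, closing curves $\g_{s,t}$ of uniformly bounded length, say $l(\g_{s,t})\le D$ for all $s<t$; this is possible by compactness of $M$ (cf.\ the remark following Proposition \ref{prop:9.3}), and by Lemma \ref{lem:9.1} none of the cluster sets depends on this choice. Since $c$ is parametrized by arc-length, $l(c|_{[s,t]})=t-s$, so the $1$-cycle $c_{s,t}$, being the juxtaposition of $c|_{[s,t]}$ and $\g_{s,t}$, has length $l(c_{s,t})=(t-s)+l(\g_{s,t})\le (t-s)+D$.

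Next I would record the inequality $||[z]||\le l(z)$ for every rectifiable $1$-cycle $z$, where $||\cdot||$ is the norm in homology: for $[\o]\in H^1(M,\RR)$ one has $\bigl|\int_z\o\bigr|\le l(z)\,||[\o]||_{C^0}$, exactly as in the proof of Lemma \ref{lem:9.1}, whence $||[z]||=||L_z||\le l(z)$ (this is also the defining inequality of the norm recalled in Appendix \ref{sec:appendix1-norm}). Applying this to $z=c_{s,t}$ and dividing by $t-s$ gives
$$
\frac{||[c_{s,t}]||}{t-s}\le 1+\frac{D}{t-s}\,.
$$
Hence the family $\{[c_{s,t}]/(t-s):t-s\ge 1\}$ is bounded (it lies in $\bar B(0,1+D)$), which is all that enters the cluster sets, and as $t\to+\infty$, $s\to-\infty$ the right-hand side tends to $1$; so every accumulation point of $[c_{s,t}]/(t-s)$ has norm $\le 1$. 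This gives $\cC(c)\subset\bar B(0,1)$, and, fixing $s$ (resp.\ $t$), also $\cC_+(c),\cC_-(c)\subset\bar B(0,1)$. For the balanced cluster, Proposition \ref{prop:balanced-cluster} gives $\cC_b(c)\subset\cC_+(c)\,\widehat{+}\,\cC_-(c)$, and since $\bar B(0,1)$ is convex the additive hull of two of its subsets is again contained in $\bar B(0,1)$; thus $\cC_b(c)\subset\bar B(0,1)$.

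Finally I would read off the topological assertions from results already proved: by boundedness of the relevant family together with Proposition \ref{prop:closed-clusters}, the sets $\cC(c)$, $\cC_+(c)$, $\cC_-(c)$ are non-empty, compact and connected; Corollary \ref{cor:8.11} then yields $\cC_b(c)\neq\emptyset$, and Proposition \ref{prop:9.11} that $\cC_b(c)$ is compact. I expect no real obstacle here; the only point needing a little care is the behaviour for $t-s$ small, where $\g_{s,t}$ may be long compared with $t-s$, but that range is irrelevant to the cluster sets, so one may simply restrict to $t-s$ bounded below, or, alternatively, bound accumulation points directly as above without asserting that the full family $\{[c_{s,t}]/(t-s)\}$ is bounded.
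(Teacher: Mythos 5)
Your proof is correct and follows essentially the same route as the paper: arc-length parametrization gives $l(c_{s,t})=t-s+l(\g_{s,t})$, the norm inequality $\|[c_{s,t}]\|\le l(c_{s,t})$ (Theorem \ref{thm:A.4}) yields $\|[c_{s,t}]/(t-s)\|\le 1+l(\g_{s,t})/(t-s)$, and the topological conclusions follow from Propositions \ref{prop:closed-clusters}, \ref{prop:9.11} and Corollary \ref{cor:8.11}. The only cosmetic differences are that you specialize to $l(\g_{s,t})\le D$ bounded (permitted by the convention stated after Proposition \ref{prop:9.3}) where the paper works with $l(\g_{s,t})/(t-s)\to 0$ uniformly, and your extra additive-hull step for $\cC_b(c)$ is unnecessary since $\cC_b(c)\subset\cC(c)$ by definition already gives $\cC_b(c)\subset\bar B(0,1)$.
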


\begin{proof}
Observe that we have
 $$
 l(c_{s,t})=l(c|_{[s,t]})+l(\g_{s,t})=t-s+l(\g_{s,t}) \, .
 $$
Thus
 $$
 l([c_{s,t}])\leq t-s+ l(\g_{s,t}) \, .
 $$
By theorem \ref{thm:A.4},
 $$
 || [c_{s,t}]||\leq t-s + l(\g_{s,t}) \, ,
 $$
and
 $$
 \left\| \frac{[c_{s,t}]}{t-s} \right\| \leq 1+\frac{l(\g_{s,t} )}{t-s} \, .
 $$
Since $\frac{l(\g_{s,t} )}{t-s}\to 0$ uniformly, we get that
$B=\{[c_{s,t}]/(t-s); s<t\} \subset H_1(M,\RR )$ is a bounded set.

By proposition \ref{prop:closed-clusters}, $\cC (c)$ and
$\cC_\pm(c)$ are non-empty, compact and connected.
By corollary \ref{cor:8.11}, $\cC_b (c)$ is non-empty and by
proposition \ref{prop:9.11}, it is compact.
\end{proof}

Obviously the previous notions depend heavily on the
parametrization. For a non-parametrized curve we can also define
Schwartzman cluster sets.

\begin{definition}\label{def:8.14.si}
For a non-parametrized oriented curve $c\subset M$, we define the
Schwartzman cluster $\cC (c)$ as the union of the Schwartzman
clusters for all orientation preserving parametrizations of $c$.
We define the positive $\cC_+(c)$, resp.\ negative $\cC_-(c)$,
Schwartzman cluster set as the union of all positive, resp.\
negative, Schwartzman cluster sets for all orientation preserving
parametrizations.
\end{definition}

\begin{proposition} \label{prop:8.15.si}
For an oriented curve $c \subset M$ the Schwartzman clusters $\cC
(c)$, $\cC_+(c)$ and $\cC_-(c)$ are non-empty closed cones of
$H_1(M,\RR)$. These cones are degenerate (i.e. reduced to $\{
0\}$) if and only if $\{ [c_{s,t}] ; s<t \}$ is a bounded subset
of $H_1(M, \ZZ )$.
\end{proposition}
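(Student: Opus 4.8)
The plan is to reduce the statement about the non-parametrized curve to the parametrized results already established, chiefly Proposition \ref{prop:closed-clusters} and Lemma \ref{lem:9.1}, and to exploit the way reparametrization acts on the asymptotic cycles. First I would observe that for any orientation-preserving reparametrization $\varphi$ of $c$, the closing curves can be chosen compatibly, so that $[c_{s,t}]$ is unchanged as a homology class while the normalizing denominator $t-s$ is replaced by the new parameter length. Concretely, if one speeds up a parametrization by a constant factor $\lambda>0$, then every element of $\cC(c)$, $\cC_+(c)$, $\cC_-(c)$ gets multiplied by $\lambda$; more generally, a parametrization with variable speed produces limits that are nonnegative rescalings of the ``unit-speed'' limits along suitable subsequences. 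This immediately shows each of the three sets is stable under multiplication by positive scalars, i.e.\ is a cone. Taking the arc-length parametrization (when $c$ is rectifiable; in general one localizes to rectifiable sub-arcs, or passes to a parametrization of bounded speed) and invoking Proposition \ref{prop:Riemannian-clusters}, each cone, intersected with the unit sphere, is a cluster set of a bounded family, hence closed, and the cone over a closed bounded set is closed; together with Proposition \ref{prop:closed-clusters} this gives that $\cC(c)$, $\cC_\pm(c)$ are non-empty closed cones.

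For non-emptiness there is nothing new to prove beyond what Proposition \ref{prop:closed-clusters} already gives for a fixed admissible parametrization (with $0$ always available as the limit for a sufficiently fast parametrization), so the union over all parametrizations is certainly non-empty. The substantive part is the characterization of degeneracy. For the easy direction: if $\{[c_{s,t}]\;;\;s<t\}\subset H_1(M,\ZZ)$ is bounded, then for \emph{any} orientation-preserving parametrization and any sequences $t_n\to+\infty$, $s_n\to-\infty$, the quotients $[c_{s_n,t_n}]/(t_n-s_n)$ are a bounded sequence divided by a sequence tending to $+\infty$, hence converge to $0$; thus every parametrization contributes only $\{0\}$ and $\cC(c)=\cC_+(c)=\cC_-(c)=\{0\}$.

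For the converse I would argue contrapositively: suppose $\{[c_{s,t}]\}$ is unbounded in $H_1(M,\ZZ)$, and produce a parametrization along which a nonzero asymptotic cycle appears. Choose $s_n\to-\infty$ and $t_n\to+\infty$ with $\|[c_{s_n,t_n}]\|\to\infty$; by the triangle inequality applied to the concatenation $[c_{s_n,t_n}] = [c_{s_n,0}]+[c_{0,t_n}]+O(1)$, at least one of $\|[c_{0,t_n}]\|$, $\|[c_{s_n,0}]\|$ is unbounded, say the former along a subsequence. Now reparametrize $c$ on $[0,\infty)$ so that the ``clock'' advances extremely slowly precisely on the sub-arcs realizing these large homology classes --- explicitly, choose the new parameter value at $t_n$ to be $\tau_n := \|[c_{0,t_n}]\|$ (interpolating monotonically in between), which forces $\tau_n\to\infty$ while $[c_{0,\tau_n}]/\tau_n$ has norm comparable to $1$. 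Passing to a further subsequence so that these unit-ish vectors converge in the finite-dimensional space $H_1(M,\RR)$, the limit is a nonzero element of $\cC_+(c)$, hence $\cC(c)\neq\{0\}$. The analogous construction on $(-\infty,0]$ handles the case where $\|[c_{s_n,0}]\|$ is the unbounded one, giving a nonzero element of $\cC_-(c)$.

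The main obstacle I anticipate is the bookkeeping around reparametrization: one must make sure that slowing the clock on the relevant sub-arcs does not inadvertently speed it up so much elsewhere that the complementary pieces $[c_{\tau_n,0}]$ (or the closing-curve error) come to dominate the denominator and wash the limit back to $0$. This is handled by interpolating the new parameter so that it grows at least linearly everywhere (so $t-s$ is comparable to its old value on the ``neutral'' stretches) while being slowed only on a set of sub-arcs whose total old length is negligible --- precisely the kind of estimate that Lemma \ref{lem:9.1} is designed to absorb, since the contribution of any sub-arc is controlled by its length over the (now large) denominator. A secondary point requiring care is that when $c$ is merely continuous rather than rectifiable, ``length'' must be replaced throughout by the word-length $\|[c_{s,t}]\|$ in $H_1(M,\ZZ)$ (or one restricts attention to parametrizations for which the relevant pieces are rectifiable); the homological estimates above are stated so as to use only this, so the argument goes through verbatim.
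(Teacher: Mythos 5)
Your treatment of the cone property, of non-emptiness, and of the degeneracy characterization is essentially the same as the paper's (reparametrize by a constant factor to scale, slow down to get bounded quotients, adjust the clock to lift an unbounded sequence of $[c_{s,t}]$ to a nonzero direction), though you spell the last step out in more detail than the paper does. The genuine gap is in the closedness argument. You assert that ``a parametrization with variable speed produces limits that are nonnegative rescalings of the unit-speed limits,'' and then try to get closedness by taking the cone over the arc-length cluster $\cC^g(c)$ from Proposition~\ref{prop:Riemannian-clusters}. This is false: the arc-length cluster can be strictly smaller than the set of directions in which $[c_{s,t}]$ escapes to infinity. For instance, take a curve on $\TT^2$ whose arc-length parametrization has $[c_{0,t_n}]=(n,0)$ at times $t_n=n^2$; then $[c_{0,t}]/t\to 0$ so $\cC^g_+(c)=\{0\}$, yet the reparametrization with new clock $\tau_n=n$ gives $[c'_{0,\tau_n}]/\tau_n=(1,0)$, so $(1,0)\in\cC_+(c)$. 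Hence $\cC(c)$ is \emph{not} the cone over the arc-length cluster, and ``the cone over a closed set is closed'' does not apply as you invoke it.

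Two ways to repair this. One, which is closer in spirit to what you wrote, is to replace the arc-length cluster by the set $D$ of accumulation points of $[c_{s,t}]/\|[c_{s,t}]\|$ along sequences with $s\to-\infty$, $t\to+\infty$ and $\|[c_{s,t}]\|\to\infty$: one can then show $\cC(c)=\{0\}\cup\RR_{>0}\cdot D$, and since $D$ is a derived set in the compact unit sphere it is closed, hence so is the cone. But you would need to prove both inclusions, which is essentially the same amount of work as the direct argument. The paper instead proves closedness directly by a diagonal construction: given $a_n\in\cC(c)$ realized via homeomorphisms $\psi_n$ at parameters $s_n<t_n$, it inductively picks $s_n,t_n$ so that $(t_n)$, $(\psi_n(t_n))$ increase to $+\infty$ and $(s_n)$, $(\psi_n(s_n))$ decrease to $-\infty$, then splices a single homeomorphism $\psi$ with $\psi(t_n)=\psi_n(t_n)$ and $\psi(s_n)=\psi_n(s_n)$, along which $a=\lim a_n$ is attained. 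This sidesteps any comparison with a fixed reference parametrization and is the cleaner route. A secondary, smaller point: in your contrapositive you produce a nonzero element of $\cC_+(c)$ and conclude ``hence $\cC(c)\neq\{0\}$''; since $\cC_+$ is not a priori contained in $\cC$, you should either reparametrize to make the $s$-end go to $-\infty$ slowly enough that the limit survives, or observe directly that whichever of $\{[c_{0,t}]\}$, $\{[c_{s,0}]\}$ is unbounded forces $\cC(c)\neq\{0\}$ by the same clock-adjustment.
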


\begin{proof}
We can choose the closing curves $\g_{s,t}$ only depending on
${c}(s)$ and ${c}(t)$ and not on the parameter values $s$ and $t$,
nor on the parametrization. Then the integer homology class
$[{c}_{s,t}]$ only depends on the points ${c}(s)$ and ${c}(t)$ and
not on the parametrization. Therefore, we can adjust the speed of
the parametrization so that $[{c}_{s,t}]/(t-s)$ remains in a ball
centered at $0$. This shows that $\cC (c)$ is not empty. Adjusting
the speed of the parametrization we equally get that it contains
elements that are not $0$, provided that the set $\{[c_{s,t}] ;
s<t\}$ is not bounded in $H_1(M, \ZZ )$. Certainly, if
$\{[c_{s,t}] ; s<t\}$ is bounded, all the cluster sets are reduced
to $\{ 0\}$. Observe also that if $a\in \cC (c)$ then any multiple
$\lambda a$, $\lambda>0$, belongs to $\cC (c)$, by considering the new
parametrization with velocity multiplied by $\lambda$. So $\cC
(c)$ is a cone in $H_1(M ,\RR )$.

Now we prove that $\cC (c) $ is closed. Let $a_n\in \cC (c)$ with
$a_n\to a \in H_1(M,\RR )$. For each $n$ we can choose a
parametrization of $c$, say $c^{(n)}=\tilde{c}\circ \psi_n$ (here
$\tilde{c}$ is a fixed parametrization and $\psi_n$ is an orientation
preserving homeomorphism of $\RR$), and parameters $s_n$ and $t_n$
such that $||[c_{s_n,t_n}^{(n)}]-a|| \leq 1/n$ (considering any
fixed norm in $H_1(M,\RR )$). For each $n$ we can choose $t_n$ as
large as we like, and $s_n$ negative as we like. Choose them
inductively such that $(t_n)$ and $(\psi_n(t_n))$ are both increasing
sequences converging to $+\infty$, and $(s_n)$ and $(\psi_n(s_n))$
are both decreasing sequences converging to $-\infty$. Construct a
homeomorphism $\psi$ of $\RR$ with $\psi(t_n)=\psi_n(t_n)$ and
$\psi(s_n)=\psi_n(s_n)$. It is clear that $a$ is obtained as
Schwartzman limit for the parametrization $\tilde{c}\circ \psi$ at
parameters $s_n,t_n$.

The proofs for $\cC_+(c)$ and $\cC_-(c)$ are similar.
\end{proof}

\begin{remark} \label{rem:8.16}
The image of these cluster sets in the projective space $\PP
H_1(M, \RR)$ is not necessarily connected: On the torus
$M=\TT^2=\RR^2/\ZZ^2$, choose a curve in $\RR^2$ that oscillates
between the half $y$-axis $\{ y>0\}$ and the half $x$-axis $\{
x>0\}$, remaining in a small neighborhood of these axes and being
unbounded for $t\to +\infty$,  and  being bounded when $s\to -\infty$.
Then its Schwartzman cluster consists of two lines through $0$ in
$H_1(\TT^2, \RR)\cong \RR^2$, and its projection in the projective
space consists of two distinct points.
\end{remark}

\begin{remark}\label{rem:8.16bis}
  Let $c$ be a parametrized Schwartzman asymptotic $1$-cycle, and consider the
  unparametrized oriented curve defined by $c$, denoted by $\bar c$. Assume
  that the asymptotic Schwartzman homology class is $a=[c]\neq 0$. Then
   $$
   \cC_\pm(\bar c)=\cC(\bar c)=\RR_{\geq 0}\cdot a\,,
   $$
  as a subset of $H_1(M,\RR)$. This follows since any
  parametrization of $\bar c$ is of the form $c'=c\circ \psi$, where $\psi:\RR\to \RR$
  is a positively oriented homeomorphism of $\RR$. Then
   \begin{equation} \label{eqn:esa}
   \frac{c'_{s,t}}{t-s} =\frac{c_{\psi(s),\psi(t)}}{\psi(t)-\psi(s)} \cdot
   \frac{\psi(t)-\psi(s)}{t-s}\, .
   \end{equation}
   The first term in the right hand side tends to $a$ when $t\to +\infty$, $s\to -\infty$. If the
   left hand side is to converge, then the second term in the right hand side
   stays bounded. After extracting a subsequence, it converges to some $\lambda\geq 0$.
   Hence (\ref{eqn:esa}) converges to $\lambda\, a$.
\end{remark}

\medskip

We define now the notion of asymptotically homotopic curves.

\begin{definition}\textbf{\em (Asymptotic homotopy)} \label{def:8.18}
Let $c_0, c_1: \RR \to M$ be two parametrized curves. They are
asymptotically homotopic if there exists a continuous family
$c_u$, $u\in[0,1]$, interpolating between $c_0$ and $c_1$, such
that
 $$
 c: \RR \times [0,1] \to M \, , \ c(t,u)=c_u(t)\, ,
 $$
satisfies that $\delta_t(u)=c(t,u)$, $u\in [0,1]$  is rectifiable
with
 \begin{equation} \label{eqn:star}
 l(\delta_t)=o(|t|) \, .
  \end{equation}

Two oriented curves are asymptotically homotopic if they have
orientation preserving parametrizations that are asymptotically
homotopic.
\end{definition}

\begin{proposition}\label{prop:8.19}
If $c_0$ and $c_1$ are asymptotically homotopic parametrized curves
then their cluster sets coincide:
 \begin{align*}
 \cC_\pm (c_0)&=\cC_\pm (c_1)\, , \\
 \cC_b (c_0)&=\cC_b (c_1)\, , \\
 \cC (c_0)&=\cC (c_1)\, .
 \end{align*}

If $c_0$ and $c_1$ are asymptotically homotopic oriented curves
then their cluters sets coincide:
\begin{align*}
\cC_\pm (c_0)&=\cC_\pm (c_1)\, , \\
\cC (c_0)&=\cC (c_1)\, .
\end{align*}
\end{proposition}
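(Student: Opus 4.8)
The plan is to reduce everything to the computation of homology classes of the closed curves $c_{s,t}$ and to control the difference between the two curves by a cycle whose length is $o(|t|)+o(|s|)$, so that Lemma \ref{lem:9.1} forces the normalized classes to have the same limits. First I would fix an asymptotic homotopy $c\colon\RR\times[0,1]\to M$, $c(t,u)=c_u(t)$, with $l(\delta_t)=o(|t|)$ where $\delta_t(u)=c(t,u)$. For parameters $s<t$, choose closing curves $\g^0_{s,t}$ for $c_0$ and $\g^1_{s,t}$ for $c_1$; recall from the remarks after Definition \ref{def:9.2} that the Schwartzman clusters do not depend on this choice, so I am free to pick them conveniently. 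Now compare $[c^0_{s,t}]$ and $[c^1_{s,t}]$: the difference of the two $1$-cycles, $c^1_{s,t}-c^0_{s,t}$, is homologous to the $1$-cycle $D_{s,t}$ obtained by juxtaposing $c_0|_{[s,t]}$ (reversed), $\delta_t$, $c_1|_{[s,t]}$, $\delta_s$ (reversed), together with the closing curves $\g^0_{s,t}$ and $\g^1_{s,t}$ appropriately reversed. Indeed the restriction $c|_{[s,t]\times[0,1]}$ is a singular $2$-chain whose boundary is precisely $c_1|_{[s,t]} - c_0|_{[s,t]} - \delta_t + \delta_s$, so in $H_1(M,\ZZ)$ one gets
$$
[c^1_{s,t}] - [c^0_{s,t}] = [\delta_s] - [\delta_t] + [\g^1_{s,t}] - [\g^0_{s,t}] \quad\text{as classes of the corresponding closed-up loops.}
$$

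The key estimate is then on lengths. Pick the closing curves so that $\{\g^i_{s,t}\}$ has uniformly bounded length (as allowed by the discussion following Proposition \ref{prop:9.3}), so $[\g^i_{s,t}]$ is $O(1)$. By the asymptotic homotopy hypothesis, $l(\delta_t)=o(|t|)$ and $l(\delta_s)=o(|s|)$, hence the loop $E_{s,t}$ formed by $\delta_t$, $\delta_s$ and the bounded closing arcs (joining the four relevant endpoints, which all lie in bounded sets once the $\g^i$ are chosen with uniformly bounded length — one may need to also throw in uniformly short bridges between $c_0(s)$ and $c_1(s)$, which is fine since $\delta_s$ already provides such a bridge of length $o(|s|)$) is a rectifiable loop of length $o(|t|)+o(|s|)+O(1)$. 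Therefore, whenever $t_n\to+\infty$ and $s_n\to-\infty$,
$$
\frac{l(E_{s_n,t_n})}{t_n-s_n}\longrightarrow 0,
$$
and Lemma \ref{lem:9.1} gives $[E_{s_n,t_n}]/(t_n-s_n)\to 0$ in $H_1(M,\RR)$. Consequently
$$
\frac{[c^1_{s_n,t_n}]}{t_n-s_n} - \frac{[c^0_{s_n,t_n}]}{t_n-s_n}\longrightarrow 0.
$$

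It follows immediately that the derived sets coincide: $\cC(c_0)=\cC(c_1)$, and the same argument with $s_n$ fixed (or only $t_n\to+\infty$) gives $\cC_+(c_0)=\cC_+(c_1)$ and symmetrically $\cC_-(c_0)=\cC_-(c_1)$; and since the two normalized partial sums $[c^i_{0,t_n}]/t_n$ and $[c^i_{s_n,0}]/(-s_n)$ differ by terms going to $0$ by the same reasoning, one of them converges iff the other does, whence $\cC_b(c_0)=\cC_b(c_1)$. This proves the parametrized statement. For the unparametrized statement one invokes Definition \ref{def:8.14.si}: if $\bar c_0$ and $\bar c_1$ are asymptotically homotopic oriented curves, then by definition they admit orientation-preserving parametrizations $c_0,c_1$ that are asymptotically homotopic; but more care is needed, since $\cC(\bar c)$ is the union over \emph{all} orientation-preserving reparametrizations. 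Here I would argue that an asymptotic homotopy between $c_0$ and $c_1$ can be transported along any reparametrization $\psi$ of $\RR$: the family $c_u\circ\psi$ is again an asymptotic homotopy (the new vertical curves are $\delta_{\psi(t)}$, of length $o(|\psi(t)|)$, and one checks $o(|\psi(t)|)$ relative to the new parameter $t$ is still $o(\cdot)$ along the relevant exhausting sequences — this is where I expect to have to be slightly careful, since $\psi$ could distort the growth rate). So for every reparametrization used to compute a point of $\cC(\bar c_0)$ there is a matching one for $\bar c_1$ with the same cluster, giving $\cC(\bar c_0)=\cC(\bar c_1)$ and likewise for $\cC_\pm$.

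The main obstacle is precisely this last point: the unparametrized clusters range over all orientation-preserving homeomorphisms $\psi$ of $\RR$, and the estimate $l(\delta_{\psi(t)})=o(|\psi(t)|)$ must be converted into $l(\delta_{\psi(t)})/(t-s)\to 0$ along the sequences $t_n,s_n$ that realize a given cluster point. This works because along such sequences $[c_{\psi(s_n),\psi(t_n)}]/(\psi(t_n)-\psi(s_n))$ stays bounded (by the very fact that the limit exists), and $l(\delta_{\psi(t_n)})=o(|\psi(t_n)|)$ combined with the ratio $(\psi(t_n)-\psi(s_n))/(t_n-s_n)$ staying bounded — which, after passing to a subsequence, it does whenever the reparametrized limit exists — yields the desired vanishing after normalization. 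Modulo this bookkeeping the whole argument is just Lemma \ref{lem:9.1} applied to the homotopy-difference cycle.
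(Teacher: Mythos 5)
Your proof of the parametrized statement matches the paper's: you write the same boundary relation $[c_{0,s,t}]-[c_{1,s,t}]=[\delta_s-\gamma_{1,s,t}-\delta_t+\gamma_{0,s,t}]$, control the length by $o(|s|)+o(|t|)+O(1)$, and apply Lemma~\ref{lem:9.1}. That part is fine.

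For the unparametrized statement you take a genuinely different route, and it is here that there is a gap. You propose to transport the asymptotic homotopy along an arbitrary orientation-preserving reparametrization $\psi$, i.e.\ to show that $c_0\circ\psi$ and $c_1\circ\psi$ are again asymptotically homotopic, and then quote the parametrized case. But the new vertical curves are $\delta_{\psi(t)}$, with length $o(|\psi(t)|)$, and this need not be $o(|t|)$: if $\psi$ grows, say, like $t^2$ and $l(\delta_\tau)\sim\sqrt{\tau}$, then $l(\delta_{\psi(t)})\sim|t|$ fails the definition. You sense this and offer a ``bookkeeping'' patch, asserting that along sequences $(s_n,t_n)$ for which the reparametrized limit exists, the ratio $(\psi(t_n)-\psi(s_n))/(t_n-s_n)$ stays bounded. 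That assertion is not justified and is false in general: a curve whose homology grows sublinearly in arc length can produce a nonzero cluster value only under a fast reparametrization, exactly the regime in which the ratio diverges. So the patch does not close the gap, and the proposal does not establish the equality of the unparametrized cluster sets. The paper avoids this difficulty by a different mechanism: it chooses closing arcs that depend only on the pair of endpoints, uses the asymptotic homotopy merely as a \emph{correspondence of points} $c_0(t)\mapsto c_1(t)$, and observes that the induced correspondence of closed-up cycles differs by a controlled error, so the two curves yield \emph{proportional} cluster values; since by Proposition~\ref{prop:8.15.si} the unparametrized clusters are closed cones, one then renormalizes the speed of the parametrization of $c_1$ to obtain the same value, hence equal cones. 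The cone structure is the ingredient your argument is missing: you try to recover the exact cluster value with the same $\psi$, when all that is needed (and all that is available) is a positive multiple.
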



\begin{proof}
For parametrized curves  we have
$$
[c_{0,s,t}]=[c_{1,s,t}]+[\d_s-\g_{1,s,t}-\d_t+\g_{0,s,t}] \, .
$$
The length of the displacement by the
homotopy is bounded by (\ref{eqn:star}), so
$$
l(\d_s-\g_{1,s,t}-\d_t+\g_{0,s,t})=l(\g_{1,s,t})+l(\g_{0,s,t})+o(|t|+|s|)
\, ,
$$
thus
 $$
 \frac{[c_{0,s,t}]}{t-s}=\frac{[c_{1,s,t}]}{t-s} +o(1) \, .
 $$

For non-parametrized curves, the homotopy between two particular
parametrizations yields a one-to-one correspondence between
points in the curves
 $$
 c_0(t)\mapsto c_1(t) \, .
 $$
Using this correspondence, we have a correspondence of pairs of
points $(a,b)=(c_0(s),c_0(t))$ with pairs of points
$(a',b')=(c_1(s),c_1(t))$. Thus if the sequence of pairs of points
$(a_n,b_n)$ gives a cluster value for $c_0$, then the
corresponding sequence $(a'_n,b'_n)$ gives a proportional cluster
value, since (with obvious notation)
 $$
 [c_{0,a_n,b_n}]=[c_{1,a'_n,b'_n}] +O(1) \, .
 $$
So we can always normalize the speed of the parametrization of
$c_1$ in order to assure that the limit value is the same. This
proves that the clusters sets coincide.
\end{proof}

\section{Calibrating functions}\label{sec:calibrating}

Let $M$ be a $C^\infty$ smooth compact manifold. We define now the
notion of calibrating function.

Let $\pi:\tilde{M}\to M$ be the universal cover of $M$ and let
$\Gamma$ be the group of deck transformations of the cover.

Fix a point $\tilde{x}_0\in \tilde{M}$ and $x_0=\pi(\tilde{x}_0)$.
There is a faithful and transitive action of $\Gamma$ in the fiber
$\pi^{-1}(x_0)$ induced by the action of $\Gamma$ in $\tilde {M}$,
and we have a group isomorphism $\Gamma \cong \pi_1(M,x_0)$. Thus
from the group homomorphism
 $$
 \pi_1(M,x_0)\to H_1(M,{\ZZ}) \, ,
 $$
we get a group homomorphism
 $$
 \rho:\Gamma \to H_1(M,{\ZZ}) \, .
 $$

\begin{definition} \textbf{\em (Calibrating function)}
\label{def:calibrating} A map $\Phi: \tilde{M} \to H_1(M,{\RR})$
is a calibrating function if the diagram
 $$
 \begin{array}{ccc}
  \Gamma\cong\pi_1(M,x_0 )  & \hookrightarrow  & \tilde{M}  \quad \\
   \rho \downarrow  & & \downarrow \Phi\\
   H_1(M,{\ZZ}) & \to & H_1(M,{\RR})
 \end{array}
 $$
is commutative and $\Phi$ is equivariant for the action of
$\Gamma$ on $\tilde {M}$, i.e. for any $g\in \Gamma$ and $\tilde
x\in \tilde {M}$,
 $$
 \Phi (g\cdot \tilde x)=\Phi (\tilde x )+\rho (g) \, .
 $$

 If $\tilde x_0 \in \tilde M$ we say that the calibrating function
 $\Phi$ is associated to $\tilde x_0$ if $\Phi (\tilde x_0 )=0$.
\end{definition}

\begin{proposition}\label{prop:calibrating-existence}
There are smooth calibrating functions associated to any point
$\tilde x_0 \in\tilde M$.
\end{proposition}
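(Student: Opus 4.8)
The plan is to build $\Phi$ by hand using a smooth "developing map" for $1$-forms. First I would choose a finite-dimensional space of closed $1$-forms representing $H^1(M,\RR)$: pick closed $1$-forms $\omega_1,\dots,\omega_b$ on $M$ whose classes form a basis of $H^1(M,\RR)$, and let $\eta_1,\dots,\eta_b\in H_1(M,\RR)$ be the dual basis. For a point $\tilde x\in\tilde M$, let $\tilde\gamma$ be any path in $\tilde M$ from $\tilde x_0$ to $\tilde x$; its projection $\gamma=\pi\circ\tilde\gamma$ is a path in $M$ from $x_0$ to $\pi(\tilde x)$. Define
 $$
 \Phi(\tilde x)=\sum_{j=1}^b\left(\int_{\tilde\gamma}\pi^*\omega_j\right)\eta_j\in H_1(M,\RR)\,.
 $$
Since $\tilde M$ is simply connected and each $\pi^*\omega_j$ is closed, the integral is independent of the choice of $\tilde\gamma$, so $\Phi$ is well-defined; it is smooth because locally it is an honest antiderivative of the smooth forms $\pi^*\omega_j$. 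By construction $\Phi(\tilde x_0)=0$, so $\Phi$ is associated to $\tilde x_0$.

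Next I would check the two defining properties. For equivariance, take $g\in\Gamma$ and $\tilde x\in\tilde M$. Concatenate a path from $\tilde x_0$ to $\tilde x$ with a path from $\tilde x$ to $g\cdot\tilde x$; the latter projects to a loop in $M$ based at $\pi(\tilde x)$ representing a homology class, and after base-point transport this class is exactly $\rho(g)$ (this is the standard identification $\Gamma\cong\pi_1(M,x_0)\to H_1(M,\ZZ)$, using that $H_1$ kills the base-point ambiguity). Pairing against the $\omega_j$ and using that $\{\eta_j\}$ is dual to $\{[\omega_j]\}$ gives $\Phi(g\cdot\tilde x)=\Phi(\tilde x)+\rho(g)$. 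The commutativity of the diagram is the special case $\tilde x=\tilde x_0$: then $\tilde\gamma$ can be taken to be the path realizing the inclusion $\Gamma\hookrightarrow\tilde M$ (i.e.\ $\tilde x_0\mapsto g\cdot\tilde x_0$), and the same computation yields $\Phi(g\cdot\tilde x_0)=\rho(g)$, which is precisely the assertion that the square commutes.

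The only genuinely delicate point is bookkeeping the base-point identifications: one must verify that the loop obtained by projecting a path from $\tilde x$ to $g\cdot\tilde x$ represents, under the canonical isomorphism $H_1$ (which is insensitive to the base point), the element $\rho(g)\in H_1(M,\ZZ)$ — independently of $\tilde x$. This is routine covering-space theory: the path from $\tilde x_0$ to $\tilde x$ conjugates the relevant loop, and conjugation acts trivially on $H_1$. Everything else — well-definedness, smoothness, the normalization $\Phi(\tilde x_0)=0$ — is immediate from the construction. Hence smooth calibrating functions associated to any $\tilde x_0$ exist.
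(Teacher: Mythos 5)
Your construction is correct and genuinely different from the paper's. You build $\Phi$ by a De Rham argument: choose closed $1$-forms $\omega_1,\dots,\omega_b$ representing a basis of $H^1(M,\RR)$ with dual basis $\eta_j\in H_1(M,\RR)$, pull them back to the (simply connected) universal cover, and take
$\Phi(\tilde x)=\sum_j\bigl(\int_{\tilde x_0}^{\tilde x}\pi^*\omega_j\bigr)\eta_j$. Since $\pi^*\omega_j$ is $\Gamma$-invariant, concatenating a path from $\tilde x_0$ to $g\cdot\tilde x_0$ with $g\cdot\tilde\gamma$ shows that $\Phi(g\cdot\tilde x)-\Phi(\tilde x)=\sum_j\la[\omega_j],\rho(g)\ra\eta_j=\rho(g)$, which gives both the equivariance and, at $\tilde x=\tilde x_0$, the commuting square. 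Smoothness is immediate since each $\pi^*\omega_j$ is an exact smooth $1$-form on $\tilde M$. The paper instead takes a bump function $\varphi$ with $\pi(\{\varphi>0\})=M$, forms the $\Gamma$-translates, normalizes to get a $\Gamma$-equivariant partition of unity $\{\psi_g\}$, and sets $\Phi=\sum_{g}\psi_g\,\rho(g)$. The two routes buy slightly different things: your De Rham approach is conceptually cleaner, canonical up to the choice of basis, and if the $\omega_j$ are chosen harmonic it even produces the harmonic calibrating function; the paper's partition-of-unity averaging makes no use of the smooth or De Rham structure in an essential way (only of the existence of a suitable bump function) and so adapts more readily to lower regularity and to the later remark about rapidly decaying weights. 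Both yield smooth calibrating functions associated to $\tilde x_0$.
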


\begin{proof}
Fix a smooth  non-negative function $\varphi:\tilde{M}\to {\RR}$
with compact support $K={\overline U}$ with $U=\{\varphi >0\}$
such that $\pi(U)=M$. Moreover, we can request that $U\cap
\pi^{-1}(x_0)=\{\tilde{x}_0\}$.

For any $g_0\in\Gamma$, define $\varphi_{g_0}(\tilde{x})=
\varphi(g_0^{-1} \cdot \tilde{x})$. The support of $\varphi_{g_0}$
is $g_0\, K$, and $(g_0\, K)_{g_0\in \Gamma}$  is a locally finite
covering of $\tilde{M}$, as follows from the compactness of $K$.
Set
 $$
 \psi_{g_0}(\tilde{x}) := \frac{\varphi_{g_0}(\tilde{x})}{\sum_{g\in
 \Gamma} \varphi_{g}(\tilde{x})} \ \, .
 $$
Then $\psi_{g_0}(\tilde{x})=\psi_e(g_0^{-1}\cdot\tilde{x})$ and
 $$
 \sum_{g\in \Gamma} \psi_g \equiv 1 \, .
 $$
Also $\psi_{g_0}$ has compact support $g_0\, K$, and it is a smooth
function since the denominator is strictly positive (because $\pi
(U)=M$) and it is at each point a finite sum of smooth
functions.

We define the map
 $$
 \Phi: \tilde{M} \to H_1(M,{\RR}) \ \, ,
 $$
by
 $$
 \Phi(\tilde{x})= \sum_{g\in \Gamma} \psi_g(\tilde{x})\, \rho(g) \, .
 $$
We check that $\Phi$ is a calibrating function:
 $$
 \begin{aligned}
 \Phi(g\cdot \tilde{x}) &= \sum_{h\in \Gamma}\psi_h(g\cdot\tilde{x})\,\rho(h)\\
    &= \sum_{h\in \Gamma}\psi_{g^{-1}h}(\tilde{x})\,(\rho(g)+\rho(g^{-1}h)) \\
    &= \sum_{h'\in \Gamma}\psi_{h'}(\tilde{x})\,\rho(g) \  + \
       \sum_{h'\in \Gamma}\psi_{h'}(\tilde{x})\, \rho(h') \\
    &= \rho(g)+ \Phi(\tilde{x})\, .
 \end{aligned}
 $$
Notice that by construction $\Phi (\tilde x_0)=0$.
\end{proof}

We note also that choosing a function $\phi$ of rapid decay, we
may do a similar construction, as long as $\sum_{g\in\Gamma}
\phi_g$ is summable (we may need to add a translation to $\Phi$ in
order to ensure $\Phi (\tilde x_0)=0$).

Observe that the calibrating property implies that for a curve
$\gamma:[a,b]\to M$, the quantity
$\Phi(\tilde{\gamma}(b))-\Phi(\tilde{\gamma}(a))$ does not depend
on the lift $\tilde \gamma$ of $\gamma$, because for another
choice $\tilde \gamma '$, we would have for some $g\in \Gamma$,
 $$
 \tilde \gamma'(a) =g\cdot\tilde \gamma (a) \, ,
 $$
and
 $$
 \tilde \gamma'(b) =g\cdot\tilde \gamma (b) \, .
 $$
Therefore
 $$
 \Phi(\tilde{\gamma}'(b))-\Phi(\tilde{\gamma}'(a))
 =\Phi(g\cdot\tilde{\gamma}(b))-\Phi(g\cdot\tilde{\gamma}(a))
 =\Phi(\tilde{\gamma}(b))-\Phi(\tilde{\gamma}(a)) \, .
 $$

This justifies the next definition.

\begin{definition}
Given a calibrating function $\Phi$, for any curve $\gamma:[a,b]\to
M$, we define
$\Phi(\gamma):=\Phi(\tilde{\gamma}(b))-\Phi(\tilde{\gamma}(a))$ for
any lift $\tilde \gamma$ of $\gamma$.
\end{definition}

\begin{proposition} \label{def:8.14}
For any loop $\gamma \subset M$ we have
 $$
 \Phi(\gamma)=[\gamma]\in H_1(M,{\ZZ})\, .
 $$
\end{proposition}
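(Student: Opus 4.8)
The plan is to reduce the statement to a direct computation using the construction of $\Phi$ in Proposition~\ref{prop:calibrating-existence}, exploiting the fact that the calibrating property only constrains $\Phi$ up to a curve-independent choice. First I would observe that the quantity $\Phi(\gamma)$ is independent of the choice of calibrating function $\Phi$ associated to a given basepoint: if $\Phi'$ is another calibrating function, then $\Phi - \Phi'$ is $\Gamma$-invariant (the cocycle $\rho(g)$ cancels in the equivariance relation), hence descends to a map $M \to H_1(M,\RR)$, and for a loop $\gamma$ the value $(\Phi-\Phi')(\tilde\gamma(b)) - (\Phi-\Phi')(\tilde\gamma(a))$ vanishes because $\pi(\tilde\gamma(b)) = \pi(\tilde\gamma(a))$. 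So it suffices to verify the identity for the specific $\Phi$ built from a partition of unity.

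Next, let $\gamma:[a,b]\to M$ be a loop, so $\gamma(a)=\gamma(b)=:p$, and let $g\in\Gamma$ be the deck transformation with $\tilde\gamma(b) = g\cdot\tilde\gamma(a)$; by the standard isomorphism $\Gamma\cong\pi_1(M,p)$ this $g$ corresponds to the based homotopy class of $\gamma$, and therefore $\rho(g) = [\gamma]\in H_1(M,\ZZ)$ by definition of $\rho$ as the composite $\Gamma\cong\pi_1(M,x_0)\to H_1(M,\ZZ)$ (one should note here that the based-homotopy class is independent of the choice of path from $x_0$ to $p$ precisely because $H_1$ is abelian, so $\rho(g)$ is well-defined as claimed). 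Then using the equivariance $\Phi(g\cdot\tilde x) = \Phi(\tilde x) + \rho(g)$ with $\tilde x = \tilde\gamma(a)$ we compute
$$
\Phi(\gamma) = \Phi(\tilde\gamma(b)) - \Phi(\tilde\gamma(a)) = \Phi(g\cdot\tilde\gamma(a)) - \Phi(\tilde\gamma(a)) = \rho(g) = [\gamma]\,.
$$

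I expect the only genuine subtlety to be bookkeeping the identification $\Gamma\cong\pi_1(M,x_0)$ and checking that the $g$ attached to the lift of $\gamma$ really is the element whose image under $\rho$ is $[\gamma]$ — in particular that this is insensitive to the basepoint and to the choice of lift (the latter already handled by the paragraph preceding the definition of $\Phi(\gamma)$ in the excerpt). Everything else is a two-line application of the equivariance property, so there is no real analytic or geometric obstacle; the main point is simply to phrase the compatibility of the deck-transformation action with the homology map correctly.
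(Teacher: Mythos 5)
Your proof is correct. The core step (apply the equivariance relation $\Phi(g\cdot\tilde x)=\Phi(\tilde x)+\rho(g)$ to a lift of the loop) is exactly the paper's, so the approaches are essentially the same. The difference is in the handling of the basepoint: the paper modifies $\gamma$ (without changing $\Phi(\gamma)$ or $[\gamma]$) so that it passes through $x_0$ and then invokes $\Gamma\cong\pi_1(M,x_0)$ directly, whereas you keep $\gamma$ based at $p=\gamma(a)$ and observe that the identification $\Gamma\cong\pi_1(M,p)$ (via the lift $\tilde\gamma(a)$) and the identification $\Gamma\cong\pi_1(M,x_0)$ used to define $\rho$ differ by an inner automorphism, which dies in $H_1$. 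Both are fine; your version is arguably cleaner since it makes the needed independence explicit rather than silently absorbing it into a ``modify $\gamma$'' step. One remark: your opening paragraph, reducing to the partition-of-unity model for $\Phi$, is logically sound but entirely unnecessary — the final computation uses only the defining equivariance of an arbitrary calibrating function and never invokes the specific construction — so it could be deleted without loss.
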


\begin{proof}
Modifying $\gamma$, but without changing its endpoints nor $\Phi
(\gamma)$ nor $[\gamma ]$, we can assume that $x_0\in \gamma$.
Since $\Gamma \cong \pi_1(M,x_0)$, let $h_0\in\Gamma$ be
the element corresponding to $\gamma$. Then $\gamma$ lifts to a curve
joining $\tilde x_0$ to $h_0\cdot \tilde x_0$, and
 $$
 \Phi(\gamma)=\Phi(h_0 \cdot \tilde{x}_0)-
 \Phi (\tilde x_0)
 =\rho(h_0)=[\gamma]\in H_1(M,{\ZZ})\, .
 $$
\end{proof}

\begin{proposition} \label{prop:8.15}
We assume that $M$ is endowed with a Riemannian metric and that the
calibrating function $\Phi$ is smooth. Then for any rectifiable
curve $\gamma$ we have
$$
|\Phi(\gamma)|\leq C \cdot l(\gamma) \, ,
$$
where $l(\gamma )$ is the length of $\gamma$, and $C>0$ is a
positive constant depending only on the metric.
\end{proposition}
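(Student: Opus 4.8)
The plan is to differentiate $\Phi$ and push the resulting vector-valued $1$-form down to $M$, where compactness produces the constant $C$. Since $\Phi:\tilde{M}\to H_1(M,\RR)$ is smooth and $H_1(M,\RR)$ is a fixed finite-dimensional real vector space, the differential $d\Phi$ is a smooth $H_1(M,\RR)$-valued $1$-form on $\tilde{M}$. Differentiating the equivariance relation $\Phi(g\cdot\tilde x)=\Phi(\tilde x)+\rho(g)$ of Definition \ref{def:calibrating} in the variable $\tilde x$ (the term $\rho(g)$ being constant) gives $g^*(d\Phi)=d\Phi$ for every $g\in\Gamma$. Hence $d\Phi$ is $\Gamma$-invariant and descends to a smooth $H_1(M,\RR)$-valued $1$-form $\bar\omega$ on $M=\tilde{M}/\Gamma$, characterized by $\pi^*\bar\omega=d\Phi$.

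Next I would fix any norm on the finite-dimensional space $H_1(M,\RR)$ and, using the Riemannian metric, form the pointwise operator norm $\|\bar\omega_x\|$ of the linear map $\bar\omega_x:T_xM\to H_1(M,\RR)$. This depends continuously on $x$, so by compactness of $M$ it attains a finite maximum $C:=\max_{x\in M}\|\bar\omega_x\|<\infty$, which depends only on the metric; changing the norm on $H_1(M,\RR)$ only rescales $C$, by equivalence of norms in finite dimension.

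Then, for a rectifiable curve $\gamma$, parametrize it by arc length on an interval $[0,L]$ with $L=l(\gamma)$, so that $\gamma$ is $1$-Lipschitz; giving $\tilde{M}$ the pullback metric makes $\pi$ a local isometry, so any lift $\tilde\gamma:[0,L]\to\tilde{M}$ is $1$-Lipschitz as well. The composition $\Phi\circ\tilde\gamma:[0,L]\to H_1(M,\RR)$ is therefore Lipschitz, hence absolutely continuous, with $\frac{d}{ds}(\Phi\circ\tilde\gamma)(s)=\bar\omega_{\gamma(s)}(\gamma'(s))$ for almost every $s$. Recalling that $\Phi(\gamma)=\Phi(\tilde\gamma(L))-\Phi(\tilde\gamma(0))$ is independent of the chosen lift, I get
$$
|\Phi(\gamma)|=\left|\int_0^L\frac{d}{ds}(\Phi\circ\tilde\gamma)(s)\,ds\right|\le\int_0^L\|\bar\omega_{\gamma(s)}\|\,|\gamma'(s)|\,ds\le C\int_0^L|\gamma'(s)|\,ds=C\,l(\gamma)\,,
$$
using $|\gamma'(s)|=1$ almost everywhere for the arc-length parametrization. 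This is the asserted inequality.

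The only delicate step is the passage from smooth to merely rectifiable curves, i.e.\ justifying the fundamental theorem of calculus for the Lipschitz map $\Phi\circ\tilde\gamma$. If one prefers to bypass this, I would instead approximate $\gamma$ in the $C^0$ topology by smooth curves $\gamma_n$ with the same endpoints and $l(\gamma_n)\to l(\gamma)$, apply the elementary estimate $|\Phi(\gamma_n)|=\bigl|\int_{\tilde\gamma_n}d\Phi\bigr|\le C\,l(\gamma_n)$ valid for smooth curves, and pass to the limit, using that $\Phi(\gamma)$ depends continuously on the endpoints of the lift $\tilde\gamma$.
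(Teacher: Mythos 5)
Your proof is correct and follows essentially the same route as the paper's: the paper's (very terse) argument also rests on the observation that $\Phi$ is smooth and $\Gamma$-equivariant, hence its derivative is bounded, from which the Lipschitz estimate follows. You have simply made explicit what the paper leaves implicit — that $d\Phi$ is $\Gamma$-invariant and descends to a form on the compact $M$, and that the passage to rectifiable curves needs either absolute continuity of $\Phi\circ\tilde\gamma$ or a smooth approximation argument.
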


\begin{proof}
The calibrating function $\Phi$ is a smooth function on $\tilde{M}$
and $\Gamma$-equivariant, hence it is bounded as well as its
derivatives. The result follows.
\end{proof}

\medskip

\begin{example}
For $M=\TT$, $\tilde{M}={\RR}$, $H_1(M,\ZZ)=\ZZ\subset \RR
=H_1(M,\RR)$, $\Gamma =\ZZ$ and $\rho : \Gamma \to H_1(M,\ZZ)$ is
given (with these identifications) by $\rho (n)=n$. We can take
$\varphi(x)= |1-x|$, for $x\in [-1,1]$, and $\varphi (x)=0$
elsewhere. Then
 $$
 \sum_{n=-\infty}^\infty \varphi(x-n)=1 \, ,
 $$
and
 $$
 \psi_n(x)=\varphi_n(x)=\varphi (x-n) \, .
 $$
Therefore we get the calibrating function
 $$
 \Phi (x)= \sum_{n=-\infty}^\infty \varphi(x-n) \, n= x \,  .
 $$
It is a smooth calibrating function (despite that $\varphi$ is not).

A similar construction works for higher dimensional tori.
\end{example}

\begin{proposition}\label{prop:asymptotic}
Let $c:{\RR}\to M$ be a $C^1$ curve. Consider two sequences $(s_n)$
and $(t_n)$ such that $s_n<t_n$, $s_n \to -\infty$, and $t_n\to
+\infty$.

Then the following conditions are equivalent:
\begin{enumerate}
 \item The limit
 $$
[c]=\lim_{n\to +\infty} \frac{[c_{s_n , t_n}]}{t_n-s_n} \in
H_1(M,{\RR})
 $$
 exists.

\medskip

 \item The limit
  $$
  [c]_{\Phi}=\lim_{n\to \infty} \frac{\Phi(c_{|[s_n,t_n]}) }{t_n-s_n}   \in H_1(M,{\RR})
  $$
  exists.

\medskip

 \item For any closed $1$-form $\alpha \in \Omega^1(M)$, the
  limit
  $$
  [c](\alpha )=\lim_{n\to \infty} \frac{1}{t_n-s_n}\int_{c([s_n , t_n])} \alpha
   $$
  exists.

\medskip

\item For any cohomology class $[\alpha ]\in H^1(M,\RR )$, the limit
  $$
  [c][\alpha ]=\lim_{n\to \infty} \frac{1}{t_n-s_n}\int_{c([s_n , t_n])} \alpha
  $$
exists, and does not depend on the closed  $1$-form $\alpha \in
\Omega^1(M)$ representing the cohomology class.

\medskip

\item For any continuous map $f:M\to \TT$, let $\widetilde{f\circ
c}:{\RR} \to {\RR}$ be a lift of $f\circ c$,  the limit
 $$
 \rho (f)=\lim_{n\to +\infty} \frac{\widetilde{f\circ c}(t_n) -
 \widetilde{f\circ c}(s_n)}{t_n-s_n}
 $$
exists.

\medskip

\item For any (two-sided, embedded, transversally oriented)
hypersurface $H\subset M$ such that all intersections
$c({\RR})\cap H$ are transverse, the limit
  $$
  [c]\cdot [H]=\lim_{n\to \infty}  \frac{\# \{u \in [s_n,t_n]\ ; \
  c(u)\in H\}}{t_n-s_n}
  $$
exists. The notation $\#$ means a signed count of intersection
points.

\end{enumerate}

When these conditions hold, we have $[c]=[c]_{\Phi}$ for any
calibrating function $\Phi$. If $\alpha \in \Omega^1 (M)$ is a
closed form, then $[c](\alpha)=[c][\alpha]=\la [c],[\alpha] \ra$.
If $f:M\to \TT$ is a continuous map and $a=f^*[dx]\in H^1(M,\ZZ)$
is the pull-back of the generator $[dx]\in H^1(\TT,\ZZ)$, and $H$
is a hypersurface such that $[H]$ is the Poincar\'{e} dual of $a$,
then $\la [c],a\ra= \rho (f)=[c]\cdot [H]$.
\end{proposition}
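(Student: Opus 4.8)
The plan is to prove Proposition \ref{prop:asymptotic} by establishing a cycle of implications among the six conditions, treating $[c]_\Phi$ as the central object that links the topological limit (1) to the analytic and counting versions. First I would observe that conditions (3) and (4) are essentially equivalent once we know that $\int_{c([s_n,t_n])}\alpha$ changes by a uniformly bounded amount under $\alpha\mapsto\alpha+dg$ (here $g$ is a smooth function on the compact manifold $M$, so $|g|$ is bounded and $\int_{c([s_n,t_n])}dg=g(c(t_n))-g(c(s_n))$ is bounded independently of $n$); dividing by $t_n-s_n\to\infty$ kills this term, so existence of the limit and its independence of the representative come together. Similarly, (3)$\Leftrightarrow$(4) pairs naturally with (1)$\Leftrightarrow$(2) via the closing-curve estimate: since $[c_{s_n,t_n}]=[c_{|[s_n,t_n]}]+[\g_{s_n,t_n}]$ as one-chains and $l(\g_{s_n,t_n})/(t_n-s_n)\to 0$, Lemma \ref{lem:9.1} gives that $[c_{s_n,t_n}]/(t_n-s_n)$ and the "open" version differ by something tending to $0$; and pairing the open chain against $[\alpha]$ gives exactly $\int_{c([s_n,t_n])}\alpha$, while Proposition \ref{def:8.14} and Proposition \ref{prop:8.15} (the calibrating function computes $[\gamma]$ on loops and is Lipschitz) show $\Phi(c_{|[s_n,t_n]})$ differs from $[c_{s_n,t_n}]$ by a bounded amount — closing up the curve with $\g_{s_n,t_n}$ makes it a loop, on which $\Phi$ agrees with the integral homology class, and the correction $\Phi(\g_{s_n,t_n})$ is $O(l(\g_{s_n,t_n}))=o(t_n-s_n)$. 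So (1), (2), (3), (4) are all equivalent, and when they hold the identities $[c]=[c]_\Phi$ and $[c](\alpha)=[c][\alpha]=\la[c],[\alpha]\ra$ fall out of these same comparisons.

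Next I would handle (5): a continuous map $f:M\to\TT$ is, up to homotopy, classified by $a=f^*[dx]\in H^1(M,\ZZ)$, and choosing a smooth representative we may take $\alpha=f^*(dx)$, a closed $1$-form with $[\alpha]=a$. Then $\int_{c([s_n,t_n])}\alpha=\int_{s_n}^{t_n}(f\circ c)^*(dx)=\widetilde{f\circ c}(t_n)-\widetilde{f\circ c}(s_n)$ for any lift, so condition (5) for this particular $f$ is literally condition (3) for this particular $\alpha$. The subtlety is that (5) quantifies over all $f:M\to\TT$ while a priori (3) gives a limit for each closed form separately; but since $H^1(M,\ZZ)$ is finitely generated and $H^1(M,\RR)$ is spanned by classes of the form $f^*[dx]$ (every integral class is pulled back from $\TT$ via a classifying map, and these span over $\RR$), the existence of all the limits in (5), together with linearity, forces existence of $[c][\alpha]$ for every class and hence (4); conversely (4)$\Rightarrow$(5) is immediate. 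This also yields the stated formula $\la[c],a\ra=\rho(f)$.

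Then (6): given a transversally oriented embedded hypersurface $H$ meeting $c(\RR)$ transversally, its Poincaré dual $[H]^\vee=a\in H^1(M,\ZZ)$ can be represented by a closed $1$-form $\alpha$ supported in a tubular neighborhood of $H$ with $\int_{\text{fiber}}\alpha=1$, so that $\int_{c([s_n,t_n])}\alpha$ equals the signed count $\#\{u\in[s_n,t_n]:c(u)\in H\}$ up to a bounded error coming from the (at most two) boundary crossings near $s_n$ and $t_n$ and the thickness of the neighborhood — again negligible after dividing by $t_n-s_n$. Hence (6) for $H$ is equivalent to (3) for this $\alpha$, i.e. to (4), giving $[c]\cdot[H]=\la[c],a\ra$; running this for a full set of hypersurfaces dual to a basis of $H^1(M,\ZZ)$ recovers (4) from (6). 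The main obstacle I anticipate is purely bookkeeping rather than conceptual: making the quantifier juggling precise — several conditions are "for all $\alpha$ / for all $f$ / for all $H$" while others give one limit at a time — so one must argue that existence of finitely many limits (on a basis) plus the uniform $O(1)$ error bounds propagates to all test objects, and one must be careful that the transversality and genericity hypotheses in (6) are compatible with choosing $H$ dual to prescribed basis classes. Everything else reduces to the two recurring estimates already in hand: Lemma \ref{lem:9.1} (short closing curves contribute nothing in the limit) and the Lipschitz/boundedness bounds of Propositions \ref{def:8.14}, \ref{prop:8.15}, \ref{prop:8.15} for $\Phi$ and of $|g|$, $|\!\int\! dg|$ for exact forms.
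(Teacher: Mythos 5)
Your proposal is correct, and for the bulk of the proposition---the equivalences (1)$\Leftrightarrow$(2)$\Leftrightarrow$(3)$\Leftrightarrow$(4) via Lemma \ref{lem:9.1} and the calibration estimates of Propositions \ref{def:8.14} and \ref{prop:8.15}, and (4)$\Leftrightarrow$(5) via the identification $H^1(M,\ZZ)\cong[M,\TT]$ together with smooth approximation and the observation that integral classes span $H^1(M,\RR)$---it runs along essentially the same lines as the paper. The one place you take a genuinely different route is condition (6). You link (6) directly to (3)/(4) by representing the Poincar\'e dual of $H$ by a closed $1$-form (a Thom form) supported in a tubular neighborhood, so that $\int_{c([s_n,t_n])}\alpha$ equals the signed intersection count up to an $O(1)$ boundary error. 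The paper instead routes (6) through (5): for (6)$\Rightarrow$(5) it takes $H=f^{-1}(x_0)$ for a Sard-generic regular value $x_0$ of both $f$ and $f\circ c$; for (5)$\Rightarrow$(6) it builds a continuous $f:M\to\TT$ out of a collar neighborhood of $H$ and observes that signed crossings of $H$ correspond to unit increments or decrements of $\widetilde{f\circ c}$. The two routes give the same identities; the paper's collar argument has the small advantage of applying to merely topological two-sided hypersurfaces, whereas your Thom-form argument implicitly uses that $H$ is smooth enough to carry a smooth tubular neighborhood. The transversality/genericity worry you flag is resolved exactly the way the paper does implicitly: Sard's theorem applied to a smooth representative $f$ and to $f\circ c$ (which is $C^1$ since $c$ is) supplies a transverse $H$ in any prescribed integral class, so this is bookkeeping rather than a gap.
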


\begin{proof}
The equivalence of (1) and (2) follows from the properties of
$\Phi$. Let $c:{\RR}\to M$ be a curve. Then
 $$
 \Phi(c_{|[s_n,t_n]}) = \Phi([c_{s_n,t_n}])- \Phi(\gamma_{s_n,t_n}) =[c_{s_n,t_n}]+O(l(\gamma_{s_n,t_n})) \, .
 $$
Dividing by $t_n-s_n$ and passing to the limit the equivalence of
(1) and (2) follows.

\medskip

We prove that (1) is equivalent to (3). First note that
 $$
  \left| \int_{\gamma_{s_n,t_n}} \alpha  \right| \leq C
   \, l(\g_{s_n,t_n})\,||\alpha||_{C^0}\, .
 $$
We have when $t_n-s_n\to +\infty$,
$$
  \frac{1}{t_n-s_n} \int_{c([s_n,t_n])} \alpha  = \frac{1}{t_n-s_n}\int_{c_{s_n,t_n}} \alpha
  + O\left(\frac{l(\g_{s_n,t_n})}{t_n-s_n}\right)=\frac{[c_{s_n,t_n}](\alpha )}{t_n-s_n}
  +o(1) \, .
$$
and the equivalence of (1) and (3) results.

\medskip

The equivalence of (3) and (4) results from the fact that the limit
  $$
  [c](\alpha)=\lim_{n\to \infty} \frac{1}{t_n-s_n}\int_{c([s_n,t_n])}
  \alpha
  $$
does not depend on the representative of the cohomology class
$a=[\alpha]$. If $\beta=\alpha+d\phi$, with $\phi:M\to {\RR}$
smooth, then $[c](\alpha)=[c](\beta)$ since
  $$
  [c](d\phi)=\lim_{n\to \infty} \frac{1}{t_n-s_n} \int_{c([s_n,t_n])} d\phi
  =\lim_{n\to \infty}  \frac{\phi(c(t_n))-\phi(c(s_n))}{t_n-s_n} \to 0,
  $$
since $\phi$ is bounded. Also $[c][\alpha]=[c](\alpha)$.

\medskip

We turn now to (4) implies (5). First note that there is an
identification $H^1(M,{\ZZ}) \cong [M,K({\ZZ},1)]=[M, \TT]$, where any
cohomology class $[\alpha]\in H^1(M, {\ZZ})$ is associated to a
(homotopy class of a) map $f:M\to \TT$ such that
$[\alpha]=f^*[\TT]$, where $[\TT]\in H^1(\TT,\ZZ)$ is the
fundamental class. To prove (5), assume first that $f$ is smooth.
With the identification $\TT={\RR}/{\ZZ}$, the class $f^*(d
x)=df\in \Omega^1(M)$ represents $[\alpha]$. Therefore
 \begin{equation} \label{eqn:masstar}
 \begin{aligned}
 &\frac{\widetilde{f\circ c}(t_n) -
 \widetilde{f\circ c}(s_n)}{t_n-s_n}= \frac{1}{t_n-s_n} \int_{[s_n,t_n]} d(f\circ
 c)=\\
 & = \frac{1}{t_n-s_n} \int_{[s_n,t_n]} (df)(c') =
 \frac{1}{t_n-s_n} \int_{c([s_n,t_n])} df \, ,
 \end{aligned}
  \end{equation}
and from the existence of the limit in (4) we get the limit in (5)
that we identify as
 $$
 \rho (f)= [c][df] \, .
 $$
If $f$ is only continuous, we approximate it by a smooth function,
which does not change the limit in (5).

Conversely, if (5) holds, then any integer cohomology class admits
a representative of the form $\alpha =df$, where $f: M\to \TT$ is
a smooth map. Then using (\ref{eqn:masstar}) we have
 $$
 \frac{1}{t_n-s_n} \int_{c([s_n,t_n])} \alpha \to \rho(f)\, .
 $$
So the limit in (4) exists for $\alpha=df$.
This implies that the limit in (4) exists for any closed $\alpha \in
\Omega^1 (M)$, since $H^1(M,\ZZ )$ spans $H^1(M,\RR )$.

\medskip

We check the equivalence of (5) and (6). First, let us see that
(6) implies (5). As before, it is enough to prove (5) for a smooth
map $f:M\to \TT$. Let $x_0\in \TT$ be a regular value of $f$, so
that $H=f^{-1}(x_0)\subset M$ is a smooth (two-sided)
hypersurface. Then $[H]$ 
represents the
Poincar\'{e} dual of $[df]\in H^1(M,{\ZZ})$. Choose $x_0$ such that it
is also a regular value of $f\circ c$, so all the
intersections of $c(\RR)$ with $H$ are transverse. Now for any $s<t$, 
 $$
 [c_{s,t}]\cdot [H]= \# c([s,t]) \cap H + \# \gamma_{s,t} \cap
 H\, ,
 $$
where $\#$ denotes signed count of intersection points (we may
assume that all intersections of $\gamma_{s,t}$ and $H$ are
transverse, by a small perturbation of $\g_{s,t}$; also we do not
count the extremes of $\gamma_{s,t}$ in $\# \gamma_{s,t} \cap H$
in case that either $c(s)\in H$ or $c(t)\in H$).

Now
 $$
 \# c ([s,t])\cap H =[\widetilde{f\circ c}(t)] + [ -
 \widetilde{f\circ c}(s)]=\widetilde{f\circ c}(t) -
 \widetilde{f\circ c}(s) + O(1),
 $$
where $[\cdot]$ denotes the integer part, and $| \# \gamma_{s,t}
\cap H|$ is bounded by the total variation of $\widetilde{f\circ
\gamma_{s,t}}$, which is bounded by the maximum of $df$ times the
total length of $\gamma_{s,t}$, which is $o (t-s)$ by assumption.
Hence
 $$
 \lim_{n\to +\infty} \frac{\widetilde{f\circ c}(t_n) -
 \widetilde{f\circ c}(s_n)}{t_n-s_n} =
 \lim_{n\to +\infty}  \frac{\# c([s_n,t_n])\cap H}{t_n-s_n}
 $$
exists.

Conversely, if (5) holds, consider a two-sided embedded
topological hypersurface $H\subset M$. Then there is a collar
$[0,1]\times H$ embedded in $M$ such that $H$ is identified with
$\{\frac12 \}\x H$. There exists a continuous map $f:M\to \TT$
such that $H=f^{-1}(x_0)$ for $x_0=\frac12\in \TT$, constructed by
sending $[0,1]\times H \to [0,1]\to \TT$  and collapsing the
complement of $[0,1]\times H$ to $0$.

Now if all intersections of $c(\RR)$ and $H$ are transverse, that
means that for any $t\in\RR$ such that $c(t)\in H$, we have that
$c(t-\epsilon)$ and $c(t+\epsilon)$ are at opposite sides of the
collar, for $\epsilon>0$ small (the sign of the intersection point
is given by the direction of the crossing). So $f(c(s))$ crosses
$x_0$ increasingly or decreasingly (according to the sign of the
intersection). Hence
 $$
 \frac{\# \{u \in [s_n,t_n] \ ; \ c(u)\in H\}}{t_n-s_n} =
 \frac{\widetilde{f\circ c}(t_n) - \widetilde{f\circ
 c}(s_n)}{t_n-s_n}+o(1).
 $$
The required limit exists.
\end{proof}

\medskip

\begin{remark}
Proposition \ref{prop:asymptotic} holds if we only assume the
curve $c$ to be rectifiable.
\end{remark}

\medskip

\begin{corollary}\label{prop:asymptotic2}
Let $c:{\RR}\to M$ be a $C^1$  curve. The following conditions are
equivalent:

\begin{enumerate}
 \item The curve $c$ is a Schwartzman asymptotic cycle.

\medskip

 \item The limit
  $$
  \lim_{t\to +\infty \atop s\to -\infty } \frac{\Phi(c_{|[s,t]})}{t-s}   \in H_1(M,{\RR})
  $$
  exists.

\medskip

 \item For any closed $1$-form $\alpha \in \Omega^1(M)$, the
  limit
  $$
  \lim_{t\to +\infty \atop s\to -\infty } \frac{1}{t-s}\int_{c([s,t])} \alpha
   $$
  exists.

\item For any cohomology class $[\alpha ]\in H^1(M,\RR )$, the limit
  $$
  [c][\alpha ]=\lim_{t\to +\infty \atop s\to -\infty} \frac{1}{t-s}\int_{c([s , t])} \alpha
   $$
  exists, and does not depend on the closed  $1$-form $\alpha \in
  \Omega^1(M)$ representing the cohomology class.

\medskip

 \item For any continuous map $f:M\to \TT$, let $\widetilde{f\circ c}:{\RR}
 \to {\RR}$ be a lift of $f\circ c$, we have that the limit
 $$
 \lim_{t\to +\infty \atop s\to -\infty } \frac{\widetilde{f\circ c}(t) -
 \widetilde{f\circ c}(s)}{t-s}
 $$
 exists.

\medskip

 \item For a (two-sided, embedded, transversally oriented) hypersurface $H\subset M$ such that all
intersections $c({\RR})\cap H$ are transverse, the limit
  $$
  \lim_{t\to +\infty \atop s\to -\infty }  \frac{\# \{u \in [s,t] \, ; \, c(u)\in H\}}{t-s}
  $$
  exists.
\end{enumerate}

When $c$ is a Schwartzman asymptotic cycle, we have
$[c]=[c]_{\Phi}$ for any calibrating function $\Phi$. If $\alpha
\in \Omega^1 (M)$ is a closed form then
 $$
 [c](\alpha)=[c][\alpha]=\la [\alpha],[c]\ra\,.
 $$
If $f:M\to \TT$ and
$a=f^*[dx]\in H^1(M,\ZZ)$, where $[dx]\in H^1(\TT,\ZZ)$ is the
generator, and $H \subset M$ is a hypersurface such that $[H]$ is
the Poincar\'e dual of $a$, then we have
 $$
 \la [c],[\alpha] \ra =\rho (f)=[c]\cdot [H] \, .
 $$
\end{corollary}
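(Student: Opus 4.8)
The plan is to deduce Corollary \ref{prop:asymptotic2} from Proposition \ref{prop:asymptotic} by quantifying over all admissible pairs of sequences $(s_n), (t_n)$ with $s_n \to -\infty$, $t_n \to +\infty$. The key observation is that a net indexed by $(s,t)$ with $t\to+\infty$, $s\to-\infty$ converges if and only if it converges along every such sequence \emph{to the same limit}; and in a metrizable space (here $H_1(M,\RR)$, finite-dimensional, or $\RR$) this is equivalent to the existence of the sequential limit for every choice of sequences. So each of the six ``$(s,t)\to(-\infty,+\infty)$'' limits in the corollary exists if and only if the corresponding sequential limit from Proposition \ref{prop:asymptotic} exists for every admissible pair of sequences.

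First I would state precisely this net/sequence reduction: for a function $F(s,t)$ valued in a metric space, $\lim_{t\to+\infty, s\to-\infty} F(s,t)$ exists iff for all sequences $s_n\to-\infty$, $t_n\to+\infty$ the limit $\lim_n F(s_n,t_n)$ exists, in which case all these sequential limits agree (if two disagreed one could interleave the two sequences to get a non-convergent sequence, contradicting convergence along \emph{that} interleaved sequence). Apply this with $F(s,t) = [c_{s,t}]/(t-s)$ for condition (1) (which by Definition \ref{def:asymptotic-cycle} is exactly the assertion that this net converges), and with the analogous $F$ for (2)--(6): $\Phi(c_{|[s,t]})/(t-s)$, $\frac{1}{t-s}\int_{c([s,t])}\alpha$, the same with the cohomological independence clause, $(\widetilde{f\circ c}(t)-\widetilde{f\circ c}(s))/(t-s)$, and $\#\{u\in[s,t]: c(u)\in H\}/(t-s)$.

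Then the corollary follows termwise: for each fixed admissible pair of sequences, Proposition \ref{prop:asymptotic} asserts the equivalence of conditions (1)--(6) of that proposition, \emph{and} the identification of the limits as $[c]=[c]_\Phi$, $[c](\alpha)=[c][\alpha]=\la[\alpha],[c]\ra$, $\rho(f)=[c]\cdot[H]$ whenever the limits exist. By the reduction of the previous paragraph, the ``for all sequences'' version of each condition is equivalent to the corresponding net-convergence statement in the corollary, and the equivalences of Proposition \ref{prop:asymptotic} pass to these universally quantified statements verbatim. The final identifications in the corollary then hold because the sequential versions hold for every admissible pair and hence pass to the net limit. One small point to address: in conditions (4) and (6) of the corollary there is an extra clause (independence of the representative $1$-form in (4); transversality in (6)); these clauses are statements about \emph{which} forms/hypersurfaces are allowed, not about the sequences, so they are already handled inside Proposition \ref{prop:asymptotic} and carry over unchanged.

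The main obstacle — and it is a mild one — is the bookkeeping to make sure the universal quantifier over sequences interacts correctly with the equivalences: Proposition \ref{prop:asymptotic} is stated ``given two sequences $(s_n),(t_n)$, the following are equivalent,'' so a priori condition (1) holding for \emph{one} pair of sequences only guarantees (3) for \emph{that same} pair. One must note that since the equivalence holds for every fixed pair, ``(1) for all pairs'' $\iff$ ``(3) for all pairs,'' and only then invoke the net criterion to rewrite ``for all pairs'' as net convergence. I would also remark, as the paper does after Proposition \ref{prop:asymptotic}, that everything goes through assuming only that $c$ is rectifiable. The proof is therefore essentially a one-paragraph formal deduction, and I would write it as: ``This is immediate from Proposition \ref{prop:asymptotic} together with the elementary fact that a net $(F(s,t))$ in a metric space converges as $t\to+\infty$, $s\to-\infty$ if and only if it converges along every pair of sequences $s_n\to-\infty$, $t_n\to+\infty$, all limits then being equal. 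Applying this to each of the six expressions, the equivalences and the final identifications follow from the corresponding statements in Proposition \ref{prop:asymptotic}.''
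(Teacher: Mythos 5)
Your proposal is correct, and it is precisely the argument the authors have in mind: the paper gives no proof of this corollary, stating it immediately after Proposition \ref{prop:asymptotic} on the understanding that it follows by exactly the net-versus-sequence reduction you describe. You correctly identify the two points that need care — that a net in a metric space converges iff it converges along every pair of sequences (with the interleaving argument forcing all sequential limits to agree), and that the equivalences of the proposition hold for each fixed pair of sequences so that universal quantification over pairs transfers the equivalence verbatim — and you also correctly observe that the extra quantifiers over $\alpha$, $f$, and $H$ in conditions (3)--(6) simply commute with the quantifier over sequences.
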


\section{Schwartzman $1$-dimensional cycles}\label{sec:Schwartzman-cycles}

We assume that $M$ is a compact 
$C^\infty$ Riemannian manifold, with Riemannian metric $g$.

\begin{definition} \textbf{\em (Schwartzman representation of homology classes)}
\label{def:representation-Schwartzman-1-solenoid} Let $f:S\to M$ be
an immersion in $M$ of an oriented $1$-solenoid $S$. Then $S$ is a
Riemannian solenoid with the pull-back metric $f^* g$.

\begin{enumerate}
\item If $S$ is endowed with a transversal measure
 $\mu=(\mu_T)\in \cM_\cT (S)$, the immersed measured solenoid $f:S_\mu\to M$
 represents a homology class $a\in H_1(M, \RR)$ if for
 $(\mu_T)$-almost all leaves $c:{\RR} \to S$, parametrized
 positively and by arc-length, we have that $f\circ c$ is a
 Schwartzman asymptotic $1$-cycle with $[f\circ c]=a$.

\item The immersed solenoid $f:S\to M$ fully represents a homology
class $a\in
 H_1(M, \RR)$ if for all leaves $c:{\RR} \to S$,
 parametrized positively and by arc-length, we have that $f\circ c$ is a
 Schwartzman asymptotic $1$-cycle with $[f\circ c]=a$.
\end{enumerate}

\end{definition}

Note that if $f:S\to M$ fully represents an homology class $a\in
H_1(M, \RR)$, then for all oriented leaves $c\subset  S$, we have
that $f \circ c$ is a Schwartzman asymptotic cycle and
 $$
 \cC_+(f \circ c)=\cC_-(f \circ c)
 =\cC(f \circ c)=\RR_{\geq 0}\cdot a \subset H_1(M,\RR) \, ,
 $$
by remark \ref{rem:8.16bis}.

Observe that contrary to what happens with Ruelle-Sullivan cycles,
we can have an immersed solenoid fully representing an homology
class without the need of a transversal measure on $S$.

\begin{definition} \textbf{\em (Cluster of an immersed solenoid)}
\label{def:clusters-Schwartzman-1-solenoid}
Let $f:S\to M$ be an immersion in $M$ of an oriented $1$-solenoid
$S$. The homology cluster of $(f,S)$, denoted by $\cC
({{f,S}})\subset H_1(M,\RR)$, is defined as the derived set of 
$([(f\circ c)_{s,t}]/(t-s))_{c,t\to \infty, s\to -\infty}$, taken over all
images of orientation preserving parametrizations $c$ of all leaves of
$S$, and $t\to +\infty$ and $s\to -\infty$. Analogously, we define
the corresponding positive and negative clusters.

The Riemannian cluster of $(f,S)$, denoted by  $\cC^g ({{f,S}})$,
is defined in a similar way, using arc-length orientation
preserving parametrizations. Analogously, we define the positive,
negative and balanced Riemannian clusters.
\end{definition}

\medskip

As in section \ref{sec:1-schwartzman}, we can prove with arguments
analogous to those of propositions \ref{prop:Riemannian-clusters}
and \ref{prop:8.15.si}\ :

\begin{proposition} \label{prop:10.3}
The homology clusters $\cC({{f,S}})$, $\cC_\pm ({{f,S}})$ are
non-empty, closed cones of $H_1(M,\RR)$. If these cones are
non-degenerate, their images in $\PP H_1(M,\RR)$ are non-empty and
compact sets.

The Riemannian homology clusters $\cC^g ({{f,S}})$, $\cC_\pm^g
({{f,S}})$ are non-empty, compact and connected subsets of
$H_1(M,\RR)$.
\end{proposition}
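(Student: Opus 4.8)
The plan is to reproduce, with the obvious modifications, the proofs of Propositions \ref{prop:8.15.si} and \ref{prop:Riemannian-clusters}; the only new feature is that the relevant family of homology classes is now indexed by pairs (leaf, parameter interval) rather than by parameter intervals of a single curve, so that $\cC(f,S)$, $\cC_\pm(f,S)$, $\cC^g(f,S)$ and $\cC_\pm^g(f,S)$ are the derived sets of these larger families. As in Proposition \ref{prop:8.15.si}, choose the closing arcs $\g_{s,t}$ to depend only on the endpoints $f(c(s))$, $f(c(t))$, so that $[(f\circ c)_{s,t}]$ depends only on those two points of $M$; since $M$ is compact we may take $l(\g_{s,t})\le\diam M$ for all of them simultaneously. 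For $\cC(f,S)$ and $\cC_\pm(f,S)$: they are closed, being derived sets. Non-emptiness follows as in Proposition \ref{prop:8.15.si} --- fix one leaf, slow its parametrization enough that $[(f\circ c)_{s,t}]/(t-s)$ stays in a fixed ball of $H_1(M,\RR)$ (legitimate because $[(f\circ c)_{s,t}]$ depends only on its endpoints and grows at most linearly in the arc-length by Theorem \ref{thm:A.4}), and extract a convergent sequence as $t\to+\infty$, $s\to-\infty$; the one-sided versions are identical. The cone property is the reparametrization trick: rescaling by $\l>0$ the velocity of a parametrization producing $a$ produces $\l a$, and likewise for $\cC_\pm$. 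Finally, a closed cone $\cC\ne\{0\}$ meets the unit sphere of $H_1(M,\RR)$ in a non-empty compact set homeomorphic to the image of $\cC$ in $\PP H_1(M,\RR)$, which gives the statement about projective images.

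For the Riemannian clusters, the first step is boundedness of the whole family. With arc-length parametrizations, $l((f\circ c)_{s,t})=l(f\circ c|_{[s,t]})+l(\g_{s,t})=(t-s)+l(\g_{s,t})\le(t-s)+\diam M$, so Theorem \ref{thm:A.4} gives $\|[(f\circ c)_{s,t}]\|\le(t-s)+\diam M$, whence
$$
\left\|\frac{[(f\circ c)_{s,t}]}{t-s}\right\|\le 1+\frac{\diam M}{t-s}\longrightarrow 1
$$
uniformly over all leaves. Thus $B=\{[(f\circ c)_{s,t}]/(t-s)\}$ is bounded, and $\cC^g(f,S)$ (respectively $\cC_\pm^g(f,S)$), being the derived set of $B$ (respectively of its one-sided sub-families), is non-empty and compact, exactly as in Proposition \ref{prop:Riemannian-clusters}.

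The one point that requires real care --- and the step I expect to be the main obstacle --- is connectedness of the Riemannian clusters. The mechanism is that of Proposition \ref{prop:closed-clusters}: replacing $(s,t)$ by $(s',t')$ with $|s-s'|$, $|t-t'|$ bounded changes $[(f\circ c)_{s,t}]$ by a class of norm $O(\diam M+|s-s'|+|t-t'|)$ --- estimate the difference loop by the lengths of the inserted leaf pieces and closing arcs --- so after dividing by $t-s$ the oscillation of $[(f\circ c)_{s,t}]/(t-s)$ tends to $0$, and a bounded family with vanishing oscillations has cluster set that is $\e$-connected for every $\e>0$, hence connected. The ingredient with no analogue in the single-curve case is moving between two \emph{different} leaves inside an $\e$-chain. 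For this one uses that, $M$ being compact, two leaf pieces that are $C^0$-close in $S$ (hence under $f$) within the convexity radius of $M$ have closed-up homology classes differing by a bounded defect --- a thin-quadrilateral homotopy fills the difference loop up to the bounded contribution of the closing arcs --- while producing such $C^0$-close pieces of distinct leaves is where the structure of $S$ (density of leaves, for instance when $S$ is minimal) must be invoked, and this is the delicate point. Feeding these inter-leaf steps into the $\e$-chain shows $\cC^g(f,S)$, and likewise $\cC_\pm^g(f,S)$, is $\e$-connected for all $\e>0$, hence, being compact, connected.
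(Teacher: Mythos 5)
Your treatment of the first paragraph --- non-emptiness, closedness, the cone property, and compactness of the projective images --- and of non-emptiness and compactness of the Riemannian clusters is correct, and it is exactly what the authors intend when they dispose of the proof with the remark that it is ``analogous'' to Propositions~\ref{prop:8.15.si} and~\ref{prop:Riemannian-clusters}. The boundedness via Theorem~\ref{thm:A.4}, the derived-set closedness, and the reparametrization trick for the cone property all transcribe without incident.

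Your instinct that connectedness of $\cC^g(f,S)$ and $\cC_\pm^g(f,S)$ is the delicate point is exactly right, and the gap you flag is genuine and, I believe, not fixable at the stated level of generality. In the single-curve Proposition~\ref{prop:closed-clusters} the $\e$-chain argument works because one moves \emph{continuously} in the index $(s,t)$, and the resulting oscillation of $[c_{s,t}]/(t-s)$ vanishes uniformly; here the index set also ranges over leaves of $S$, whose transversal is in general totally disconnected, so continuous interpolation between leaves is impossible. Your thin-quadrilateral filling controls the defect for two leaf pieces that stay $C^0$-close over the \emph{entire} interval $[s,t]$, but a cluster value is attained only as $s\to-\infty$, $t\to+\infty$, and two distinct leaves (even of a minimal solenoid) eventually separate; thus the inter-leaf jump must occur at a finite stage, where $[(f\circ c)_{s,t}]/(t-s)$ is merely bounded and need not lie near any cluster value, so one cannot feed it into the $\e$-chain. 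In fact the statement is false as written with no hypothesis on $S$: take $S$ to be the disjoint union of two circles of unit length immersed in $M=\TT^2$ along the two standard generators $e_1,e_2$ of $H_1(\TT^2,\ZZ)$. Then
$$
\cC^g(f,S)=\cC_\pm^g(f,S)=\{e_1,e_2\}\,,
$$
a two-point set, hence compact and non-empty but not connected. (The first paragraph of the proposition does hold in this example: $\cC(f,S)=\RR_{\geq 0}e_1\cup\RR_{\geq 0}e_2$ is a closed cone with compact projective image.) So some hypothesis on $S$ --- connectedness at least, or more plausibly minimality, which is the standing assumption in the theorems of the same section that are actually proved --- is missing from the statement; with that caveat, your proposal is an honest and accurate account of what can and cannot be proved along the paper's suggested lines.
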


The following proposition is clear, and gives the relationship with
the clusters of the images by $f$ of the leaves of $S$.

\begin{proposition}
Let $f:S\to M$ be an immersion in $M$ of an oriented $1$-solenoid
$S$. We have
 $$
 \bigcup_{c\subset S} \cC (f\circ c) \subset \cC ({{f,S}}) \, ,
 $$
where the union runs over all parametrizations of leaves of $S$.
We also have
 $$
 \bigcup_{c\subset S} \cC_\pm (f\circ c) \subset \cC_\pm ({{f,S}}) \, ,
 $$
and
 $$
 \bigcup_{c\subset S} \cC_b (f\circ c) \subset \cC_b ({{f,S}}) \, .
 $$
And similarly for all Riemanniann clusters with $\cC_* (f\circ
c)$ denoting the Schwartzman clusters for the arc-length
parametrization.
\end{proposition}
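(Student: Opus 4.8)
The plan is to recognize that every inclusion asserted here is a special case of one elementary fact: if a family of points in a topological space is obtained from another by \emph{restricting} its index set, then the set of accumulation points of the smaller family is contained in the set of accumulation points of the larger one. Indeed, each of $\cC(f,S)$, $\cC_\pm(f,S)$ is by definition the derived set of a family of normalized integer homology classes indexed over \emph{all} orientation preserving parametrizations of \emph{all} leaves of $S$ (together with $t\to+\infty$, $s\to-\infty$), whereas $\cC(f\circ c)$, $\cC_\pm(f\circ c)$ are the derived sets of the corresponding families in which the leaf and its parametrization $c$ are held fixed. So the inclusions will follow immediately once this nesting of index sets is made explicit.

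First I would fix a leaf $l\subset S$ together with an orientation preserving parametrization $c:\RR\to l$, and place side by side the family $\bigl([(f\circ c)_{s,t}]/(t-s)\bigr)_{t\to+\infty,\,s\to-\infty}$ whose derived set is $\cC(f\circ c)$ and the family $\bigl([(f\circ c')_{s,t}]/(t-s)\bigr)$ indexed over all orientation preserving parametrizations $c'$ of all leaves of $S$ and over $t\to+\infty$, $s\to-\infty$, whose derived set is $\cC(f,S)$. The first is the subfamily of the second obtained by setting $c'=c$; hence any $a\in\cC(f\circ c)$, being a limit of $[(f\circ c)_{s_n,t_n}]/(t_n-s_n)$ for suitable $s_n\to-\infty$, $t_n\to+\infty$, is also an accumulation point of the larger family, i.e. $a\in\cC(f,S)$. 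Taking the union over all $c$ gives $\bigcup_{c\subset S}\cC(f\circ c)\subset\cC(f,S)$. The one-sided versions $\cC_\pm$ are handled verbatim, replacing $[(f\circ c)_{s,t}]/(t-s)$ by $[(f\circ c)_{0,t}]/t$, resp. $[(f\circ c)_{s,0}]/(-s)$. For the balanced cluster I would observe that a point of $\cC_b(f\circ c)$ comes equipped with a witness: sequences $s_n\to-\infty$, $t_n\to+\infty$ along which not only $[(f\circ c)_{s_n,t_n}]/(t_n-s_n)$ converges but also both one-sided normalized classes $[(f\circ c)_{0,t_n}]/t_n$ and $[(f\circ c)_{s_n,0}]/(-s_n)$ converge; the very same witness, read inside the family for $(f,S)$ with the leaf $c$ used throughout, exhibits the point in $\cC_b(f,S)$. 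Hence $\cC_b(f\circ c)\subset\cC_b(f,S)$, and the union inclusion follows.

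Finally, for the Riemannian clusters $\cC^g$, $\cC^g_\pm$, $\cC^g_b$ I would rerun the identical argument, the only change being that all parametrizations in sight are now arc-length parametrizations. Since an arc-length parametrization of a leaf compatible with the orientation is in particular an orientation preserving parametrization (with a chosen basepoint $c(0)$), the family defining $\cC^g_*(f\circ c)$ is again a subfamily of the one defining $\cC^g_*(f,S)$, and the inclusions drop out as before. I expect essentially no obstacle: the entire content is the monotonicity of derived sets under enlarging the index set, and the only point that merits a line of care is the balanced case, where one must check that the data certifying ``balanced'' for a single leaf still certifies it when that leaf is viewed as one among all the leaves of $S$ — which it does, since each leaf carries its own basepoint $c(0)$ and the one-sided limits are unaffected by enlarging the index set.
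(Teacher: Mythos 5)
Your proof is correct and matches the paper's treatment: the paper dispenses with this proposition as ``clear'' and gives no argument, and the monotonicity-of-derived-sets observation you make (together with the check that the balanced-cluster witness carries over with the leaf and basepoint held fixed) is exactly what makes it so.
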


We recall that given an immersion $f:S\to M$ of an oriented
$1$-solenoid, $S$ becomes a
Riemannian solenoid and theorem \ref{thm:transverse-riemannian}
gives a one-to-one correspondence
between the space of transversal measures (up to scalar
normalization) 
and the space of daval measures,
 $$
 \overline\cM_\cT (S) \cong \cM_\cL (S) \, .
 $$

Moreover, in the case of $1$-solenoids that we consider here, 
they do satisfy the controlled growth condition of definition
\ref{def:controlled-growth}. Therefore all Schwartzman measures
desintegrate as length on leaves by theorem
\ref{thm:desintegration-Schwartzman-measures}.

Giving any transversal measure $\mu$ we can consider the associated
generalized current $(f,S_\mu)\in \cC_k(M)$.

\begin{definition} We define the Ruelle-Sullivan map
 $$
 \Psi : \cM_\cT (S) \to H_1(M,\RR)
 $$
by
 $$
 \mu \mapsto \Psi (\mu )=[f,S_\mu] \, .
 $$

The Ruelle-Sullivan cluster cone of $(f,S)$ is the image of $\Psi$
 $$
 \cC_{RS} ({{f,S}}) =\Psi (\cM_\cT (S))=\left \{ [f,S_\mu] ; \mu
 \in \cM_\cT (S)\right \} \subset H_1(M,\RR ) \, .
 $$

The Ruelle-Sullivan cluster set is
 $$
 \PP\cC_{RS} ({{f,S}}) \cong \left \{[f,S_\mu] ; \mu \in \cM_\cL (S)\right \} \subset
 H_1(M,\RR ) \, ,
 $$
i.e. using transversal measures which are normalized (using the Riemannian
metric of $M$).
\end{definition}

\medskip

\begin{proposition} \label{prop:11.6}
Let $\cV_\cT(S)$ be the set of all signed measures, with finite
absolute measure and invariant by holonomy, on the solenoid $S$.
The Ruelle-Sullivan map $\Psi$ extended by linearity to
$\cV_\cT(S)$ is a linear continuous operator,
 $$
 \Psi : \cV_\cT (S) \to H_1(M,\RR) \, .
 $$
\end{proposition}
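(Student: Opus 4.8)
The plan is to reduce the statement to the pairing of the closed current $(f,S_\mu)$ with closed $1$-forms, exploiting that the metric on $S$ is the \emph{pull-back} metric $f^*g$, so that $|f^*\alpha|\le\|\alpha\|_{C^0}$ pointwise along the leaves, together with the elementary bound $\bigl|\int_L f^*\alpha\bigr|\le\|\alpha\|_{C^0}\,\ell(L)$. First I would settle well-definedness and linearity. For $\mu\in\cV_\cT(S)$, take the Hahn--Jordan decomposition $\mu=\mu^+-\mu^-$; since every holonomy map is a local homeomorphism preserving $\mu$ and pushforward by an injective Borel map preserves mutual singularity, uniqueness of the decomposition forces $h_*\mu^\pm=\mu^\pm$, so $\mu^+,\mu^-$ are finite positive transversal measures. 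Then $(f,S_\mu):=(f,S_{\mu^+})-(f,S_{\mu^-})$ is a closed current (both summands are closed by \cite{MPM1}), so $[f,S_\mu]\in H_1(M,\RR)$ is well defined; independence of the chosen decomposition and linearity of $\mu\mapsto[f,S_\mu]$ are immediate because the defining formula for $\langle(f,S_\mu),\omega\rangle$ in Definition \ref{def:Ruelle-Sullivan} is an integral against $\mu$, hence additive and homogeneous in $\mu$.

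For continuity, fix a finite atlas of flow-boxes $\{(U_i,\varphi_i)\}_{1\le i\le N}$, $U_i\cong D^1\times K(U_i)$, covering $S$, and equip $\cV_\cT(S)$ with the total-variation norm $\|\mu\|=|\mu|(S)$. Because $S$ is compact and $f$ is a smooth immersion, $f^*g$ restricts to a genuine Riemannian metric on leaves, and there are constants $0<\ell_0\le L_0<\infty$ with $\ell_0\le\ell_{f^*g}(L_y)\le L_0$ for every local leaf $L_y=\varphi_i^{-1}(D^1\times\{y\})$. Given a closed $1$-form $\alpha$ and a measurable partition $S=\bigcup_i S_i$ with $S_i\subset U_i$,
\begin{align*}
 \bigl|\langle(f,S_\mu),\alpha\rangle\bigr|
 &\le \sum_i\int_{K(U_i)}\ell_{f^*g}(L_y\cap S_i)\,\|\alpha\|_{C^0}\,d|\mu|_{K(U_i)}(y)\\
 &\le L_0\,\|\alpha\|_{C^0}\sum_i|\mu|_{K(U_i)}(K(U_i)).
\end{align*}
Applying the disintegration (\ref{eqn:1111}) to $|\mu|$ on $U_i$ gives $|\mu|(U_i)=\int_{K(U_i)}\ell_{f^*g}(L_y)\,d|\mu|_{K(U_i)}(y)\ge\ell_0\,|\mu|_{K(U_i)}(K(U_i))$, whence $\sum_i|\mu|_{K(U_i)}(K(U_i))\le\ell_0^{-1}\sum_i|\mu|(U_i)\le N\ell_0^{-1}\|\mu\|$, and therefore $\bigl|\langle(f,S_\mu),\alpha\rangle\bigr|\le C\,\|\alpha\|_{C^0}\,\|\mu\|$ with $C=NL_0\ell_0^{-1}$ depending only on $f$, $g$ and the atlas.

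To conclude, since $(f,S_\mu)$ is closed, $\langle(f,S_\mu),\alpha\rangle$ depends only on $[\alpha]\in H^1(M,\RR)$ and equals $\langle[f,S_\mu],[\alpha]\rangle$; taking the supremum over all $[\alpha]$ with $\|[\alpha]\|_{C^0}\le1$ and using the associated operator norm on $H_1(M,\RR)\cong(H^1(M,\RR))^*$ yields $\|\Psi(\mu)\|\le C\|\mu\|$, and since $H_1(M,\RR)$ is finite dimensional this is a genuine continuity estimate for any choice of norm. The only step that is not purely formal is the uniform two-sided length bound $\ell_0\le\ell_{f^*g}(L_y)\le L_0$ on local leaves and the ensuing comparison of the transversal masses $|\mu|_{K(U_i)}(K(U_i))$ with $\|\mu\|$; both follow from compactness of $S$ together with $f$ being an immersion, and this is where the (modest) real work lies.
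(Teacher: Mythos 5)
Your proof is correct as written, but it answers a slightly different question than the paper (implicitly) needs, and that difference is worth flagging.

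The paper's own proof is two sentences: linearity is "clear" from the flow-box formula, and continuity is asserted because ``if $\mu_n\to\mu$, then $[f,S_{\mu_n}]\to[f,S_\mu]$ as can be seen in a fixed flow-box covering.'' What the paper means by $\mu_n\to\mu$ is almost certainly weak$^*$ convergence of transversal (equivalently, daval) measures: the very next corollary invokes compactness of $\cM_\cL(S)$ and continuity of $\Psi$ to conclude that $\cC_{RS}(f,S)$ is compact, and $\cM_\cL(S)$ is compact in the weak$^*$ topology, not the total-variation topology. You instead equip $\cV_\cT(S)$ with the total-variation norm and prove an explicit operator bound $\|\Psi(\mu)\|\le C\|\mu\|$. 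That bound is correct (the two-sided leaf-length estimate, the disintegration (\ref{eqn:1111}) applied to $|\mu|$, and the Jordan-decomposition step via holonomy invariance all check out), and it is a cleaner quantitative statement than what the paper gives. But since the TV topology is strictly finer than the weak$^*$ one, boundedness in TV norm is a \emph{weaker} continuity statement: it does not by itself imply the weak$^*$ continuity that the corollary needs. To get weak$^*$ continuity one should replace the measurable partition $S=\bigcup_i S_i$ by a continuous partition of unity $\{\rho_i\}$ subordinate to the flow-box cover, so that the fiber integrals $y\mapsto\int_{L_y}\rho_i\,f^*\omega$ are continuous on $K(U_i)$; then $\mu_n\to\mu$ weakly on transversals immediately gives $\langle(f,S_{\mu_n}),\omega\rangle\to\langle(f,S_\mu),\omega\rangle$. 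That is the (unstated) content of the paper's phrase. Your Jordan-decomposition observation is a genuine addition --- the paper never says why $\Psi$ is even well-defined on signed measures --- and your quantitative bound is useful, but you should be aware that it sits alongside, rather than replaces, the weak$^*$ continuity the paper is actually relying on downstream.
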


\begin{proof}
Coming back to the definition of generalized current, it is
clear that $\mu\mapsto [f,S_\mu]$ is linear in flow-boxes,
therefore globally. It is also continuous because if $\mu_n\to
\mu$, then $[f,S_{\mu_n}]\to [f,S_\mu ]$ as can be seen in a fixed
flow-box covering of $S$.
\end{proof}

\begin{corollary}
The Ruelle-Sullivan cluster $\cC_{RS} ({{f,S}})$ is a non-empty,
convex, compact cone of $H_1(M,\RR)$. Extremal points of the
convex set $\cC_{RS} ({{f,S}})$ come from the generalized currents
of ergodic measures in $\cM_\cL (S)$.
\end{corollary}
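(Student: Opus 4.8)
The plan is to reduce everything to the compact convex set $\cM_\cL(S)$ of probability daval measures, using the identification $\overline{\cM}_\cT(S)\cong\cM_\cL(S)$ of Theorem \ref{thm:transverse-riemannian} and the fact, from Proposition \ref{prop:11.6}, that the Ruelle--Sullivan map $\Psi$ is linear and continuous. First I would settle non-emptiness: the $1$-solenoids considered satisfy the controlled growth condition, so the existence of Schwartzman measures together with Theorem \ref{thm:desintegration-Schwartzman-measures} gives $\cM_\cL(S)\neq\emptyset$, whence $\cM_\cT(S)\neq\emptyset$ and $\cC_{RS}(f,S)=\Psi(\cM_\cT(S))\neq\emptyset$. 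Next, $\cM_\cT(S)$ is a convex cone inside $\cV_\cT(S)$ --- sums and positive scalar multiples of holonomy-invariant transversal measures are again such --- so, $\Psi$ being linear, its image $\cC_{RS}(f,S)$ is a convex cone in $H_1(M,\RR)$.

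For the compactness statement I would pass to the normalized picture. By Theorem \ref{thm:transverse-riemannian} every transversal measure is a unique positive multiple of one whose associated daval measure is a probability measure, so $\PP\cC_{RS}(f,S)=\Psi(\cM_\cL(S))$ and $\cC_{RS}(f,S)$ is the cone it generates. The set $\cM_\cL(S)$ is convex and is a weak-$*$ closed subset of the weak-$*$ compact space of Borel probability measures on the compact space $S$ --- closedness of the daval condition is exactly the fact used in the proof of Theorem \ref{thm:desintegration-Schwartzman-measures} --- hence $\cM_\cL(S)$ is compact. Continuity of $\Psi$ then gives that $\PP\cC_{RS}(f,S)=\Psi(\cM_\cL(S))$ is a compact convex subset of $H_1(M,\RR)$; this is the sense in which $\cC_{RS}(f,S)$ is a compact convex cone, namely that its base $\PP\cC_{RS}(f,S)$ is compact and convex.

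The substantive point is the description of the extreme points. Here I would first invoke the classical fact --- for the holonomy pseudogroup in place of a single transformation --- that a holonomy-invariant probability measure on $S$ is ergodic in the sense of Definition \ref{def:transversal-ergodicity} if and only if it is an extreme point of $\cM_\cL(S)$, and observe that under $\overline{\cM}_\cT(S)\cong\cM_\cL(S)$ these correspond precisely to the ergodic transversal measures. The map $\Psi\colon\cM_\cL(S)\to\PP\cC_{RS}(f,S)$ is then a continuous affine surjection of compact convex sets, so for an extreme point $x$ of $\PP\cC_{RS}(f,S)$ the fibre $\Psi^{-1}(x)$ is a non-empty compact convex face of $\cM_\cL(S)$; by Krein--Milman it has an extreme point $\mu$, and an extreme point of a face of $\cM_\cL(S)$ is an extreme point of $\cM_\cL(S)$, so $\mu$ is ergodic and $x=\Psi(\mu)=[f,S_\mu]$. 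Hence every extreme point of the base $\PP\cC_{RS}(f,S)$, equivalently every extreme ray of $\cC_{RS}(f,S)$, is realized by the generalized current of an ergodic measure in $\cM_\cL(S)$. The step I expect to be the main obstacle is exactly this last one: making the identification ``ergodic $=$ extreme point of $\cM_\cL(S)$'' rigorous for the holonomy pseudogroup, and transferring extremality through $\Psi$ cleanly via the face and Krein--Milman argument.
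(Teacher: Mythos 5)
Your argument is correct and follows essentially the same route as the paper: both reduce everything to the non-empty, convex, compact set $\cM_\cL(S)$, push it through the continuous linear map $\Psi$, and then identify extreme points of the image with ergodic measures via the identification $\overline{\cM}_\cT(S)\cong\cM_\cL(S)$. You simply spell out the face-plus-Krein--Milman detail that the paper compresses into the one-line assertion that an extreme point of $\cC_{RS}(f,S)$ must have an extreme point of $\cM_\cL(S)$ in its preimage, and you correctly read ``compact cone'' as compactness of the base $\PP\cC_{RS}(f,S)=\Psi(\cM_\cL(S))$.
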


\begin{proof}
Since $\cM_\cL (S)$ is non-empty, convex and compact set, its
image by the continuous linear map $\Psi$ is also a non-empty,
convex and compact set. Any extremal point of $\cC_{RS} ({{f,S}})$
must have an extremal point of $\cM_\cL (S)$ in its pre-image, and
these are the ergodic measures in $\cM_\cL (S)$ (according to the
identification of $\cM_\cL (S)$ to $\overline\cM_\cT (S)$ and by
proposition 5.11 in \cite{MPM1}).
\end{proof}

It is natural to investigate the relation between the Schwartzman
cluster and the Ruelle-Sullivan cluster.

\begin{theorem} \label{thm:11.8}
Let $S$ be a $1$-solenoid. For any immersion $f:S\to M$ we have
 $$
 \bigcup_{c\subset S} \cC (f\circ c) \subset \cC_{RS} ({{f,S}}) \, .
 $$
\end{theorem}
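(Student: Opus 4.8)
The plan is to take a leaf $c\subset S$ and an element $a\in\cC(f\circ c)$, realized as a limit
$a=\lim_n [(f\circ c)_{s_n,t_n}]/(t_n-s_n)$ along sequences $s_n\to-\infty$, $t_n\to+\infty$, and to produce a sequence of holonomy-invariant probability measures $\mu_n$ on $S$ whose Ruelle-Sullivan images $[f,S_{\mu_n}]$ converge to $a$. The natural candidate for $\mu_n$ is the normalized length measure carried by the arc $c([s_n,t_n])\subset S$ --- that is, the pushforward to $S$ of $\frac{1}{t_n-s_n}\,dt$ under the arc-length parametrization restricted to $[s_n,t_n]$. These $\mu_n$ are probability measures on $S$, and after passing to a subsequence we may assume $\mu_n\to\mu$ weakly in the space of probability measures on $S$. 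The first key point is that $\mu$ is a daval measure: this is exactly the content of Theorem \ref{thm:desintegration-Schwartzman-measures} together with the fact (noted just before Proposition \ref{prop:11.6}) that $1$-solenoids always satisfy the controlled growth condition, so $\mu\in\cM_\cL(S)$ and hence corresponds to a genuine transversal measure, giving a well-defined class $[f,S_\mu]=\Psi(\mu)\in\cC_{RS}(f,S)$.

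The second key point is to identify $\lim_n\langle(f,S_{\mu_n}),\omega\rangle$ with $a(\omega):=\langle a,[\omega]\rangle$ for every closed $\omega\in\Omega^1(M)$. Unwinding Definition \ref{def:Ruelle-Sullivan} for the measure $\mu_n$ concentrated on the single arc $c([s_n,t_n])$, one sees directly that
$$
 \langle (f,S_{\mu_n}),\omega\rangle \;=\; \frac{1}{t_n-s_n}\int_{s_n}^{t_n} (f\circ c)^*\omega \;=\; \frac{1}{t_n-s_n}\int_{(f\circ c)([s_n,t_n])}\omega\,,
$$
because integrating a $1$-form against the length-normalized occupation measure of a curve is the same as integrating the pulled-back form along the curve and normalizing. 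By Proposition \ref{prop:asymptotic} (equivalence of (1), (3), (4)), the existence of the homology limit defining $a$ forces this integral to converge to $\langle a,[\omega]\rangle$ for every closed $\omega$, and the value depends only on $[\omega]\in H^1(M,\RR)$. Since $(f,S_{\mu_n})$ is a closed current, $\langle(f,S_{\mu_n}),\omega\rangle$ also depends only on $[\omega]$, and by continuity of $\Psi$ (Proposition \ref{prop:11.6}), $\langle(f,S_\mu),\omega\rangle=\lim_n\langle(f,S_{\mu_n}),\omega\rangle=\langle a,[\omega]\rangle$. As $H^1(M,\RR)$ separates points of $H_1(M,\RR)$, this gives $[f,S_\mu]=a$, hence $a\in\cC_{RS}(f,S)$.

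The main obstacle I expect is the bookkeeping in the second step: one must be careful that the normalizing factor $t_n-s_n$ is precisely the length of the arc $c|_{[s_n,t_n]}$ (true here because $c$ is parametrized by arc-length, by the convention in Definition \ref{def:representation-Schwartzman-1-solenoid}), so that the occupation measure $\mu_n$ is genuinely a \emph{probability} measure and the two normalizations --- "normalize by length" in the Schwartzman construction and "normalize to mass one" in the daval/transversal correspondence --- match up. A secondary technical point is verifying weak-$*$ compactness of the $\mu_n$ on the compact space $S$ (immediate) and that the weak limit of occupation measures of longer and longer leaf-arcs lands in $\cM_\cL(S)$ rather than merely in the space of all probability measures; this is where the controlled growth hypothesis for $1$-solenoids, invoked via Theorem \ref{thm:desintegration-Schwartzman-measures}, does the real work, since a priori the limiting measure could fail to disintegrate as length along leaves. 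Once these two points are secured, the inclusion $\bigcup_{c\subset S}\cC(f\circ c)\subset\cC_{RS}(f,S)$ follows by taking the union over all leaves $c$.
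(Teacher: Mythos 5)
Your proof is essentially correct and follows the same core strategy as the paper's argument: build occupation measures $\mu_n$ from the arcs, pass to a weak-$*$ subsequential limit $\mu$, invoke controlled growth plus Theorem \ref{thm:desintegration-Schwartzman-measures} to recognize $\mu$ as a daval measure, and identify $[f,S_\mu]$ with the Schwartzman limit by integrating closed $1$-forms. Where you deviate is instructive: the paper first reduces Theorem \ref{thm:11.8} to minimal sub-solenoids so that it can run the argument of Theorem \ref{thm:11.9}, which picks a local transversal $T$ with $f(T)$ inside a contractible ball $B$, re-times $s_n,t_n$ so the endpoints lie on $T$, and then chooses the representative $\omega$ to vanish on $B$ so that the closing curve $\gamma_{s_n,t_n}\subset B$ contributes exactly zero. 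You bypass all of this by outsourcing the closing-error estimate to Proposition \ref{prop:asymptotic} (equivalence of (1) and (3)), which already contains the bound $|\int_{\gamma_{s_n,t_n}}\omega|=O(l(\gamma_{s_n,t_n}))$; this lets you avoid the minimality reduction, the Poincar\'e return map, and the transversal bookkeeping entirely, which is a genuine simplification for the weaker statement of Theorem \ref{thm:11.8} (the paper's more elaborate setup buys the stronger Theorem \ref{thm:11.9}, where $a$ can be a limit along a sequence of \emph{different} leaves $c_n$).

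Two small slips worth fixing. First, $(f,S_{\mu_n})$ is \emph{not} a closed current: the occupation measure $\mu_n$ is not holonomy-invariant, so Definition \ref{def:Ruelle-Sullivan} does not literally apply to it and the pairing $\int_S f^*\omega\, d\mu_n$ genuinely depends on the representative $\omega$, not just on $[\omega]$ (indeed $\int d\phi\, d\mu_n = (\phi(c(t_n))-\phi(c(s_n)))/(t_n-s_n)\neq 0$). Only in the limit does the dependence on the representative disappear, which is exactly what Proposition \ref{prop:asymptotic}(4) provides. Second, and relatedly, invoking ``continuity of $\Psi$'' (Proposition \ref{prop:11.6}) is a misattribution, since $\Psi$ is defined only on $\cV_\cT(S)$ and the $\mu_n$ do not lie there; the correct justification for $\int_S f^*\omega\, d\mu_n \to \int_S f^*\omega\, d\mu$ is simply weak-$*$ convergence of probability measures on the compact space $S$ tested against the continuous function $x\mapsto f^*\omega(e(x))$, where $e$ is the positively-oriented unit tangent vector along leaves, followed by the observation that for the daval limit $\mu$ this integral equals $\langle (f,S_\mu),\omega\rangle$ by Fubini. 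Neither slip affects the conclusion; the argument goes through once these two points are phrased correctly.
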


\begin{proof}
It is enough to prove the theorem for minimal solenoids, since
each leaf $c\subset S$ is contained in a minimal solenoid
$S_0\subset S$, and
 $$
 \cC (f\circ c) \subset \cC_{RS} (f,S_0)\subset
 \cC_{RS} ({{f,S}}) \, .
 $$
The last inclusion holds because if $\mu$ is a transversal measure
for $S_0$, then it defines a transversal measure $\mu'$ for $S$,
which is clearly invariant by holonomy. Now the generalized
currents coincide, $(f,S_{\mu'})=(f,S_{0,\mu})$, as can be seen by
in a fixed flow-box covering of $S$. Therefore, the Ruelle-Sullivan
homology classes are the same, $[f,S_{\mu'}]=[f,S_{0,\mu}]$.

The statement for minimal solenoids follows from theorem
\ref{thm:11.9} below.
\end{proof}

\begin{theorem} \label{thm:11.9}
Let $S$ be a minimal $1$-solenoid. For any immersion $f:S\to M$ we have
 $$
 \cC({{f,S}})\subset \cC_{RS} ({{f,S}}) \, .
 $$
\end{theorem}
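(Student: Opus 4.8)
The plan is to take an arbitrary element $a \in \cC(f,S)$ of the homology cluster and realize it as $[f, S_\mu]$ for a suitable transversal measure $\mu$ constructed as a weak limit of normalized arc-length measures along longer and longer pieces of a leaf. Concretely, by definition of $\cC(f,S)$ there is a leaf of $S$, a parametrization $c : \RR \to S$, and sequences $s_n \to -\infty$, $t_n \to +\infty$ such that $[(f \circ c)_{s_n, t_n}]/(t_n - s_n) \to a$ in $H_1(M,\RR)$. I would let $\nu_n$ be the arc-length measure on $c([s_n, t_n]) \subset S$, normalized to total mass $1$, viewed as a probability measure on the compact space $S$. By compactness of probability measures on $S$, after passing to a subsequence, $\nu_n \to \mu$ weakly for some probability measure $\mu$ on $S$.

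The first key step is to identify $\mu$ as a daval measure, hence (by Theorem \ref{thm:transverse-riemannian}, applied as in the discussion preceding this theorem, since $1$-solenoids satisfy the controlled growth condition) as arising from a transversal measure $(\mu_T) \in \cM_\cT(S)$. This is exactly the content of Theorem \ref{thm:desintegration-Schwartzman-measures}: $\mu$ is a Schwartzman measure by construction, so it desintegrates as length along leaves. Here minimality of $S$ is what guarantees the leaf $c$ is dense and the controlled growth machinery applies globally. The second key step is to compute $[f, S_\mu]$ and show it equals $a$. For this I would test against an arbitrary closed $1$-form $\omega \in \Omega^1(M)$: by Definition \ref{def:Ruelle-Sullivan}, $\langle (f, S_\mu), \omega \rangle$ is expressed via the desintegration of $\mu$ over flow-boxes, and since $\mu = \lim \nu_n$ while $\langle \nu_n \text{-integral of } f^*\omega \rangle$ is, up to a boundary term of size $O(l(\gamma_{s_n,t_n})/(t_n-s_n)) = o(1)$, equal to $\frac{1}{t_n - s_n}\int_{(f\circ c)_{s_n,t_n}} \omega = \langle [(f\circ c)_{s_n,t_n}]/(t_n-s_n), [\omega]\rangle \to \langle a, [\omega]\rangle$. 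Matching these for all closed $\omega$ gives $[f, S_\mu] = a$, so $a \in \cC_{RS}(f,S)$.

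The main obstacle I anticipate is making the passage from $\nu_n \to \mu$ to convergence of the pairings with $f^*\omega$ fully rigorous: weak convergence of measures gives convergence of integrals of \emph{continuous} functions on $S$, but $\int_{L_y \cap S_i} f^*\omega$ is the integral of the $1$-form $f^*\omega$ along leaf-segments, which must be re-expressed as a genuine continuous integrand against $\mu$. The clean way around this is to use that $f^*\omega$ restricted to the (oriented, arc-length parametrized) leaves can be written as $\phi \cdot (\text{arc-length})$ where $\phi$ is a bounded continuous function on $S$ (the pairing of $f^*\omega$ with the unit leafwise tangent vector); then $\langle(f,S_\mu),\omega\rangle = \int_S \phi \, d\mu = \lim_n \int_S \phi\, d\nu_n$, and the right-hand side is precisely $\lim_n \frac{1}{t_n-s_n}\int_{s_n}^{t_n} \phi(c(u))\, du = \lim_n \frac{1}{t_n-s_n}\int_{c([s_n,t_n])} f^*\omega$, which is the Schwartzman pairing computed above. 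One should also note the argument only needs closed $\omega$ since $[f,S_\mu]$ and $a$ are both determined by their pairings with $H^1(M,\RR)$; this sidesteps any worry about the boundary terms for non-closed forms.
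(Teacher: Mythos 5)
Your proposal follows essentially the same route as the paper: pass to a weak limit of normalized arc-length measures on growing leaf segments, invoke Theorem \ref{thm:desintegration-Schwartzman-measures} to get a daval (hence transversal) measure, and identify the resulting Ruelle--Sullivan class with the Schwartzman limit by testing against closed $1$-forms. The computation $\la (f,S_\mu),\omega\ra = \int_S \phi\,d\mu = \lim_n \int_S\phi\,d\nu_n$, where $\phi$ is the leafwise density of $f^*\omega$ against arc-length, is the right way to bridge weak convergence with the pairing, and your observation that only closed forms matter is also what makes the argument close.

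Two differences from the paper's proof, both minor. First, the paper lets the element $a\in\cC(f,S)$ arise from a \emph{sequence} of leaves $c_n$, since the cluster is the derived set taken over all parametrizations of all leaves; you fix a single leaf $c$. The argument goes through verbatim with $c_n$ in place of $c$, but as written your setup is slightly narrower than the definition. Second, to control the closing curves, the paper picks a small local transversal $T$ near an accumulation point (using minimality for the return map), shifts $s_n,t_n$ by $O(1)$ so the endpoints lie on $T$, and then represents each cohomology class by a form $\omega$ vanishing on a contractible ball containing $f(T)$, so the closing contribution is exactly zero. You instead bound the closing term by $O(l(\gamma_{s_n,t_n})/(t_n-s_n)) = o(1)$, which is a cleaner and more direct estimate; you don't actually need the transversal device for this step.

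One point where both your write-up and the paper are a bit loose, and which is worth flagging: your identity $\int_S\phi\,d\nu_n = \frac{1}{t_n-s_n}\int_{s_n}^{t_n}\phi(c(u))\,du$ (and hence the matching of the $\nu_n$-normalization with the $1/(t_n-s_n)$ normalization in $\cC(f,S)$) tacitly assumes $c$ is arc-length parametrized, whereas $\cC(f,S)$ is defined over \emph{all} orientation-preserving parametrizations. Since both $\cC(f,S)$ and $\cC_{RS}(f,S)$ are cones (Propositions \ref{prop:10.3} and \ref{prop:11.6}), one can reduce to the arc-length case up to a positive scalar, but this reduction should be said. Apart from this normalization remark and the single-leaf restriction, your proof is correct and takes essentially the paper's approach, with a cleaner handling of the closing curves.
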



\begin{proof}
Consider an element $a\in \cC({{f,S}})$ obtained as limit of a
sequence $([(f\circ c_{n})_{s_n,t_n}])$, where $c_n$ is an positively
oriented parametrized leaf of $S$ and $s_n<t_n$, $s_n\to -\infty$,
$t_n\to \infty$. The points $(c_n(t_n))$ must accumulate a
point $x\in S$, and taking a subsequence, we can
assume they converge to it. Choose a
small local transversal $T$ of $S$ at this point, such that
$f(T)\subset B$ where $B\subset M$ is a contractible ball in $M$.
By minimality,
the return map $R_T: T\to T$ is well defined.

Note that we may assume that $\bar T\subset T'$, where $T'$ is also a local
transversal. By compactness of $\bar T$, the return time for $R_{T'}:T'\to T'$
of any leaf, measured with the arc-length parametrization, for any
$x\in \bar T$, is universally bounded. Therefore we can adjust
the sequences $(s_n)$ and $(t_n)$ such that $c_n(s_n)\in \bar T$ and
$c_n(t_n) \in \bar T$, by changing each term by an amount $O(1)$.
Now, after further taking a subsequence, we can arrange that
$c_n(s_n),c_n(t_n) \in T$.

Taking again a subsequence if necessary we can assume that we have a
Schwartzman limit of the measures $\mu_n$ which correspond to the
arc-length on $c_n([s_n,t_n])$ normalized with total mass $1$. The
limit measure $\mu$ desintegrates on leaves because of theorem
\ref{thm:desintegration-Schwartzman-measures}, so it defines a
trasnversal measure $\mu$.

The transversal measures corresponding to $\mu_n$ are atomic,
supported on $T\cap c_n([s_n, t_n))$, assigning the weight
$l([x,R_T(x)])$ to each point in $T\cap c_n([s_n, t_n))$. The
transversal measure corresponding to $\mu$ is its normalized
limit. For each $1$-cohomology class, we may choose a closed
$1$-form $\omega$ representing it and vanishing on $B$ (this is
so because $H^1(M,B)=H^1(M)$, since $B$ is contractible). Assume
that we have constructed $[(f\circ c_{n})_{s_n,t_n}]$ by using
$\gamma_{n,s_n,t_n}$ inside $B$. So
 $$
 \la [f,S_{\mu_n}],\omega \ra = \int_S f^*\omega \, d\mu_n
  =\int_{f\circ c_n([s_n,t_n])} \omega = \la [(f\circ c_{n})_{s_n,t_n}], [\omega]\ra
 \, ,
 $$
thus
 \begin{equation*}
 \la [f,S_\mu],[\omega] \ra = \lim_{n\to \infty}
 \frac{1}{t_n-s_n} \la [f,S_{\mu_n}],\omega \ra =
 \lim_{n\to \infty} \la \frac{[(f\circ c_{n})_{s_n,t_n}]}{t_n-s_n}, [\omega]\ra =\la
 a,[\omega]\ra\, .
 \end{equation*}
Thus the generalized current of the limit measure coincides with
the Schwartzman limit.
\end{proof}

We use the notation $\partial^* C$ for the extremal points of a
compact convex set $C$. For the converse result, we have:

\begin{theorem} Let $S$ be a minimal solenoid and an immersion $f:S\to M$. We have
 $$
 \partial^* \cC_{RS} ({{f,S}}) \subset \bigcup_{c\subset S} \cC (f\circ
 c)\subset \cC({{f,S}}) \, .
 $$
\end{theorem}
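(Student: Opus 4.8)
The second inclusion $\bigcup_{c\subset S}\cC(f\circ c)\subset \cC({{f,S}})$ is exactly the proposition relating the clusters $\cC(f\circ c)$ of leaves to $\cC({{f,S}})$ proved above, so only the first inclusion needs argument. We may assume $S$ is not a single closed leaf, the remaining case being immediate from Proposition \ref{prop:9.3}. Let $a\in\partial^*\cC_{RS}({{f,S}})$. By the corollary describing the extremal points of $\cC_{RS}({{f,S}})$, we have $a=[f,S_\mu]$ for some ergodic probability daval measure $\mu\in\cM_\cL(S)$. Since $S$ is minimal, pick a small local transversal $T$ so that $f(\bar T)$ lies in a contractible ball $B\subset M$, and always choose the closing curves $\gamma_{s,t}$ inside $B$; then every class in $H^1(M,\RR)$ is represented by a closed form $\omega\in\Omega^1(M)$ vanishing on $B$, because $H^1(M,B)\cong H^1(M,\RR)$. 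Let $R_T:T\to T$ be the Poincar\'e return map (which is $\mu$-a.e. invertible by \cite{MPM1}) and $\mu_T$ the associated transversal measure, normalized to be an $R_T$-invariant (hence also $R_T^{-1}$-invariant) ergodic probability measure on $T$.

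The plan is to realize $a$ along a single generic leaf via Birkhoff's ergodic theorem. Write $\tau:T\to\RR_{>0}$ for the arc-length first-return time; since $\mu$ is a finite daval measure, $\tau\in L^1(\mu_T)$ and $\bar\tau:=\int_T\tau\,d\mu_T<\infty$. For $x\in T$ let $c_x:\RR\to S$ be the leaf through $x$ with $c_x(0)=x$, parametrized by arc length, and put $\phi_\omega(x)=\int_{c_x([0,\tau(x)])}f^*\omega$. Desintegration of $\mu$ as length along leaves (Theorem \ref{thm:transverse-riemannian}) gives, for closed $\omega$,
 $$
 \la [f,S_\mu],[\omega]\ra=\frac{1}{\bar\tau}\int_T\phi_\omega\,d\mu_T \, .
 $$
Choose $x_0\in T$ generic for $\mu_T$, so that both $(R_T^jx_0)_{j\geq 0}$ and $(R_T^{-j}x_0)_{j\geq 0}$ equidistribute toward $\mu_T$; set $c=c_{x_0}$, and let $0<t_1<t_2<\cdots\to+\infty$ and $0>s_1>s_2>\cdots\to-\infty$ be the successive forward and backward return times of $c$ to $T$, so $c(t_n)=R_T^{n}x_0\in T$ and $c(s_n)=R_T^{-n}x_0\in T$.

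Since $c([0,t_n])$ concatenates the leaf segments from $R_T^jx_0$ to $R_T^{j+1}x_0$, Birkhoff's theorem applied to $\phi_\omega$ and to $\tau$ yields
 $$
 \frac{1}{t_n}\int_{c([0,t_n])}f^*\omega=\frac{\frac1n\sum_{j=0}^{n-1}\phi_\omega(R_T^jx_0)}{\frac1n\sum_{j=0}^{n-1}\tau(R_T^jx_0)}\too \frac{1}{\bar\tau}\int_T\phi_\omega\,d\mu_T=\la a,[\omega]\ra \, ,
 $$
and likewise $\frac{1}{-s_n}\int_{c([s_n,0])}f^*\omega\too\la a,[\omega]\ra$, using that the backward return time averages to the same $\bar\tau$ (by $R_T^{-1}$-invariance). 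In particular $t_n/n\to\bar\tau$ and $-s_n/n\to\bar\tau$, so $\tfrac{t_n}{t_n-s_n}\to\tfrac12$ and $\tfrac{-s_n}{t_n-s_n}\to\tfrac12$. As $c(t_n),c(s_n)\in f^{-1}(B)$ and $\omega$ vanishes on $B$, the closing curve contributes nothing, whence $\la [(f\circ c)_{s_n,t_n}],[\omega]\ra=\int_{c([s_n,t_n])}f^*\omega$ and
 $$
 \Big\la \frac{[(f\circ c)_{s_n,t_n}]}{t_n-s_n},[\omega]\Big\ra=\frac{t_n}{t_n-s_n}\cdot\frac{\int_{c([0,t_n])}f^*\omega}{t_n}+\frac{-s_n}{t_n-s_n}\cdot\frac{\int_{c([s_n,0])}f^*\omega}{-s_n}\too\la a,[\omega]\ra \, .
 $$
Since the classes $[\omega]$ of closed forms vanishing on $B$ exhaust $H^1(M,\RR)$, this shows $[(f\circ c)_{s_n,t_n}]/(t_n-s_n)\to a$ in $H_1(M,\RR)$, so $a\in\cC(f\circ c)\subset\bigcup_{c\subset S}\cC(f\circ c)$.

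The step requiring most care is the identity $\la [f,S_\mu],[\omega]\ra=\bar\tau^{-1}\int_T\phi_\omega\,d\mu_T$ together with $\tau\in L^1(\mu_T)$: this is where one must fix the normalization linking the probability daval measure $\mu$, the transversal measure $\mu_T$ and the return map, and invoke minimality and \cite{MPM1} to know that generic leaves are non-compact lines meeting $T$ infinitely often in both directions and that $R_T$ is $\mu$-a.e. invertible. The rest is a routine application of Birkhoff's theorem along forward and backward orbits.
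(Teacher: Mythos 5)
Your proof is correct and follows essentially the same route as the paper: the paper also observes that extremal points of $\cC_{RS}({f,S})$ come from ergodic measures, and then reduces the first inclusion to Theorem \ref{thm:Ruelle-1-solenoid}, whose proof is precisely the Birkhoff argument you inline (with the homology-valued map $\varphi_T$ in place of your form-by-form pairings $\phi_\omega$, and $l_T$ in place of your $\tau$). The only cosmetic difference is that the paper keeps the argument modular by citing Theorem \ref{thm:Ruelle-1-solenoid}, whereas you unfold it; the normalization choices are also slightly different (the paper normalizes $\mu(S)=1$, making $\bar\tau=1$), but this does not affect correctness.
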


\begin{proof}
We have seen that the points in $\partial^* \cC_{RS} ({{f,S}})$
come from ergodic measures in $\cM_\cL (S)$ by the Ruelle-Sullivan
map. Therefore it is enough to prove the following theorem that
shows that the Schwartzman cluster of almost all leaves is reduced
to the generalized current for an ergodic $1$-solenoid.
\end{proof}

\begin{theorem}\label{thm:Ruelle-1-solenoid}
Let $S$ be a minimal $1$-solenoid endowed with an ergodic measure
$\mu \in \cM_\cL (S)$. Consider an immersion
$f:S\to M$. Then for $\mu$-almost all leaves $c\subset S$ we have that
$f\circ c$ is a Schwartzman asymptotic $1$-cycle and
 $$
 [f\circ c]=[f,S_\mu] \in H_1(M, \RR)\, .
 $$
Therefore the immersion  $f:S_\mu\to M$ represents its Ruelle-Sullivan
homology class.

In particular, this homology class is independent of the metric $g$
on $M$ up to a scalar factor.
\end{theorem}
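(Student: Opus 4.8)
The plan is to reduce the statement to Birkhoff's ergodic theorem applied to the return map of a suitable transversal. First I would fix a local transversal $T$ small enough that $f(T)$ is contained in a contractible ball $B\subset M$; by minimality of $S$ the return map $R_T\colon T\to T$ is well defined, and the ergodic daval measure $\mu\in\cM_\cL(S)$ corresponds (via Theorem \ref{thm:transverse-riemannian}) to an ergodic transversal measure $\mu_T$, invariant under $R_T$. After normalizing so that the associated daval measure has mass $1$, the return time $r(x)=l([x,R_T(x)])$ (arc-length along the leaf between consecutive hits of $T$) is an $L^1(\mu_T)$ function whose integral is $1$; here I would invoke that $1$-solenoids satisfy the controlled growth condition, so Theorem \ref{thm:desintegration-Schwartzman-measures} applies and all the relevant measures disintegrate as length along leaves. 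Similarly, for each integer cohomology class choose (as in the proof of Theorem \ref{thm:11.9}) a closed $1$-form $\omega$ representing it and vanishing on $B$; then $\Phi_\omega(x):=\int_{[x,R_T(x)]}f^*\omega$ is also in $L^1(\mu_T)$, and by Definition \ref{def:Ruelle-Sullivan} its integral against $\mu_T$ equals $\la[f,S_\mu],[\omega]\ra$.

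Next I would apply Birkhoff's ergodic theorem to the ergodic system $(T,R_T,\mu_T)$ and the functions $r$ and $\Phi_\omega$ simultaneously (and over a countable spanning set of integer cohomology classes, so the exceptional null set is a countable union hence still null). For $\mu_T$-almost every $x\in T$, letting $c$ be the leaf through $x$ parametrized by arc-length positively with $c(0)=x$, and letting $t_N$ be the arc-length at which $c$ makes its $N$-th return to $T$, we get $t_N/N\to\int r\,d\mu_T=1$ and $\int_{c([0,t_N])}f^*\omega=\sum_{j=0}^{N-1}\Phi_\omega(R_T^j x)\to N\la[f,S_\mu],[\omega]\ra+o(N)$. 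Dividing, $\frac{1}{t_N}\int_{c([0,t_N])}f^*\omega\to\la[f,S_\mu],[\omega]\ra$. Running the same argument for the backward return times (using ergodicity of $R_T^{-1}$, or the two-sided Birkhoff theorem) handles $s\to-\infty$, so the positive and negative Schwartzman classes both exist and equal $[f,S_\mu]$; by Proposition \ref{prop:criterium-Schwartzman-cycle}, $f\circ c$ is a Schwartzman asymptotic $1$-cycle with $[f\circ c]=[f,S_\mu]$. To pass from the subsequence $t_N$ of return times to the full limit $t\to+\infty$, I would note that between consecutive returns the extra arc-length is bounded (return times are uniformly bounded above once we thicken $T$ to a larger transversal $T'$, exactly as in the proof of Theorem \ref{thm:11.9}), and the extra contribution to $\int f^*\omega$ is correspondingly $O(1)$, so the limit along all $t$ agrees; alternatively one may close up with short bridges inside $B$, which contribute nothing to $\omega$ since $\omega|_B=0$. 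Finally, the set of leaves hitting $T$ is all of $S$ by minimality, and the arc-length parametrization of a leaf is essentially independent of the base point up to a time shift, so the almost-everywhere statement on $T$ (with respect to $\mu_T$) transfers to a $\mu$-almost-every statement on leaves of $S$, using the disintegration $\overline{\cM}_\cT(S)\cong\cM_\cL(S)$.

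The last sentence of the theorem — metric independence of the homology class up to scalar — follows immediately once we know $[f\circ c]=[f,S_\mu]$: changing the metric $g$ on $M$ changes the induced Riemannian structure on $S$, hence only reparametrizes each leaf by arc-length with respect to a different metric, i.e. composes with an orientation-preserving homeomorphism of $\RR$. By Remark \ref{rem:8.16bis}, this can only rescale the asymptotic Schwartzman class by a nonnegative scalar; since the class is nonzero exactly when $[f,S_\mu]\neq 0$, the direction in $H_1(M,\RR)$ is metric-independent, and indeed the only effect is the scalar normalization coming from the total $g$-length, which is already built into how we normalize $\mu$.

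The main obstacle I expect is bookkeeping the passage from the discrete return-time limit to the genuine continuous limit $t\to+\infty,\ s\to-\infty$ uniformly and simultaneously — ensuring the exceptional null sets for the countably many cohomology classes and for the two time directions all combine into a single $\mu$-null set of leaves, and ensuring the "adjustment by $O(1)$" of the endpoints (to land on $T$) does not damage the limits. This is handled by the uniform bound on return times on a thickened transversal $\bar T\subset T'$ together with the hypothesis that $\omega$ vanishes on the ball $B$ containing $f(T)$, exactly the two devices already used in the proof of Theorem \ref{thm:11.9}.
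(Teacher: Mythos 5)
Your proposal follows essentially the same route as the paper's proof: a Poincar\'e return map on a transversal $T$ with $f(T)$ inside a contractible ball, Birkhoff's ergodic theorem applied simultaneously to the return-time function and the per-return homological increments, and then combining the two limits and passing to the full continuous limit via uniformly bounded return times and closings inside $B$. The only cosmetic difference is that the paper applies Birkhoff directly to the vector-valued function $\varphi_T:T\to H_1(M,\RR)$ (finite-dimensional, so no countable exceptional-set union is needed), while you do it pairing-by-pairing against a spanning set of cohomology classes; this is equivalent, and your extra remarks on controlled growth and Remark \ref{rem:8.16bis} are correct but not strictly necessary.
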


\begin{proof}
The proof is an application of Birkhoff's ergodic theorem. Choose a
small local transversal $T$ such that $f(T)\subset B$, where
$B\subset M$ is a small contractible ball. Consider the associated
Poincar\'e first return map $R_T: T\to T$. Denote by $\mu_T$ the
transversal measure supported on $T$.

For each $x\in T$ we consider $\varphi_T (x)$ to be the homology
class in $M$ of the loop image by $f$  of the leaf $[x,R_T(x)]$
closed by a segment in $B$ joining $x$ with $R_T(x)$. In this way
we have defined a measurable map 
 $$
 \varphi_T : T \to H_1(M, \ZZ) \, .
 $$
Also for $x\in S$, we denote by $l_T(x)$ the length of the leaf
joining $x$ with its first impact on $T$ (which is  $R_T(x)$ for
$x\in T$).
We have then an upper semi-continuous map 
 $$
 l_T: S \to \RR_+ \, .
 $$
Therefore $l_T$ is bounded by compactness of $S$. In particular,
$l_T$ is bounded on $T$ and thefore in $L^1(T,\mu_T)$. The
boundedness of $l_T$ implies also the boundedness of $\varphi_T$ by
lemma \ref{lem:9.1}.

Consider $x_0\in T$ and its return points $x_i=R_T^i(x_0)$. Let
$0<t_1 < t_2 < t_3 < \ldots $ be the times of return for the
positive arc-length parametrization. We have
 $$
 t_{i+1}-t_i =l_T(x_{i}) \, .
 $$
Therefore
 $$
 t_n=\sum_{i=0}^{n-1} (t_{i+1}-t_i) =\sum_{i=0}^{n-1} l_T\circ R_T^i
 (x_0) \, ,
 $$
and by Birkhoff's ergodic theorem
 $$
 \lim_{n\to +\infty } \frac{1}{n} t_n =\int_T l_T(x) \ d\mu_T (x)
 =\mu (S) =1\, .
 $$

Now observe that, by contracting $B$, we have
 \begin{align*}
 [f\circ c_{0,t_n}] &= [f\circ c_{0,t_1}]+ [f\circ c_{t_1,t_2}]+\ldots
  +[f\circ c_{t_{n-1},t_n}] \\
  &=\varphi_T(x_0) +\varphi_T\circ R_T
 (x_0)+\ldots +\varphi_T \circ R_T^{n-1}(x_0) \, .
 \end{align*}
We recognize a Birkhoff's sum and  by Birkhoff`s ergodic theorem we
get the limit
 $$
 \lim_{n\to +\infty } \frac{1}{n} [f\circ c_{0,t_n}] =\int_T \varphi_T (x) \
 d\mu_T (x) \in H_1(M,\RR)\, .
 $$
Finally, putting these results together,
 $$
 \lim_{n\to +\infty } \frac{1}{t_n} [f\circ c_{0,t_n}]=\lim_{n\to +\infty }
 \frac{[f\circ c_{0,t_n}]/n}{t_n/n} = \frac{ \int_T \varphi_T (x) \ d\mu_T
 (x)}{\int_T l_T(x) \ d\mu_T (x) } = 
 \int_T \varphi_T (x) \ d\mu_T
 (x)   \, .
 $$

Let us see that this equals the generalized current. Take a
closed $1$-form $\omega\in \Omega^1(M)$, which we can assume to
vanish on $B$. Then
 $$
 \la [f,S_\mu],\omega\ra = \int_T \left(\int_{[x,R_T(x)]} f^*\omega
 \right) d\mu_T(x) = \int_T \la \varphi_T(x),\omega\ra d\mu_T(x)\,
 ,
 $$
and so
 $$
 [f,S_\mu] = \int_T \varphi_T(x)\, d\mu_T(x)\, .
 $$

Observe that so far we have only proved that $\cC_+^g(f\circ
c)=\{[f,S_\mu]\}$ for almost all leaves $c\subset S$. Considering
the reverse orientation, the result follows for the negative
clusters, and finally for the whole cluster of almost all leaves.

The last statement follows since $[f,S_\mu]$ only depends on
$\mu\in\cM_\cT(S)$, which is independent of the metric up to scalar
factor, thanks to the isomorphism of theorem \ref{thm:transverse-riemannian}.
\end{proof}

Therefore for a minimal oriented ergodic $1$-solenoid, the generalized
current coincides with the Schwartzman asymptotic homology
class of almost all leaves. It
is natural to ask when this holds for all leaves, i.e. when the
solenoid fully represents the generalized current. This
indeed happens when the solenoid $S$ is uniquely ergodic (unique
ergodicity for a $1$-solenoid implies that all orbits are dense and
therefore minimality, by proposition 5.8 in \cite{MPM1}).

\begin{theorem} \label{thm:11.12} Let $S$ be a uniquely
ergodic oriented $1$-solenoid, and let $\cM_\cL (S)=\{\mu \}$. Let
$f:S\to M$ be an immersion. Then for each leaf $c\subset S$ we
have that $f\circ c$ is a Schwartzman asymptotic cycle with
 $$
 [f\circ c]=[f,S_\mu] \in H_1(M,\RR) \, ,
 $$
and we have
 $$
 \cC^g(f\circ c)=\cC^g ({{f,S}})=\PP\cC_{RS}({{f,S}})=\{ [f,S_\mu]\} \subset
 H_1(M,\RR ) \, .
 $$

Therefore $f:S\to M$ fully represents its Ruelle-Sullivan homology class
$[f, S_\mu]$.
\end{theorem}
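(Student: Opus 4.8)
The plan is to reduce the statement to the ergodic case already handled in Theorem~\ref{thm:Ruelle-1-solenoid} and to the uniform convergence of Birkhoff averages that characterizes unique ergodicity. First I would recall that for a uniquely ergodic $1$-solenoid, unique ergodicity implies minimality (proposition 5.8 in \cite{MPM1}), so every leaf is dense and the hypotheses of the previous theorems apply; moreover by Corollary~\ref{cor:volume} the single measure $\mu\in\cM_\cL(S)$ is the normalized volume, and by Theorem~\ref{thm:transverse-riemannian} it corresponds to the unique (normalized) transversal measure $(\mu_T)$.

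Next I would set up the same apparatus as in the proof of Theorem~\ref{thm:Ruelle-1-solenoid}: pick a small local transversal $T$ with $f(T)$ contained in a contractible ball $B\subset M$, form the Poincar\'e return map $R_T:T\to T$, the bounded return-time function $l_T:T\to\RR_+$ and the bounded homology-valued cocycle $\varphi_T:T\to H_1(M,\ZZ)$ (the class of the image under $f$ of a leaf segment $[x,R_T(x)]$ closed inside $B$). The key point is that unique ergodicity of $(T,R_T,\mu_T)$ gives \emph{uniform} convergence of Birkhoff sums of continuous functions: for every continuous $g$ on $T$, $\frac1n\sum_{i=0}^{n-1} g(R_T^i(x))\to \int_T g\,d\mu_T$ uniformly in $x\in T$. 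Applying this to $l_T$ and to each coordinate of $\varphi_T$ with respect to a basis of $H_1(M,\RR)$ (identifying $H_1$ with $\RR^b$), I get, uniformly in the starting point $x_0\in T$,
$$
\frac{t_n}{n}=\frac1n\sum_{i=0}^{n-1} l_T(R_T^i x_0)\longrightarrow 1,
\qquad
\frac1n\,[f\circ c_{0,t_n}]=\frac1n\sum_{i=0}^{n-1}\varphi_T(R_T^i x_0)\longrightarrow \int_T\varphi_T\,d\mu_T=[f,S_\mu],
$$
hence $\frac1{t_n}[f\circ c_{0,t_n}]\to[f,S_\mu]$ for every leaf, where now the times $t_n$ are the successive return times to $T$ of the arc-length parametrization starting at $x_0\in T$.

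Then I would pass from the special times $t_n$ and special starting points to arbitrary $s\to-\infty$, $t\to+\infty$ and arbitrary leaves. Given any leaf $c$ parametrized by arc-length, minimality forces it to hit $T$; using that $l_T$ is bounded (so the gaps $t_{n+1}-t_n$ are bounded) and Lemma~\ref{lem:9.1} (so that the correction from replacing $t$ by the nearest return time, a displacement of bounded length along a bounded-length closing path, contributes $o(1)$ after division by $t-s$), I conclude $\frac1{t}[f\circ c_{0,t}]\to[f,S_\mu]$ as $t\to+\infty$ independently of the leaf; applying the same to the reversed orientation gives the negative side, and since both one-sided limits equal $[f,S_\mu]$, Proposition~\ref{prop:criterium-Schwartzman-cycle} shows $f\circ c$ is a Schwartzman asymptotic $1$-cycle with $[f\circ c]=[f,S_\mu]$. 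This gives $\cC^g(f\circ c)=\{[f,S_\mu]\}$ for every leaf, hence $\cC^g(f,S)=\{[f,S_\mu]\}$ by Definition~\ref{def:clusters-Schwartzman-1-solenoid}; and $\PP\cC_{RS}(f,S)=\{[f,S_\mu]\}$ because $\cM_\cL(S)=\{\mu\}$. The "fully represents" conclusion is then immediate from Definition~\ref{def:representation-Schwartzman-1-solenoid}(2).

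The main obstacle I expect is making the uniformity genuinely uniform over \emph{all leaves and all pairs $s<t$} rather than just over return times from points of $T$: one must be careful that the approximation $t\mapsto$ (nearest return time) is controlled uniformly (this is where boundedness of $l_T$ and the universal bound on return times for a slightly larger transversal $T'\supset\bar T$, as used in the proof of Theorem~\ref{thm:11.9}, are essential), and that the $o(1)$ error terms coming from the closing paths $\gamma_{s,t}$ inside $B$ are uniform — which holds because $B$ is fixed and the closing paths can be chosen of uniformly bounded length, invoking Lemma~\ref{lem:9.1}. Everything else is a direct transcription of the ergodic argument with "a.e.\ convergence" upgraded to "uniform convergence."
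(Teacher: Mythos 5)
Your proof is correct and follows the route the paper itself intends: the paper states Theorem~\ref{thm:11.12} without a written proof, pointing before Theorem~\ref{thm:1.4} to the ``well known'' uniform convergence of Birkhoff sums in uniquely ergodic systems, and your argument is precisely a detailed unwinding of that remark, reusing the apparatus ($T$, $R_T$, $l_T$, $\varphi_T$) from the proof of Theorem~\ref{thm:Ruelle-1-solenoid} and then upgrading almost-everywhere to uniform convergence. The reduction from return times and base points in $T$ to arbitrary $s<t$ and arbitrary leaves, via boundedness of $l_T$ and Lemma~\ref{lem:9.1}, and the final appeal to Proposition~\ref{prop:criterium-Schwartzman-cycle}, are also exactly what one would write out.

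One point you should make explicit: the uniform ergodic theorem you invoke applies to \emph{continuous} observables, whereas the proof of Theorem~\ref{thm:Ruelle-1-solenoid} only records that $l_T$ is upper semi-continuous on $S$ and that $\varphi_T$ is measurable. For the argument to close you need to observe that, restricted to the transversal $T$, the return time $l_T$ is actually continuous (the Poincar\'e map $R_T$ is a homeomorphism of $T$, and for a $C^{r,s}$ solenoid with $r\geq 1$ the arc-length of the return segment varies continuously with the base point), and that $\varphi_T$ is locally constant on $T$ (nearby starting points give $C^0$-close return segments, and since $f(T)\subset B$ with $B$ contractible the resulting closed loops are homotopic, hence homologous). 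With these continuity observations uniform Birkhoff convergence is legitimate and your proof is complete.
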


\section{Schwartzman $k$-dimensional cycles} \label{sec:k-schwartzman}

We study in this section how to extend Schwartzman theory to
$k$-dimensional submanifolds of $M$. We assume that
$M$ is a compact $C^\infty$ Riemannian manifold.

Given an immersion
$c: N \to M$ from an oriented smooth manifold $N$ of dimension
$k\geq 1$, it is natural to consider exhaustions $(U_n)$ of $N$ with
$U_n\subset N$ being $k$-dimensional compact submanifolds with
boundary $\bd U_n$. We close $U_n$ with a $k$-dimensional oriented
manifold $\G_n$ with boundary $\bd \G_n=-\bd U_n$ (that is, $\bd
U_n$ with opposite orientation, so that $N_n=U_n\cup \G_n$ is a
$k$-dimensional compact oriented manifold without boundary), in such
a way that $c_{|U_n}$ extends to a piecewise 
smooth map $c_n:N_n \to M$. We may consider the associated homology
class $[c_n(N_n)]\in H_k(M,\ZZ)$. By analogy with section
\ref{sec:1-schwartzman}, we consider
  \begin{equation}\label{eqn:cn(Nn)}
  \frac{1}{t_n} [c_n(N_n) ] \in H_k(M,\RR ) \, ,
  \end{equation}
for increasing sequences $(t_n)$, $t_n>0$, and $t_n \to +\infty$,
and look for sufficient conditions for (\ref{eqn:cn(Nn)}) to have
limits in $H_k(M,\RR )$. Lemma \ref{lem:9.1} extends to higher
dimension to show that, as long as we keep control of the $k$-volume
of $c_n(\G_n)$, the limit is independent of the closing procedure.

\begin{lemma} \label{lem:closing-k-dim}
Let $(\G_n)$ be a sequence of closed (i.e. compact without boundary)
oriented $k$-dimensional manifolds with piecewise smooth maps $c_n :
\G_n \to M$, and let $(t_n)$ be a sequence with $t_n>0$ and $t_n\to
+\infty$. If
 $$
 \lim_{n\to +\infty } \frac{{\Vol}_k (c_n(\G_n) )}{t_n}=0 \, ,
 $$
then in $H_k(M,\RR)$ we have
 $$
 \lim_{n\to +\infty} \frac{[c_n(\G_n)]}{t_n} =0 \, .
 $$
\end{lemma}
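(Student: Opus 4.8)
The plan is to mimic exactly the proof of Lemma \ref{lem:9.1}, replacing loops by closed $k$-manifolds and the pairing against $1$-forms by the pairing against $k$-forms. First I would recall that $H_k(M,\RR)$ is finite-dimensional, so it carries a unique topological vector space structure, and its dual is $H^k(M,\RR)$, represented by closed $k$-forms. Fixing the Riemannian metric on $M$, I would equip $\Omega^k(M)$ with the $C^0$-norm $\|\omega\|_{C^0}$ (the supremum over $M$ of the pointwise comass, or simply the pointwise operator norm on $k$-vectors), and then define on $H^k(M,\RR)$ the quotient norm $\|[\omega]\|_{C^0}=\min_{\omega'\in[\omega]}\|\omega'\|_{C^0}$, with the associated operator norm on $H_k(M,\RR)\cong(H^k(M,\RR))^*$.

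The key estimate is that for any closed oriented piecewise-smooth $k$-cycle $c_n(\G_n)$ and any closed $k$-form $\omega$,
$$
\left|\langle [c_n(\G_n)],[\omega]\rangle\right| = \left|\int_{\G_n} c_n^*\omega\right| \le {\Vol}_k(c_n(\G_n))\,\|\omega\|_{C^0} \le {\Vol}_k(c_n(\G_n))\,\|[\omega]\|_{C^0}\, ,
$$
where the first inequality is the standard bound of an integral of a $k$-form over an immersed (piecewise-smooth) $k$-chain by the $k$-volume of its image times the comass of the form, and the last inequality uses that $\|\omega\|_{C^0}\ge\|[\omega]\|_{C^0}$ for any form representing $[\omega]$. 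Taking the supremum over $[\omega]$ with $\|[\omega]\|_{C^0}\le 1$ gives the operator-norm bound
$$
\bigl\|[c_n(\G_n)]\bigr\| \le {\Vol}_k(c_n(\G_n))\, .
$$
Dividing by $t_n$ and invoking the hypothesis ${\Vol}_k(c_n(\G_n))/t_n\to 0$ yields $\|[c_n(\G_n)]/t_n\|\to 0$, i.e.\ $[c_n(\G_n)]/t_n\to 0$ in $H_k(M,\RR)$, as claimed.

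The only genuinely delicate point — the step I expect to be the main obstacle — is justifying the inequality $\bigl|\int_{\G_n} c_n^*\omega\bigr|\le {\Vol}_k(c_n(\G_n))\,\|\omega\|_{C^0}$ carefully for \emph{piecewise-smooth} maps $c_n$ rather than immersions: one must integrate over the pieces where $c_n$ is smooth, note that $\Vol_k$ is measured with multiplicity (or bound it by the image volume counted once if $c_n$ is generically injective — but in general one should keep multiplicities, or simply interpret ${\Vol}_k(c_n(\G_n))$ as $\int_{\G_n}|\det(c_n^*g)|^{1/2}$), and observe that the pieces where $c_n$ fails to be an immersion contribute zero to both sides. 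Everything else is a verbatim transcription of the one-dimensional argument, so I would present the proof essentially as a one-line appeal to that argument together with the comass estimate above.
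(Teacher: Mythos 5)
Your proposal is correct and is exactly the argument the paper intends: the paper does not write out a proof of Lemma \ref{lem:closing-k-dim}, but simply states that it ``follows the same lines'' as Lemma \ref{lem:9.1}, and your transcription of that argument (pairing against $k$-forms, comass/$C^0$-norm on $H^k(M,\RR)$, operator norm on $H_k(M,\RR)$) is the intended one. Your remarks about interpreting $\Vol_k(c_n(\G_n))$ with multiplicity as $\int_{\G_n}|\det(c_n^*g)|^{1/2}$ and about the sets where $c_n$ fails to be an immersion are the right thing to say. One small wrinkle, inherited verbatim from the paper's own proof of Lemma \ref{lem:9.1}: the displayed chain $\left|\int_{\G_n} c_n^*\omega\right|\leq \Vol_k(c_n(\G_n))\|\omega\|_{C^0}\leq \Vol_k(c_n(\G_n))\|[\omega]\|_{C^0}$ is stated with the last inequality going the wrong way (as you yourself note, $\|\omega\|_{C^0}\geq\|[\omega]\|_{C^0}$, which would give the reverse); the correct justification is that the left-hand pairing is independent of the representative $\omega\in[\omega]$, so one may pass to the infimum over representatives to replace $\|\omega\|_{C^0}$ by $\|[\omega]\|_{C^0}$. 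This is a phrasing slip, not a gap in the argument.
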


The proof follows the same lines as the proof of lemma
\ref{lem:9.1}. We define now $k$-dimensional Schwartzman asymptotic
cycles.

\begin{definition} \textbf{\em (Schwartzman asymptotic $k$-cycles and clusters)}
\label{def:schwartzman-cluster-k-dim} Let $c:N\to M$ be an immersion
from a $k$-dimensional oriented manifold $N$ into $M$. For all
increasing sequences $(t_n)$, $t_n\to +\infty$, and exhaustions
$(U_n)$ of $N$ by $k$-dimensional compact submanifolds with
boundary,
we consider all possible Schwartzman limits
 $$
 \lim_{n\to +\infty } \frac{[c_n(N_n)]}{t_n} \in H_k(M,\RR) \, ,
 $$
where $N_n=U_n\cup \G_n$ is a closed oriented manifold with
 \begin{equation}\label{eqn:condition}
 \frac{\Vol_k (c_n(\G_n) )}{t_n } \to 0 \, .
 \end{equation}
Each such limit is called a Schwartzman asymptotic $k$-cycle. These
limits form the Schwartzman cluster $\cC ({c,N})\subset H_k(M,\RR)$
of $N$.
\end{definition}

Observe that a Schwartzman limit does not depend on the choice of
the sequence $(\G_n)$, as long as it satisfies
(\ref{eqn:condition}). Note that this condition is independent of
the particular Riemannian metric chosen for $M$.

As in dimension $1$ we have

\begin{proposition}\label{prop:schwartzman-cluster-k-dim}
The Schwartzman cluster $\cC (c,N)$ is a closed cone of
$H_k(M,\RR)$.
\end{proposition}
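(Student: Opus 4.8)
The plan is to mimic exactly the argument of Proposition~\ref{prop:8.15.si}, which treated the analogous $1$-dimensional statement. There are two things to prove: that $\cC(c,N)$ is non-empty (or at least to set up the cone structure, noting that emptiness is possible if the exhaustion grows too fast, so strictly we only assert it is a \emph{closed cone}, possibly degenerate), that it is a cone, and that it is closed. The cone property is the easy part: given a Schwartzman limit $a = \lim_n [c_n(N_n)]/t_n$ realized along an exhaustion $(U_n)$, cappings $(\Gamma_n)$ satisfying $\Vol_k(c_n(\Gamma_n))/t_n \to 0$, and a scalar sequence $(t_n)$, one obtains $\lambda a$ for any $\lambda>0$ simply by replacing $(t_n)$ with $(t_n/\lambda)$; condition (\ref{eqn:condition}) is preserved since $\Vol_k(c_n(\Gamma_n))/(t_n/\lambda)=\lambda\,\Vol_k(c_n(\Gamma_n))/t_n\to 0$, and the rescaled limit is $\lambda a$. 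Hence $\cC(c,N)$ is stable under multiplication by positive scalars.

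The main work is closedness. First I would fix, once and for all, a way of capping: for each pair consisting of a compact submanifold-with-boundary $U\subset N$ and its boundary $\partial U$, choose a capping manifold $\Gamma$ with $\partial\Gamma=-\partial U$ and an extension of $c|_U$ to $c:N\cup\Gamma\to M$ whose extra $k$-volume $\Vol_k(c(\Gamma))$ depends only on (a bounded neighborhood of) $\partial U$ — e.g. coning off each boundary component in $M$ in a controlled manner — so that the integer homology class $[c_n(N_n)]$ depends only on the exhaustion piece $U_n$ and not on auxiliary choices. Now take $a_n\in\cC(c,N)$ with $a_n\to a$ in $H_k(M,\RR)$. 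Each $a_n$ is witnessed by an exhaustion $(U^{(n)}_m)_m$, a scale sequence $(t^{(n)}_m)_m$, and cappings, so we may pick indices $m_n$ with $\bigl\|[c_{m_n}(N^{(n)}_{m_n})]/t^{(n)}_{m_n}-a\bigr\|\le 1/n$ and with $\Vol_k(c_{m_n}(\Gamma^{(n)}_{m_n}))/t^{(n)}_{m_n}\le 1/n$; moreover, since in the definition of a Schwartzman limit we may take $m_n$ (hence the size of $U^{(n)}_{m_n}$ and the value $t^{(n)}_{m_n}$) as large as we wish, we arrange that the submanifolds $V_n:=U^{(n)}_{m_n}$ are nested, $V_1\subset V_2\subset\cdots$, and exhaust $N$, and that $(t^{(n)}_{m_n})$ is increasing to $+\infty$. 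Then $(V_n)$ is a single exhaustion of $N$, the associated cappings satisfy $\Vol_k(c_n(\Gamma_n))/t_n\to 0$ by construction, and $[c_n(\widehat V_n)]/t_n\to a$, so $a\in\cC(c,N)$.

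The delicate step — and the one I expect to be the main obstacle — is the bookkeeping needed to make the nested exhaustion $(V_n)$ genuinely exhaust $N$ while simultaneously keeping the capping volumes under control. In the $1$-dimensional case (Proposition~\ref{prop:8.15.si}) this was handled by reparametrizing $\RR$, which is combinatorially trivial; here one must instead verify that, given any two compact submanifolds-with-boundary $U\subset U'$ of $N$ sitting inside different candidate exhaustions, one can interpolate an exhaustion passing through both, and that enlarging $V_n$ to absorb the earlier $V_{n-1}$ changes the normalized homology class and the normalized capping volume by $o(1)$ — which follows from Lemma~\ref{lem:closing-k-dim} applied to the ``annular'' region $V_n\setminus V_{n-1}$ together with the freedom to take $t_n$ as large as needed. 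Once this interpolation is in place, the rest is formal. The appeal to Lemma~\ref{lem:closing-k-dim} also reproves independence of the limit from the capping choices, exactly as stated after Definition~\ref{def:schwartzman-cluster-k-dim}.
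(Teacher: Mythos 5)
The paper gives no proof of this proposition; it is stated immediately after the remark ``As in dimension $1$ we have,'' the intended argument being the analogue of Propositions~\ref{prop:closed-clusters} and~\ref{prop:8.15.si}. Your proof is correct, and is exactly that analogue: rescaling $(t_n)\mapsto(t_n/\lambda)$ gives the cone property, and a diagonal construction of a single exhaustion $(V_n)$ gives closedness.

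Two simplifications are worth noting. First, the ``canonical capping'' you set up at the outset is never actually used: in the diagonal argument you simply carry along the cappings $\Gamma^{(n)}_{m_n}$ that already witness $a_n\in\cC(c,N)$, and there is no need for the integer class $[c_n(N_n)]$ to depend only on $U_n$ (nor could one easily arrange uniformly controlled fillings of arbitrary boundary $(k-1)$-manifolds in a general $M$). Second, the ``delicate step'' you flag is not delicate: since each $(U^{(n)}_m)_m$ exhausts $N$ and $V_{n-1}$ is compact, the three constraints $V_{n-1}\subset U^{(n)}_m$, $\bigl\|[c_m(N^{(n)}_m)]/t^{(n)}_m - a\bigr\|\le 1/n$ (after passing to a subsequence of $(a_n)$ so that $\|a_n-a\|\le 1/(2n)$), and $\Vol_k(c_m(\Gamma^{(n)}_m))/t^{(n)}_m\le 1/n$ all hold for every sufficiently large $m$; choosing $m_n$ large enough to meet them, and additionally large enough that $V_n$ contains the $n$-th term of a fixed reference exhaustion of $N$, produces the nested exhausting sequence $(V_n)$ directly. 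There is no ``enlarging'' after the fact, no interpolation across annular regions, and no appeal to Lemma~\ref{lem:closing-k-dim} is needed in the closedness argument; that lemma is only what makes the defining limits independent of the choice of cappings, as stated after Definition~\ref{def:schwartzman-cluster-k-dim}.
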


The Riemannian structure on $M$ induces a Riemannian structure on
$N$ by pulling back by $c$. We define the Riemannian exhaustions
$(U_n)$ of $N$ as exhaustions of the form
  $$
  U_n=\bar B(x_0 , R_n) \, ,
  $$
i.e. the $U_n$ are Riemannian (closed) balls in $N$ centered at a
base point $x_0\in N$ and $R_n\to +\infty$. If the $R_n$ are
generic, then the boundary of $U_n$ is smooth

We define the Riemannian Schwartzman cluster of $N$ as follows. It
plays the role of the balanced Riemannian cluster of section
\ref{sec:1-schwartzman} for dimension $1$.

\begin{definition}\label{def:schwartzman-cluster-k-dim2}
The Riemann-Schwartzman cluster of $c:N\to M$, $\cC^g({c,N})$, is the
set of all limits, for all Riemannian exhaustions $(U_n)$,
 $$
 \lim_{n\to +\infty } \frac{1}{\Vol_k (c_n(N_n) )} [c_n(N_n)] \in
 H_k(M,\RR) \, ,
 $$
 such that $N_n=U_n\cup \G_n$ and
 \begin{equation} \label{eqn:condition2}
\frac{\Vol_k (c_n(\Gamma_n) )}{\Vol_k (c_n(N_n) )}\to 0\, .
  \end{equation}
All such limits are called Riemann-Schwartzman asymptotic
$k$-cycles.
\end{definition}

\begin{definition}\label{def:regular-asymptotic-k-cycles}
The immersed manifold $c:N\to M$ represents a homology class $a\in
H_k(M,\RR )$ if the Riemann-Schwartzman cluster $\cC^g({c,N})$
contains only $a$,
 $$
 \cC^g({c,N})=\{ a\} \, .
 $$
We denote $[{c,N}]=a$, and call it the Schwartzman homology class of $(c,N)$.
\end{definition}

Now we can define the notion of representation of homology classes
by immersed solenoids extending definition
\ref{def:representation-Schwartzman-1-solenoid} to higher dimension.

\begin{definition} \textbf{\em (Schwartzman representation of homology classes)}
\label{def:representation-Schwartzman-k-solenoid} Let $f:S\to M$ be an
immersion in $M$ of an oriented $k$-solenoid $S$. Then $S$ is a
Riemannian solenoid with the pull-back metric $f^* g$.
\begin{enumerate}
\item If $S$ is endowed with a transversal measure
 $\mu=(\mu_T)\in \cM_\cT (S)$, the immersed solenoid $f:S_\mu\to M$
 represents a homology class $a\in H_1(M, \RR)$ if for
 $(\mu_T)$-almost all leaves $l\subset S$, we have that $(f,l)$ is a
 Riemann-Schwartzman asymptotic $k$-cycle with $[{f,l}]=a$.

\item The immersed solenoid $f:S\to M$ fully represents a homology
 class $a\in H_1(M, \RR)$ if for all leaves $l\subset S$,
 we have that $(f,l)$ is a
 Riemann-Schwartzman asymptotic $k$-cycle with $[{f,l}]=a$.
\end{enumerate}
\end{definition}

\begin{definition}\textbf{\em (Equivalent exhaustions)}
Two exhaustions $(U_n)$ and $(\hat U_n)$ are equivalent if
 $$
 \frac{\Vol_k (U_n-\hat U_n) +\Vol_k(\hat U_n -U_n )}{\Vol_k (U_n)}\to 0 \, .
 $$
\end{definition}

Note that if two exhaustions $(U_n)$ and $(\hat U_n)$ are equivalent, then
 $$
 \frac{\Vol_k (\hat U_n)}{\Vol_k (U_n)}\to 1 \, .
 $$
Moreover, if $N_n=U_n\cup \Gamma_n$ are closings satisfying
(\ref{eqn:condition2}), then we may close $\hat U_n$ as follows:
after slightly modifying $\hat U_n$ so that $U_n$ and $\hat U_n$
have boundaries intersecting transversally, we glue $F_1=U_n-\hat
U_n$ to $\hat U_n$ along $F_1\cap \bd \hat U_n$, then we glue a copy
of $F_2=\hat U_n- U_n$ (with reversed orientation) to $\hat U_n$
along $F_2\cap \bd \hat U_n$. The boundary of $\hat U_n\cup F_1\cup
F_2$ is homeomorphic to $\bd U_n$, so we may glue $\G_n$ to it, to
get $\hat N_n=\hat U_n\cup F_1\cup F_2 \cup \Gamma_n$. Note that
 $$
 \Vol_k(\hat N_n) =\Vol_k(N_n) + 2\Vol_k (\hat U_n- U_n) \approx \Vol_k(N_n)\,.
 $$
Define $\hat{c}_n$ by $\hat{c}_{n|F_1}= c_{|(U_n-\hat U_n)}$,
$\hat{c}_{n|F_2}= c_{|(\hat U_n- U_n)}$ and $\hat{c}_{n|\G_n}=
c_{n|\G_n}$. Then
 $$
 [c_n(N_n)] = [\hat{c}_n(\hat N_n)]\,,
 $$
so both exhaustions define the same Schwartzman asymptotic
$k$-cycles.

\begin{definition}\textbf{\em (Controlled solenoid)} \label{def:controlled}
Let $V\subset S$ be an open subset of a solenoid $S$.
We say that $S$ is controlled by $V$ if
for any Riemann exhaustion $(U_n)$ of any leaf of $S$ there is an
equivalent exhaustion $(\hat U_n)$ such that for all $n$ we have
$\partial \hat U_n \subset V$. 
\end{definition}

\begin{definition}\textbf{\em (Trapping region)} \label{def:trapping}
An open subset $W\subset S$ of a solenoid $S$ is a trapping region
if there exists a continuous map $\pi : S \to \TT$ such that
\begin{enumerate}
\item[(1)] For some $0<\epsilon_0<1/2$, $W=\pi^{-1} ((-\epsilon_0 ,\epsilon_0))$.

\medskip

\item[(2)]  There is a global transversal $T\subset \pi^{-1} (\{ 0\} )$.

\medskip

\item[(3)] Each connected component of $\pi^{-1} (\{ 0\} )$ intersects $T$ in exactly
one point.

\medskip

\item[(4)] $0$ is a regular value for $\pi$, that is, $\pi$ is smooth
in a neighborhood of $\pi^{-1} (\{ 0\} )$ and it $d\pi$ is surjective
at each point of $\pi^{-1} (\{ 0\} )$ (the differential $d\pi$ is
understood leaf-wise).


\medskip

\item[(5)] For each connected component $L$ of $\pi^{-1} (\TT-\{0\} )$ we
have ${\overline L}\cap T=\{ x,y\}$, where $\{x\} \in {\overline L}\cap T\cap
\pi^{-1} ((-\epsilon_0 ,0])$ and $\{y\} \in {\overline L}\cap T\cap \pi^{-1}
([0,\epsilon_0 ))$. We define $R_T : T\to T$ by $R_T(x)=y$.
\end{enumerate}

\end{definition}

Let $C_x$ be the (unique) component of $\pi^{-1} (\{ 0\} )$ through
$x\in T$. By (4), $C_x$ is a smooth $(k-1)$-dimensional manifold. By
(5), there is no holonomy in $\pi^{-1} ((-\epsilon_0 ,\epsilon_0
))$, so $C_x$ is a compact submanifold. Let $L_x$ be the connected
component of $\pi^{-1} (\TT-\{0\} )$ with ${\overline L_x}\cap T=\{
x,y\}$. This is a compact manifold with boundary
 \begin{equation}\label{eqn:Lx}
 \bd \overline L_x=C_x\cup C_y =C_x\cup C_{R_T(x)} \, .
  \end{equation}

\begin{proposition}\label{prop:trapping}
If $S$ has a trapping region $W$ with global transversal $T$, then
holonomy group of $T$ is generated by the map $R_T$.
\end{proposition}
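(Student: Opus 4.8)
The plan is to show that every holonomy map of the global transversal $T$ is a composition of powers of $R_T$ (and its inverse), so that $R_T$ generates the holonomy group. First I would use the trapping condition to understand the structure of leaves relative to $T$. By condition (3), each connected component of $\pi^{-1}(\{0\})$ meets $T$ in exactly one point, so the points of $T$ are in bijection with these components $C_x$. By condition (5), each component $L$ of $\pi^{-1}(\TT - \{0\})$ is a ``tube'' with $\overline{L}\cap T = \{x, y\}$, and $R_T$ records the passage $x \mapsto y$ across such a tube. Crucially, by condition (1) the region $W = \pi^{-1}((-\epsilon_0,\epsilon_0))$ has no holonomy (it retracts onto $\pi^{-1}(\{0\})$ leafwise with no transverse monodromy, since $\epsilon_0 < 1/2$ and $0$ is a regular value), so any holonomy along a path contained in $W$ is trivial; in particular, the holonomy germ along a path is unchanged if we push its endpoints to $T$ within $W$.

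Next I would take an arbitrary holonomy map $h : T_1 \to T_2$ between local transversals, represented by a path $\alpha$ in a leaf from a point $p \in T_1$ to a point $q \in T_2$. Since $T$ is a global transversal, I can slide $T_1$ and $T_2$ along $W$ to $T$ itself (using triviality of holonomy in $W$), reducing to the case where $\alpha$ is a leaf-path with both endpoints on $T$. Now the key step: decompose $\alpha$ according to its successive crossings of the ``slice'' $\pi^{-1}(\{0\})$. Each time the leaf-path passes through a tube $L_x$ from one boundary component to the other, it realizes either $R_T$ or $R_T^{-1}$ (the sign determined by the direction of crossing, i.e. whether $\pi \circ \alpha$ is increasing or decreasing there — compare conditions (4), (5)). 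Between consecutive crossings the path stays in a single tube or in $W$, contributing trivial holonomy. Concatenating, $h = R_T^{\pm 1}\circ \cdots \circ R_T^{\pm 1}$, so $h$ lies in the group generated by $R_T$. Conversely $R_T$ itself is a holonomy map by construction, so the holonomy group is exactly $\langle R_T\rangle$.

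The main obstacle I expect is making the ``decompose the path by its crossings of $\pi^{-1}(\{0\})$'' step rigorous: one must argue that a leaf-path can be homotoped (rel endpoints, within its leaf) so that it is transverse to $\pi^{-1}(\{0\})$ and meets it in finitely many points, and that the holonomy germ is invariant under this homotopy. This uses that $0$ is a regular value (condition (4)) so $\pi^{-1}(\{0\})$ is a codimension-one leafwise submanifold, together with compactness of $C_x$ and $\overline{L_x}$ (noted after Definition \ref{def:trapping}, using that there is no holonomy in $W$) to guarantee that only finitely many crossings occur on a compact path. A minor additional point is orientation bookkeeping to match each crossing with $R_T$ versus $R_T^{-1}$, which follows from condition (5) identifying the two ends of each tube with $\pi^{-1}((-\epsilon_0,0])\cap T$ and $\pi^{-1}([0,\epsilon_0))\cap T$ respectively.
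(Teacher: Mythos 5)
Your proposal is correct and follows essentially the same approach as the paper: homotope the leaf-path so that it crosses $\pi^{-1}(\{0\})$ through $T$, split it at the crossings into sub-paths each lying in some $\overline{L_x}$, and identify each sub-path's holonomy contribution as $R_T$, $R_T^{-1}$, or the identity. Your preliminary step of reducing from arbitrary local transversals to $T$ via the triviality of holonomy in $W$, and your discussion of transversality and finiteness of crossings, merely fill in details the paper leaves implicit.
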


\begin{proof}
If $\g$ is a path with endpoints in $T$, we may homotop it so that
each time it traverses $\pi^{-1} (\{ 0\} )$, it does it through $T$.
Then we may split $\g$ into sub-paths such that each path has
endpoints in $T$ and no other points in $\pi^{-1} (\{ 0\} )$. Each
of this sub-paths therefore lies in some $\overline L_x$ and has
holonomy $R_T$, $R_T^{-1}$ or the identity. The result follows.
\end{proof}

\begin{theorem}\label{thm:trapping}
A solenoid $S$ with a trapping region $W$ is controlled by $W$.
\end{theorem}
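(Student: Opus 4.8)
The goal is to show that a solenoid $S$ with a trapping region $W$ is controlled by $W$ in the sense of Definition \ref{def:controlled}: given any Riemann exhaustion $(U_n)$ of a leaf $l$, I must produce an equivalent exhaustion $(\hat U_n)$ with $\bd \hat U_n\subset W$. The natural strategy is to push the boundary $\bd U_n$ off of the ``bad'' locus and into $W$ by following the flow transverse to $\pi^{-1}(\{0\})$, using the product structure that (4) and (5) give on $W$ (no holonomy, hence $W\cong (-\epsilon_0,\epsilon_0)\times T$ leaf-wise, with the sets $C_x$ as the slices). First I would set up this local normal form: near $\pi^{-1}(\{0\})$ the map $\pi$ is a submersion with compact fibers $C_x$, so there is a tubular neighborhood identification under which the leaf $l$ looks like a union of ``tubes'' $\overline{L_x}$ glued along the $C_x$'s, and the return map $R_T$ describes how they are glued.

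Next, the key construction. Given $U_n$, look at $\bd U_n$. I would modify $U_n$ to $\hat U_n$ by the following cut-and-paste: wherever $\bd U_n$ meets $S\setminus W$, I cut along the nearest fiber $C_x$ of $\pi^{-1}(\{0\})$ — equivalently, I either add the rest of the tube $\overline{L_x}$ to $U_n$ (completing a partial tube to a full one, bounded by $C_x$'s which lie in $\pi^{-1}(\{0\})\subset W$) or delete a partial tube from $U_n$. Since each $C_x$ is a compact $(k-1)$-manifold sitting in $\pi^{-1}(\{0\})\subset W$, the resulting $\hat U_n$ has $\bd\hat U_n\subset\pi^{-1}(\{0\})\subset W$. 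This is exactly the ``all full disks vs.\ partial pieces'' decomposition $C_n\cap U=A_n\cup B_n$ familiar from Definition \ref{def:controlled-growth}, now applied tube-by-tube instead of flowbox-by-flowbox.

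It then remains to prove $(U_n)$ and $(\hat U_n)$ are equivalent, i.e.\ $\Vol_k(U_n\triangle\hat U_n)/\Vol_k(U_n)\to 0$. The symmetric difference $U_n\triangle\hat U_n$ is contained in the union of those tubes $\overline{L_x}$ that $\bd U_n$ actually crosses. Each tube has volume bounded by a uniform constant $V_0$ (by compactness of $S$: there are finitely many tube-types up to holonomy, and $\overline{L_x}$ from \eqref{eqn:Lx} is compact, varying continuously in $x\in\bar T$), so $\Vol_k(U_n\triangle\hat U_n)\le V_0\cdot(\#\{\text{tubes met by }\bd U_n\})$. The number of tubes met by $\bd U_n$ is bounded by a constant times the $(k-1)$-volume of $\bd U_n$ (again by a uniform lower bound on the ``width'' of a tube). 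For Riemann exhaustions $U_n=\bar B(x_0,R_n)$ one does \emph{not} in general have $\Vol_{k-1}(\bd U_n)/\Vol_k(U_n)\to 0$, so this is the main obstacle: I expect one must either restrict to generic $R_n$ and invoke a coarea/averaging argument (for a.e.\ radius the boundary sphere has controlled size relative to the ball), or — more in the spirit of the paper — first replace $(U_n)$ by an equivalent exhaustion adapted to the tube structure for which the boundary automatically consists of full fibers $C_x$, pushing the control-of-boundary-volume problem into the already-granted equivalence relation. Concretely, I would take $\hat U_n$ to be the union of all tubes $\overline{L_x}$ entirely contained in a slightly shrunk ball $\bar B(x_0,R_n-D)$ (with $D$ the uniform tube diameter) together with all tubes meeting it; its boundary is a union of $C_x$'s, hence in $W$, and $\hat U_n\supset\bar B(x_0,R_n-D)$ while $\hat U_n\subset\bar B(x_0,R_n+D)$, so equivalence of $(\hat U_n)$ with $(U_n)$ follows once one knows $\Vol_k(\bar B(x_0,R_n+D))/\Vol_k(\bar B(x_0,R_n-D))\to 1$ along the exhaustion, which is where the hypothesis that $(U_n)$ is a genuine exhaustion (equivalently, a controlled-growth-type ratio condition on balls) must be used. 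Verifying that last volume-ratio limit, and checking it is legitimate to assume it for the exhaustions under consideration, is the step I expect to require the most care.
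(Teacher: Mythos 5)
Your construction of $\hat U_n$ (a union of full tubes $\overline L_x$, with boundary a union of fibers $C_x\subset\pi^{-1}(\{0\})\subset W$) is essentially the paper's, and you correctly discard the first ``cut-and-paste'' attempt that would require control of $\Vol_{k-1}(\bd U_n)$. The gap is in the last step, which you leave open and, more importantly, misdiagnose. You reduce to ``$\Vol_k(\bar B(R_n+D))/\Vol_k(\bar B(R_n-D))\to1$'' and suggest this is an extra hypothesis (``a controlled-growth-type ratio condition on balls that must be used/assumed''). It is not: as stated, this ratio condition is false for general Riemannian exhaustions (e.g.\ it fails under exponential volume growth), so if it really were needed as an assumption the theorem would not be what the paper claims. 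The point you are missing is that the trapping region forces the leaf to be a \emph{linear chain} $l=\bigcup_{i\in\ZZ}\overline L_{R_T^i(x_0)}$ of compact tubes whose geometry is uniformly controlled by compactness of $\bar T$: there is a lower bound $c_0>0$ on the separation of the two boundary fibers of a tube, an upper bound $c_1$ on the diameter of a tube, and an upper bound $c_2$ on its volume. A short triangle-inequality argument then shows that the annulus between the largest chain $\hat U_{a,b}$ contained in $U_n$ and the smallest chain $\hat U_{a',b'}$ containing $U_n$ consists of at most $2(c_1/c_0+2)$ tubes, hence has volume $\le 2(c_1/c_0+2)c_2$ — a \emph{uniform bound}, independent of $n$. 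Since the leaf is non-compact (the compact case is trivial: $U_N=l$ eventually), $\Vol_k(U_n)\to\infty$, so $\Vol_k(U_n\triangle\hat U_n)/\Vol_k(U_n)\to0$. In other words, you don't need the ball-ratio limit as an input; the linear-chain geometry with uniform tube bounds \emph{is} the proof, and it is precisely the content you flagged as ``the step I expect to require the most care'' without supplying.

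Two smaller remarks. First, you should explicitly record, via Proposition \ref{prop:trapping}, that the holonomy of $T$ is generated by $R_T$, so a leaf really is a single $\ZZ$-indexed chain of tubes; this is what makes the counting argument one-dimensional even though $S$ is a $k$-solenoid. Second, your $\hat U_n$ (all tubes meeting $\bar B(R_n-D)$) is sandwiched as $\bar B(R_n-D)\subset\hat U_n\subset \bar B(R_n)$, so in fact $\hat U_n-U_n=\emptyset$ and only $U_n-\hat U_n$ needs bounding; this simplifies the estimate slightly relative to what you wrote, but does not remove the need for the tube-counting argument above.
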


\begin{proof}
Fix a base point $y_0\in S$ and a exhaustion $(U_n)$ of the leaf $l$
through $y_0$ of the form $U_n=\bar B(y_0, R_n)$, $R_n\to +\infty$.
Consider $x_0\in T$ so that $y_0\in \overline
L_{x_0}$. The leaf $l$ is the infinite union
 $$
 l=\bigcup_{n\in \ZZ} \overline L_{R_T^n(x_0)} \, .
 $$
If $R_T^n(x_0)=x_0$ for some $n\geq 1$ then $l$ is a compact manifold.
Then for some $N$, we have $U_N=l$, so the controlled condition of definition
\ref{def:controlled} is satisfied for $l$.

Assume that $R_T(x_0) \not= x_0$. Then $l$ is a non-compact manifold.
For integers $a < b$, denote
  \begin{equation}\label{eqn:Uab}
  \hat{U}_{a,b}:= \bigcup_{k=a}^{b-1} \overline L_{R_T^k(x_0)} \, .
  \end{equation}
This is a manifold with boundary
 $$
 \bd \hat{U}_{a,b}= C_{R_T^a(x_0)}\cup C_{R_T^b(x_0)}\, .
 $$

Given $U_n$, pick the maximum $b\geq 1$ and minimum $a\leq 0$ such
that $\hat{U}_{a,b}\subset U_n$, and denote $\hat U_n=\hat{U}_{a,b}$
for such $a$ and $b$. Clearly $\bd \hat U_n \subset W$. Let us see
that $(U_n)$ and $(\hat U_n)$ are equivalent exhaustions, i.e. that
 $$
 \frac{\Vol_k(U_n-\hat U_n)}{\Vol_k(U_n)} \to 0\,.
 $$

Let $b'\geq 1$ the minimum and $a'\leq 0$ the maximum such that
$U_n\subset \hat{U}_{a',b'}$. Let us prove that
 $$
 \Vol_k(\hat{U}_{a',b'}-\hat{U}_{a,b})
 $$
is bounded. This clearly implies the result.

Take $y\in \overline L_{R^{b'-1}_T(x_0)} \cap U_n$. Then $d(y_0,y)\leq R_n$.
By compactness of $T$, there is a lower bound $c_0>0$ for the distance from
$C_x$ to $C_{R_T(x)}$ in $L_x$, for all $x\in T$.
Taking the geodesic path from $y_0$ to $y$, we see that there are points
in $y_i\in \overline L_{R^{b'-i}_T(x_0)}$ with $d(y_0,y_i)\leq R_n - (i-2) \, c_0$,
for $2\leq i\leq b'$.

As $\overline L_{R^{b}_T(x_0)}$ is not totally contained in $U_n$, we may take
$z\in \overline L_{R^{b}_T(x_0)} -U_n$, so $d(y_0,z)>R_n$.
Both $z$ and $y_{b'-b}$ are on the same leaf $\overline L_{R^{b}_T(x_0)}'$.
By compactness of $T$, the diameter for a leaf $\overline L_x$ is bounded above by
some $c_1>0$, for all $x\in T$. So
 $$
 R_n-(b'-b-2) \, c_0 \geq d(y_0,y_{b'-b}) \geq d(y_0,z) - d(y_{b'-b},z) >R_n - c_1\, ,
 $$
hence
 $$
 b'-b < \frac{c_1}{c_0} +2\, .
 $$
Analogously,
 $$
 a-a' < \frac{c_1}{c_0} +2\, .
 $$

Again by compactness of $T$, the $k$-volumes of $\overline L_x$ are
uniformly bounded by some $c_2>0$, for all $x\in T$. So
 $$
 \Vol_k(\hat{U}_{a',b'}-\hat{U}_{a,b}) \leq (b'-b + a-a') c_2 <
 2\left(\frac{c_1}{c_0} +2\right)\, c_2 \,,
 $$
concluding the proof.
\end{proof}

\begin{theorem} \label{thm:11.12bis}
Let $S$ be a minimal oriented $k$-solenoid endowed with a transversal ergodic
measure $\mu\in\cM_\cL(S)$ and with a trapping region $W\subset S$.
Consider an immersion $f:S\to M$ such that $f(W)$ is contained in a
contractible ball in $M$. Then $f:S_\mu \to M$ represents its
Ruelle-Sullivan homology class $[f,S_\mu ]$, i.e. for $\mu_T$-almost all
leaves $l\subset S$,
 $$
 [{f,l}]=[f,S_\mu] \in H_k(M,\RR) \, .
 $$

If $S_\mu$ is uniquely ergodic, then $f:S_\mu\to M$ fully represents
its Ruelle-Sullivan homology class.

In particular, this homology class is independent of the metric $g$
on $M$ up to a scalar factor.
\end{theorem}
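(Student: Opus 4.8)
The plan is to follow the proof of Theorem \ref{thm:Ruelle-1-solenoid} essentially verbatim, the trapping region being exactly the structure that lets the one-dimensional Birkhoff argument work in dimension $k$. First I would use Proposition \ref{prop:trapping}: the holonomy pseudogroup of the global transversal $T$ is generated by the single map $R_T\colon T\to T$, so $(T,\mu_T,R_T)$ is a measure-preserving invertible system, ergodic by hypothesis. Next, by Theorem \ref{thm:trapping} the solenoid $S$ is controlled by $W$; together with the remarks following the definition of equivalent exhaustions, this means that every Riemann exhaustion $(U_n)$ of a leaf $l$ (for any base point, base points on $l$ giving equivalent exhaustions) is equivalent to one of the canonical exhaustions $\hat U_n=\hat U_{a_n,b_n}=\bigcup_{i=a_n}^{b_n-1}\overline L_{R_T^i(x_0)}$ of (\ref{eqn:Uab}), with $x_0\in T\cap l$, $a_n\to-\infty$, $b_n\to+\infty$, and equivalent exhaustions produce the same Riemann--Schwartzman asymptotic cycles. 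So it is enough to evaluate $\lim_n [c_n(N_n)]/\Vol_k(c_n(N_n))$ along $(\hat U_n)$, where $N_n=\hat U_n\cup\Gamma_n$.

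Then I would introduce the two Birkhoff integrands. By (\ref{eqn:Lx}) each $\overline L_x$ is a compact oriented $k$-manifold with $\partial\overline L_x=C_x\cup C_{R_T(x)}$, and $C_x\subset\pi^{-1}(\{0\})\subset W$, so $f(\partial\overline L_x)\subset f(W)\subset B$ with $B$ a contractible ball; since $B$ is contractible, $H_k(M,B)\cong H_k(M)$ and $H^k(M,B)\cong H^k(M)$. Via this isomorphism we get a bounded measurable map $\varphi_T\colon T\to H_k(M,\ZZ)$, $\varphi_T(x):=[f|_{\overline L_x}]$, the bound coming from Lemma \ref{lem:closing-k-dim} together with the uniform bound on $\Vol_k(\overline L_x)$ (compactness of $T$, as in the proof of Theorem \ref{thm:trapping}), and a bounded measurable function $V\colon T\to\RR_{>0}$, $V(x):=\Vol_k(\overline L_x)$ for the leaf metric $f^*g$; both lie in $L^1(T,\mu_T)$. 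Because consecutive pieces $\overline L_{R_T^i(x_0)}$ are glued along the $C_{R_T^i(x_0)}$, all of which are sent into $B$, additivity of relative cycles in $(M,B)$ gives
$$[c_n(N_n)]=\sum_{i=a_n}^{b_n-1}\varphi_T\big(R_T^i(x_0)\big)+O(1),\qquad \Vol_k(c_n(N_n))=\sum_{i=a_n}^{b_n-1}V\big(R_T^i(x_0)\big)+O(1),$$
the $O(1)$ errors coming from closing $\hat U_n$ by some $\Gamma_n$ mapped into $B$ with uniformly bounded $k$-volume (possible since $\partial\hat U_n$ is null-cobordant, being $\partial$ of $\hat U_n$, has uniformly bounded geometry, and $B$ is contractible; and then (\ref{eqn:condition2}) holds). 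Splitting each sum into its forward part ($0\le i<b_n$) and its backward part ($a_n\le i<0$) and applying Birkhoff's ergodic theorem to $V$ and to each coordinate of $\varphi_T$, for $R_T$ and for $R_T^{-1}$, we get for $\mu_T$-almost every $x_0$ that the sums equal $(b_n-a_n)\int_T V\,d\mu_T+o(b_n-a_n)$ and $(b_n-a_n)\int_T\varphi_T\,d\mu_T+o(b_n-a_n)$; hence the ratio converges to $\frac{\int_T\varphi_T\,d\mu_T}{\int_T V\,d\mu_T}$.

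It remains to identify this limit with $[f,S_\mu]$. Desintegrating the daval measure $\mu$ over $T$ — the $C_x$ being $(k-1)$-dimensional, hence $\mu$-null — gives $\int_T V\,d\mu_T=\mu(S)=1$ since $\mu\in\cM_\cL(S)$. For $[\omega]\in H^k(M,\RR)$ choose (by $H^k(M,B)\cong H^k(M)$, exactly as in the proof of Theorem \ref{thm:11.9}) a closed representative $\omega$ vanishing on $B$; then by Definition \ref{def:Ruelle-Sullivan} and the same desintegration,
$$\la[f,S_\mu],[\omega]\ra=\int_S f^*\omega\,d\mu=\int_T\Big(\int_{\overline L_x}f^*\omega\Big)d\mu_T(x)=\int_T\la\varphi_T(x),[\omega]\ra\,d\mu_T(x)=\la\int_T\varphi_T\,d\mu_T,[\omega]\ra\, ,$$
so $[f,S_\mu]=\int_T\varphi_T\,d\mu_T$, which is precisely the limit above. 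Therefore for $\mu_T$-almost every leaf $l\subset S$ every Riemann--Schwartzman limit of $(f,l)$ equals $[f,S_\mu]$, i.e.\ $\cC^g(f,l)=\{[f,S_\mu]\}$ and $[f,l]=[f,S_\mu]$; the exceptional set is the $R_T$-invariant $\mu_T$-null set of non-Birkhoff points of $V$ and $\varphi_T$, which corresponds to a $\mu_T$-null set of leaves. If $S_\mu$ is uniquely ergodic, then $V$ and $\varphi_T$ are continuous on $T$ (the family $\{\overline L_x\}$ varies continuously and $\varphi_T$ is locally constant), so unique ergodicity upgrades the Birkhoff convergence to convergence uniform in $x_0$, and the conclusion holds for \emph{all} leaves. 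Finally, the metric independence up to a scalar is immediate from the fact that $[f,S_\mu]$ depends only on $\mu\in\cM_\cT(S)$ (Definition \ref{def:Ruelle-Sullivan}), which is metric-independent up to scalars by the isomorphism $\overline\cM_\cT(S)\cong\cM_\cL(S)$ of Theorem \ref{thm:transverse-riemannian}, exactly as in Theorem \ref{thm:Ruelle-1-solenoid}.

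The step I expect to require the most care is the middle one: checking that the canonical exhaustions $\hat U_{a_n,b_n}$ can be closed inside $B$ by manifolds $\Gamma_n$ of uniformly bounded $k$-volume, so that condition (\ref{eqn:condition2}) is met and Lemma \ref{lem:closing-k-dim} absorbs the closing errors, and that replacing an arbitrary Riemann exhaustion by an equivalent canonical one genuinely leaves the Riemann--Schwartzman limit unchanged. Everything else is the ergodic-theoretic argument of Theorem \ref{thm:Ruelle-1-solenoid} transported to dimension $k$.
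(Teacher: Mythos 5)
Your proof is correct and follows essentially the same route as the paper's: trapping region $\Rightarrow$ holonomy generated by $R_T$ (Proposition \ref{prop:trapping}), controlled exhaustions $\hat U_{a,b}$ (Theorem \ref{thm:trapping}), Birkhoff sums of $\varphi_T$ and the volume integrand, and identification of $\int_T\varphi_T\,d\mu_T$ with $[f,S_\mu]$ via forms vanishing on $B$. Two minor points of divergence worth noting: you handle the two-sided exhaustion by applying Birkhoff to $R_T$ and $R_T^{-1}$ separately and recombining with weights $\frac{-a_n}{b_n-a_n},\frac{b_n}{b_n-a_n}$, which is a cleaner writeup of what the paper does implicitly with its saturated exhaustions $(\hat U_{n,m})_{n\le 0\le m}$; and for the uniquely ergodic case you invoke uniform Birkhoff convergence via continuity of $\varphi_T$ (locally constant) and $V$, whereas the paper's remark following the theorem sketches a different route through the controlled-growth condition and Corollary \ref{cor:volume} (unique Schwartzman limit). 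Your version is in the spirit of what the introduction announces for Theorem \ref{thm:1.4} and is fine, provided one checks that $V(x)=\Vol_k(\overline L_x)$ is genuinely continuous on $T$ — which it is here because the trapping-region structure (regular value of $\pi$, no holonomy on $\pi^{-1}((-\epsilon_0,\epsilon_0))$) makes $\overline L_x$ depend continuously on $x$, unlike the merely upper semi-continuous first-return length $l_T$ of the $1$-dimensional case. Your flagged concern about the closings $\Gamma_n$ having uniformly bounded $k$-volume inside $B$ is also what the paper treats only informally ("differs ... by a bounded quantity due to the closings"), so your caution is well placed.
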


\begin{proof}
We define a map $\varphi_T : T\to H_k(M,\ZZ)$ as follows: given
$x\in T$, consider $f(\overline L_x)$. Since $\bd f(\overline L_x)$
is contained in a contractible ball $B$ of $M$, we can close
$f(L_x)$ locally as $N_x =f(\overline L_x)\cup \G_x$ and define an
homology class $\varphi_T (x)=[N_x]\in H_k(M,\ZZ )$. This is
independent of the choice of the closing. This map $\varphi_T$ is
measurable and bounded in $H_k(M,\ZZ)$ since the $k$-volume of
$\G_x$ may be chosen uniformly bounded.  Also we can define a map
$l_T: T\to \RR_+$ by $l_T(x)=\Vol_k(\overline L_x)$. It is also a
measurable and bounded map.

We have seen that every Riemann exhaustion $(U_n)$ is equivalent to
an exhaustion $(\hat U_n)$ with $\partial \hat U_n \subset W$. Note
also that we can saturate the exhaustion $(\hat U_n)$ into $(\hat
U_{n,m})_{n\leq 0\leq m}$, with $\hat U_{n,m}$ defined in
(\ref{eqn:Uab}), where $\partial \hat U_{n,m} = C_{R_T^n(x_0)}\cup
C_{R_T^m(x_0)}$, and $x_0\in T$ is a base point. Since $f(W)$ is
contained in a contractible ball $B$ of $M$, we can always close
$f(\hat U_{n,m})$, with a closing inside $B$, to get $N_{n,m}$
defining an homology class $[N_{n,m}]\in H_k(M,\ZZ )$. Moreover we
have
$$
[N_{n,m}]=\sum_{i=n}^{m-1} \varphi_T(R_T^i(x_0)) \, .
$$
Thus by ergodicity of $\mu$ and Birkhoff's ergodic theorem, we have that for
$\mu_T$-almost all $x_0\in T$,
$$
\frac{1}{m-n} [N_{n,m}]\to \int_T \varphi_T \ d\mu_T \, .
$$

Also
$$
\Vol_k (\hat U_{n,m})=\sum_{i=n}^{m-1} l_T(R_T^i(x_0)) \, ,
$$
where $\Vol_k(N_{n,m})$ differs from $\Vol_k (\hat U_{n,m})$ by a
bounded quantity due to the closings.
By Birkhoff's ergodic theorem, for $\mu_T$-almost all $x_0\in
T$,
$$
\frac{1}{m-n} \Vol_k f(\hat U_{n,m}) \to \int_T l_T \ d\mu_T = \mu(S)=1\, .
$$
Thus we conclude that for $\mu_T$-almost $x_0\in T$,
$$
\frac{1}{\Vol_k (N_{n,m})} [N_{n,m}] \to \int_T \varphi_T \ d\mu_T \, ,
$$
It is easy to see as in theorem \ref{thm:Ruelle-1-solenoid}
that $\int_T \varphi_T \ d\mu_T$ is the Rulle-Sullivan homology class
$[f,S_\mu]$.
\end{proof}

Actually, when $f:S\to M$ is an immersed oriented uniquely ergodic $k$-solenoid
with a trapping region which is mapped to a contractible ball in $M$,
we may prove that $f:S_\mu\to M$ fully represents the Ruelle-Sullivan homology class $[f,S_\mu]$
by checking that the exhaustion $\hat{U}_n$ satisfies the
controlled growth condition (see definition \ref{def:controlled-growth})
and using corollary \ref{cor:volume} which guarantees that
the normalized measures $\mu_n$ supported on $\hat{U}_n$
converge to the unique Schwartzman limit $\mu$.

\setcounter{section}{0}

\renewcommand{\thesection}{{Appendix}}
\section{Norm on the homology}\renewcommand{\thesection}{\Alph{section}}\label{sec:appendix1-norm}

Let $M$ be a compact $C^\infty$ Riemannian manifold. For each
$a\in H_1(M,\ZZ)$ we define
 $$
 l(a)=\inf_{[\g ]=a} l(\g ) \, ,
 $$
where $\g$ runs over all closed loops in $M$ with homology class
$a$ and $l(\g)$ is the length of $\g$,
 $$
 l(\g) =\int_{\g} \ ds_g \, .
 $$

By application of Ascoli-Arzela it is classical to get

\begin{proposition} \label{prop:A.1}
For each $a\in H_1(M,\ZZ)$ there exists a minimizing geodesic loop
$\g$ with $[\g]=a$ such that
 $$
  l(\g)=l(a) \, .
 $$
\end{proposition}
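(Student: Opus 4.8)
The plan is to run the direct method of the calculus of variations and then a standard first-variation (regularity) argument.

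First I would fix a minimizing sequence. Every class $a\in H_1(M,\ZZ)$ is represented by some loop (a finite collection of loops representing $a$ can be concatenated, via connecting arcs traversed back and forth, into a single loop), so $l(a)<\infty$. Choose loops $\g_n$ with $[\g_n]=a$ and $l(\g_n)\to l(a)$; we may assume $l(\g_n)\le l(a)+1$. Reparametrize each $\g_n$ proportionally to arc length on $[0,1]$, so that $\g_n:[0,1]\to M$ is Lipschitz, with Lipschitz constant $l(\g_n)\le l(a)+1$ for the Riemannian distance $d$ on $M$. Since $M$ is compact, the family $(\g_n)$ is uniformly bounded and equicontinuous, so by Ascoli--Arzel\`a a subsequence, still written $(\g_n)$, converges uniformly to a loop $\g:[0,1]\to M$; the limit is again Lipschitz with constant $\le l(a)+1$, hence rectifiable.

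Next I would check that $\g$ realizes the infimum. Lower semicontinuity of length under uniform convergence gives $l(\g)\le\liminf_n l(\g_n)=l(a)$. Conversely, compactness of $M$ provides a positive injectivity radius $\rho>0$; for $n$ large one has $\sup_t d(\g_n(t),\g(t))<\rho$, and then $H(t,s)=\exp_{\g_n(t)}\!\big(s\,\exp_{\g_n(t)}^{-1}\g(t)\big)$ is a free homotopy from $\g_n$ to $\g$, so $[\g]=[\g_n]=a$ in $H_1(M,\ZZ)$; hence $l(\g)\ge l(a)$. Thus $l(\g)=l(a)$.

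Finally I would upgrade $\g$ to a geodesic loop. Any sufficiently short subarc of $\g$ must be a shortest path between its endpoints: otherwise, replacing it by a shorter curve inside a small ball changes neither the homology class (the modification is supported in a contractible set) nor closes up the loop, but strictly decreases the length, contradicting minimality. A curve all of whose short subarcs minimize length is, in its arc-length parametrization, a geodesic; in particular it is smooth and corner-free, and since we minimize over all loops (so over repositionings of the basepoint as well) it closes up smoothly, i.e. $\g$ is a closed geodesic with $l(\g)=l(a)$. The only genuinely delicate point is the homological invariance under uniform limits, which is handled by the injectivity-radius argument above; the remaining steps are routine, the geodesic regularity being standard Riemannian geometry.
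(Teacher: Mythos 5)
Your proof is correct, and it is exactly the argument the paper has in mind: the paper gives no proof beyond remarking that the result follows ``by application of Ascoli--Arzel\`a,'' and your write-up supplies the standard details (minimizing sequence, arc-length reparametrization, Ascoli--Arzel\`a compactness, lower semicontinuity, injectivity-radius homotopy to preserve the class, and first-variation regularity). No gap; same approach, fully spelled out.
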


Note that the minimizing property implies the geodesic character
of the loop. We also have

\begin{proposition} \label{prop:A.2}
There exists a universal constant $C_0=C_0(M)>0$ only depending on
$M$, such that for $a,b \in H_1(M,\ZZ)$ and $n\in \ZZ$, we have
 $$
 l(n\cdot a)\leq |n| \ l(a) \, ,
 $$
and
 $$
 l(a+b)\leq l(a)+l(b)+C_0 \, .
 $$
(We can take for $C_0$ twice the diameter of $M$.)
\end{proposition}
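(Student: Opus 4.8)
The plan is, for each of the two inequalities, to exhibit an explicit rectifiable loop in the prescribed homology class whose length is easy to bound, built out of the minimizing geodesic loops furnished by Proposition~\ref{prop:A.1}.

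For $l(n\cdot a)\leq |n|\,l(a)$, let $\gamma$ be a minimizing loop with $[\gamma]=a$ and $l(\gamma)=l(a)$. If $n>0$, traversing $\gamma$ exactly $n$ times (based at the same point) produces a rectifiable loop representing $n\cdot[\gamma]=n\cdot a$ of length $n\,l(a)$, so $l(n\cdot a)\leq n\,l(a)$; if $n<0$ one uses instead the reversed loop $\bar\gamma$ (which represents $-a$ and has the same length) traversed $|n|$ times; and for $n=0$ the constant loop gives $l(0)=0$. Hence $l(n\cdot a)\leq |n|\,l(a)$ in every case.

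For $l(a+b)\leq l(a)+l(b)+C_0$, choose minimizing loops $\gamma_a,\gamma_b$ with $[\gamma_a]=a$, $[\gamma_b]=b$ and $l(\gamma_a)=l(a)$, $l(\gamma_b)=l(b)$, based at points $p$ and $q$ respectively. Since $M$ is compact and connected, there is a geodesic arc $\delta$ from $p$ to $q$ with $l(\delta)\leq\diam(M)$. Form the loop based at $p$ given by $\gamma_a$, then $\delta$, then $\gamma_b$, then the reversed arc $\bar\delta$. Its class in $H_1(M,\ZZ)$ is unaffected by the insertion of the connecting arc and its reverse (equivalently, the homology class of a loop does not depend on the base point and is invariant under conjugation), so it equals $a+b$; and its length is $l(a)+l(b)+2\,l(\delta)\leq l(a)+l(b)+2\,\diam(M)$. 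Thus the inequality holds with $C_0=2\,\diam(M)$.

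I expect no genuine obstacle here. The only points deserving a remark are that the concatenated paths are honest rectifiable loops (so that $l$ evaluated on their homology classes is at most their total length), that inserting $\delta$ followed by $\bar\delta$ leaves the homology class unchanged (the map from based loops to $H_1(M,\ZZ)$ factors through free homotopy), and that $C_0$ is finite precisely because $M$ is compact. The one substantive ingredient — the existence of a length-minimizing geodesic loop in a prescribed class — is already supplied by Proposition~\ref{prop:A.1} via Ascoli-Arzela.
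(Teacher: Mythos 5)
Your proof is correct and follows essentially the same route as the paper: traverse a representing loop $|n|$ times for the first bound, and splice two representing loops along a minimizing arc (traversed forward and back) for the second, with $C_0 = 2\,\diam(M)$. The only cosmetic difference is that you invoke the minimizers from Proposition~\ref{prop:A.1} directly, whereas the paper argues with arbitrary representatives and takes an infimum at the end; the two presentations are equivalent.
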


\begin{proof}
Given a loop $\g$, the loop $n\g$ obtained from $\g$ running
through it $n$ times (in the direction compatible the sign of $n$)
satisfies
 $$
 [n\g]=n \ [\g],
 $$
and
 $$
 l(n\g)=|n|\, l(\g)\, .
 $$
Therefore
 $$
 l(n\cdot a)\leq l(n\g)=|n|\, l(\g) \, ,
 $$
and we get the first inequality taking the infimum over $\g$.

Let $C_0$ be twice the diameter of $M$. Any two points of $M$ can
be joined by an arc of length smaller than or equal to $C_0/2$. Given
two loops $\a$ and $\b$ with $[\a ]=a$ and $[\b ]=b$, we can
construct a loop $\g$ with $[\g ]=a+b$ by picking a point in $\a$
and another point in $\b$ and joining them by a minimizing arc
which pastes together $\a$ and $\b$ running through it back and
forth. This new loop satisfies
 $$
 l(\g )=l(\a)+l(\b )+C_0 \, ,
 $$
therefore
 $$
 l(a+b)\leq l(\a)+l(\b )+C_0 \, .
 $$
and the second inequality follows.
\end{proof}

\begin{remark} \label{rem:A.3}
It is not true that $l(n\cdot a)=n\, l(\g )$ if $l(a)=l(\g)$. To
see this take a surface $M$ of genus $g\geq 2$ and two elements
$e_1,e_2\in H_1(M,\ZZ)$ such that
 $$
 l(e_1)+l(e_2)<l(e_1+e_2)\, .
 $$
(For instance we can take $M$ to be the connected sum of a large
sphere with two small $2$-tori at antipodal points, and let $e_1$,
$e_2$ be simple closed curves, non-trivial in homology, inside
each of the two tori.) Let $a=e_1+e_2$. Then
 $$
 l(n \cdot a)=l(n \cdot (e_1+e_2))\leq n\, l(e_1)+n\, l(e_2)+ C_0 \, ,
 $$
we get for $n$ large
 $$
 l(n\cdot a)<n\, l(a) \, .
 $$
\end{remark}

\begin{theorem}\textbf{\em (Norm in homology)} \label{thm:A.4}
Let $a\in H_1(M,\ZZ)$. The limit
 $$
 ||a||=\lim_{n\to +\infty } \frac{l(n\cdot a) }{n} \ \, ,
 $$
exists and is finite. It satisfies the properties
\begin{enumerate}
 \item[(i)] For $a\in H_1(M,\ZZ)$, we have $||a||=0$ if and only if
 $a$ is torsion.
 \item[(ii)] For $a\in H_1(M,\ZZ)$ and $n\in \ZZ$, we have
 $||n\cdot a||=|n|\, ||a|| $ .
 \item[(iii)] For $a,b\in H_1(M,\ZZ)$, we have
 $$
 ||a+b||\leq ||a||+||b|| \, .
 $$
\item[(iv)] $||a||\leq l(a)$.
\end{enumerate}
\end{theorem}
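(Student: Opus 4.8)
The plan is to establish the existence of the limit by a subadditivity argument, then verify the four properties. First I would show that the sequence $a_n = l(n\cdot a)$ is essentially subadditive: by Proposition \ref{prop:A.2}, $l((m+n)\cdot a)\leq l(m\cdot a)+l(n\cdot a)+C_0$, so setting $b_n = l(n\cdot a)+C_0$ gives a genuinely subadditive sequence $b_{m+n}\leq b_m+b_n$. By Fekete's subadditivity lemma, $b_n/n$ converges to $\inf_n b_n/n$, and since $C_0/n\to 0$ the same limit is the limit of $l(n\cdot a)/n$. Finiteness is immediate since $0\leq l(n\cdot a)/n \leq l(a)$ by the first inequality of Proposition \ref{prop:A.2} (this also gives property (iv) directly, taking the limit). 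I expect this step to be routine; the only mild subtlety is making sure the shift by $C_0$ is handled cleanly, which Fekete's lemma accommodates without trouble.

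Next I would verify properties (ii) and (iii). For (iii), apply Proposition \ref{prop:A.2} to $n\cdot a$ and $n\cdot b$: $l(n\cdot(a+b))\leq l(n\cdot a)+l(n\cdot b)+C_0$, divide by $n$ and pass to the limit, the $C_0/n$ term vanishing. For (ii), first note $\|n\cdot a\| = \lim_{m} l(mn\cdot a)/m = n\lim_m l(mn\cdot a)/(mn) = n\|a\|$ for $n\geq 0$, using that $(l(mn\cdot a)/(mn))_m$ is a subsequence of the sequence defining $\|a\|$; for negative $n$ one uses that a loop and its orientation reversal have the same length, so $l((-k)\cdot a) = l(k\cdot a)$, giving $\|-a\|=\|a\|$ and hence $\|n\cdot a\| = |n|\,\|a\|$ in general.

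The main obstacle is property (i): $\|a\|=0$ iff $a$ is torsion. The "if" direction is easier — if $a$ is torsion with $N\cdot a = 0$, then $l(kN\cdot a) = l(0) = 0$ for all $k$, and since $\|a\| = \lim_m l(mN\cdot a)/(mN) = 0$ along that subsequence, we get $\|a\|=0$. The "only if" direction is the delicate part: I would argue contrapositively, supposing $a$ is not torsion, so its image in $H_1(M,\RR)$ is nonzero. Then there is a cohomology class $[\omega]\in H^1(M,\RR)$ (represented by a closed $1$-form $\omega$) with $\langle [\omega], a\rangle = c \neq 0$. For any loop $\gamma$ with $[\gamma] = n\cdot a$ we have $|nc| = |\langle[\omega], n\cdot a\rangle| = |\int_\gamma \omega| \leq \|\omega\|_{C^0}\, l(\gamma)$, so $l(n\cdot a)\geq |n||c|/\|\omega\|_{C^0}$, whence $\|a\|\geq |c|/\|\omega\|_{C^0} > 0$. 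This uses only the pairing between De Rham cohomology and singular homology and the elementary estimate $|\int_\gamma\omega|\leq l(\gamma)\|\omega\|_{C^0}$ already exploited in the proof of Lemma \ref{lem:9.1}. Assembling these pieces completes the proof.
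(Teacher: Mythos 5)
Your proposal is correct and follows essentially the same route as the paper: Fekete subadditivity applied to $l(n\cdot a)+C_0$ for existence and (iv), the subsequence trick for (ii), dividing the subadditivity inequality by $n$ for (iii), and pairing against a cohomology class to bound $l(n\cdot a)$ from below for the nontrivial direction of (i). The only cosmetic difference is that the paper realizes the cohomology class as $d\phi$ for a map $\phi:M\to S^1$ and bounds via $|d\phi|$, while you pair directly with a closed $1$-form $\omega$ using $|\int_\gamma\omega|\leq\|\omega\|_{C^0}\,l(\gamma)$; these are the same estimate. Your write-up is if anything a bit more careful (naming Fekete, handling negative $n$ explicitly via orientation reversal).
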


\begin{proof}
Let $u_n=l(n\cdot a)+C_0$. By the properties proved before, the
sequence $(u_n)$ is sub-additive
 $$
 u_{n+m}\leq u_n+u_m \, ,
 $$
therefore
 $$
 \limsup_{n\to +\infty } \frac{u_n}{n}=\liminf_{n\to +\infty }
 \frac{u_n}{n} \, .
 $$
Moreover, we have also
 $$
 \frac{u_n}{n} \leq l(a) <+\infty \, ,
 $$
thus the limit exists and is finite. Property (iv) holds.

Property (ii) follows from
 $$
 ||n\cdot a||= \lim_{m\to \infty} \frac{l(mn\cdot a)}{m} =
 |n|\, \lim_{m\to \infty} \frac{l(m|n|\cdot a)}{m|n|} = |n|\
 ||a||\, .
 $$
Property (iii) follows from
 $$
 l(n\cdot (a+b))\leq l(n\cdot a)+l(n\cdot b) +C_0\leq n\, l(a)+n\, l(b) +C_0 \, ,
 $$
dividing by $n$ and passing to the limit.

Let us check property (i). If $a$ is torsion then $n\cdot a=0$, so
$||a||=\frac1n ||n\cdot a||=0$. If $a$ is not torsion, then there
exists a smooth map $\phi:M\to S^1$ which corresponds to an
element $[\phi]\in H^1(M,\ZZ)$ with $m=\la [\phi],a\ra > 0$. Then
for any loop $\gamma:[0,1]\to M$ representing $n\cdot a$, $n>0$,
we take $\phi\circ\gamma$ and lift it to a map
$\tilde{\g}:[0,1]\to \RR$. Thus
 $$
 \tilde{\g}(1)-\tilde{\g}(0)=\la [\phi],n\cdot a\ra = m\, n\, .
 $$
Now let $C$ be an upper bound for $|d\phi|$. Then
 $$
 m\, n=|\tilde{\g}(1)-\tilde{\g}(0)|=l(\phi\circ \gamma)\leq C\,
 l(\gamma)\, ,
 $$
so $l(\g)\geq m\, n/C$, hence $l(n\cdot a)\geq m\, n/C$ and
$||a||\geq m/C$.
\end{proof}

Now we can define a norm in $H_1(M,\QQ)=\QQ \otimes H_1(M,\ZZ)$ by
 $$
 ||\lambda \otimes a ||=|\lambda|\cdot ||a|| \, ,
 $$
and extend it by continuity to $H_1(M,\RR)=\RR\otimes H_1(M,\ZZ)$.

\end{document}